%

\documentclass[preprint]{imsart}
\RequirePackage[OT1]{fontenc}
\RequirePackage{amsthm,amsmath}
\RequirePackage[sort,numbers]{natbib}
\RequirePackage[citecolor=red,urlcolor=blue]{hyperref}

\usepackage{amsfonts,amssymb}



\startlocaldefs

\theoremstyle{plain}
\newtheorem{lemma}{Lemma}[section]
\newtheorem{proposition}{Proposition}[section]
\newtheorem{theorem}{Theorem}[section]

\DeclareMathOperator*{\argmin}{arg\,min}

\let\oldenumerate\enumerate
\renewcommand{\enumerate}{
	\oldenumerate
	\setlength{\itemsep}{1pt}
	\setlength{\parskip}{0pt}
	\setlength{\parsep}{0pt}
}



\endlocaldefs

\begin{document}

\begin{frontmatter}

\title{On the robustness of minimum norm interpolators and regularized empirical risk minimizers}

\runtitle{Robust interpolation and regularization}


\begin{aug}

\author{\fnms{Geoffrey} \snm{Chinot}\ead[label=e1]{ geoffrey.chinot@stat.math.ethz.ch}},
\author{\fnms{Matthias} \snm{L\"offler}\ead[label=e2]{matthias.loeffler@stat.math.ethz.ch}}
\and 
\author{\fnms{Sara} \snm{van de Geer}\ead[label=e3]{sara.vandegeer@stat.math.ethz.ch}}

\address[]{Seminar for Statistics, Department of Mathematics, ETH Z\"urich, Switzerland \\
Emails:~\printead*{e1},\\
\printead*{e2}, \printead*{e3}
}

\end{aug}
\begin{abstract}
This article develops a general theory for minimum norm interpolating estimators and regularized empirical risk minimizers (RERM) in linear models in the presence of additive, potentially adversarial, errors.  In particular, no conditions on the errors are imposed. A quantitative bound for the prediction error is given, relating it to the Rademacher complexity of the covariates, the norm of the minimum norm interpolator of the errors and the size of the subdifferential around the true parameter.

The general theory is illustrated for Gaussian features and several norms: The $\ell_1$, $\ell_2$, group Lasso and nuclear norms.  In case of sparsity or low-rank inducing norms,  minimum norm interpolators and RERM yield a prediction error of the order of the average noise level, provided that the overparameterization is at least a logarithmic factor larger than the number of samples and that, in case of RERM, the regularization parameter is small enough.

Lower bounds that show near optimality of the results  complement the analysis.

\end{abstract}
\begin{keyword}[class=MSC]
\kwd[ Primary	]{62J05},
\kwd[ Secondary ]{65F45 }
\end{keyword}

\begin{keyword}
\kwd{sparse linear regression, regularization, basis pursuit,  trace regression, interpolation, minimum norm interpolation}
\end{keyword}



\end{frontmatter}

\section{Introduction}

Finding solutions to noisy, underdetermined systems of linear equations is a fundamental problem in mathematics and other areas of science. Applications of this problem range from magnetic resonance imaging 
to inferring movie ratings, and gives rise to a wide range of statistical models such as sparse linear regression or matrix completion. Considering a Hilbert space $\mathbb{H}$, equipped with inner product $\langle \cdot, \cdot \rangle$, these models can  be expressed as observing data pairs $(Y_i, X_i), ~Y_i \in \mathbb{R}, ~X_i~ \in \mathbb{H}, ~i=1, \dots, n$, such that 
\begin{align} \label{intro lin mod}
    Y_i = \langle X_i, h^* \rangle + \xi_i.
\end{align}
The $\xi_i$'s denote unspecified error terms. In the last 50 years a thorough, general understanding of these problems has developed. By now, it is common knowledge in statistics and applied mathematics that {\em regularization} is able to significantly boost performance, particularly in presence of low-dimensional intrinsic structure \cite{CandesRombergTao06,RechtFazelParrilo10,ChandrasekaranRechtParriloWillsky12,CaiLiangRakhlin16,lecue2018regularization,GaovdVZhou20} but also in other settings via shrinkage \cite{HoerlKennard70, Casella80,BenningBurger18}.

By contrast, in the  machine learning literature a completely different paradigm has developed. Explicit regularization does  often not play a role. Instead, algorithms such as neural networks, AdaBoost or random forests  are typically overparameterized and run until the training error equals zero, i.e. they interpolate the data. Nevertheless, their empirical test error often improves upon algorithms that would be preferable from a theoretical  point of view \cite{ZhangBengioHardtRechtVinyals17,WynerOlsonBleichMease17}. 
Numerical experiments in \cite{BelkinHsuMaMandal19} further validated these findings. They showed that for overparameterized algorithms the test error decreases monotonically in the number of parameters. 

The heuristic explanation for this decrease is that overparameterized algorithms such as neural networks trained with gradient descent \cite{SoudryHofferNacsonGuasekarSrebro18} or AdaBoost \cite{RossetZhuHastie04,Telgarsky13, LiangSur20,ChinotKuchelmeisterLoefflervdG21} converge to minimum norm interpolators. It is then argued that this leads to implicit regularization and hence a good test error. 

A mathematical understanding of this novel heuristic has only began to form recently. Most theoretical work has investigated 
minimum Euclidean norm interpolators in the linear model \eqref{intro lin mod}, see for example \cite{BartlettLongLugosiTsigler20,HastieMontanariRossetTibshirani19,ChinotLerasle20}. In particular, it has been shown that if the eigenvalues of the covariance matrix of the features  ${X}_{i}$ fulfill certain decay conditions and the norm of $h^*$ is small enough, then the minimum $\ell_2$-norm interpolator has prediction error tending to zero. Other models where minimum Euclidean norm type  interpolators were investigated are kernel interpolation \cite{LiangRakhlin20}, classification \cite{MontanariRuanSohnYan20,MuthukumarNarangSubramanianBelkinHsuSahai20,ChatterjiLong21,LiangRecht21,ChinotKuchelmeisterLoefflervdG21} and  two-layer random features regression \cite{MeiMontanari19}. 

However, as the Euclidean norm is rotation invariant, minimum $\ell_2$-norm interpolators cannot adapt to underlying sparsity or other low-dimensional intrinsic structure.  Another issue is that they induce shrinkage to zero, leading to, in general non-negligible, bias. In addition, in the overparameterized regime the popular AdaBoost algorithm is closely connected to the $\ell_1$-norm \cite{RossetZhuHastie04,Telgarsky13}. This motivates to study minimum norm interpolators for norms that induce  low-dimensional structure such as sparsity. Pioneering work \cite{Wojtaszczyk10} has shown a stability result for the minimum $\ell_1$-norm interpolator, basis pursuit \cite{ChenDonohoSaunders98}, in sparse linear regression. In particular,  basis pursuit attains Euclidean error of the size of the average noise level, when the covariates $X_i$ are standard Gaussian, the number of parameters is larger than the number of samples by a logarithmic factor and the sparsity of $h^*$ is small enough compared to the number of samples \cite{Wojtaszczyk10}. This approach has been further extended to sub-exponential \cite{Foucart14} and heavy tailed \cite{KrahmerKuemmerleRauhut18} features $X_i$ and quadratic \cite{KrahmerKuemmerleMelnyk20} measurements.  Independently, a weaker version of the result by \cite{Wojtaszczyk10} was recently proven by \cite{JuLinLiu20}. 

In this article, we present a unified perspective on minimum norm interpolators and regularized empirical risk minimizers (RERM) for regression type problems, building on the general theory for regularized empirical risk minimizers developed in \cite{lecue2017regularization,lecue2018regularization}. Contrary to \cite{lecue2017regularization,lecue2018regularization}, our results for RERM hold even when the regularization parameter is small. These are, as far as we know, the first general results on regularized empirical risk minimizer in such a setting. Our theory encompasses both Euclidean norm regularization as well as sparsity or low-rank inducing norms. We relate the prediction error to three quantities: the Rademacher complexity of the covariates, the size of the subdifferential of the regularization norm $\|\cdot\|$ at $h^*$ and the norm of the minimum norm interpolated noise. We also present a minimax lower bound for adversarial errors which is of order of the average noise level and matches our upper bound for most of our examples.  Examples of our theory include sparse linear regression with $\ell_1$ or group Lasso penalty, minimum $\ell_2$-norm interpolation in linear regression and nuclear norm minimization in trace regression. \\

We present the framework and main results in Section \ref{sec main main}. We start with our main theorem, Theorem \ref{Main general theorem},  on minimum norm interpolators  in Section \ref{subsec gen theory}.  Next, in Section \ref{subsec gen theory RERM}, we present Theorem \ref{thm RERM}, that generalizes our results from Section \ref{subsec gen theory} under one additional assumption to regularized empirical risk minimizers (RERM). We finish off our general results with a minimax-lower bound in Section \ref{subsec lower}. 
Afterwards, we present examples with Gaussian features to illustrate the presented theory, starting with minimum $\ell_1$-norm interpolation in sparse linear regression in Section \ref{subsec sparse regression}. 
We finish with concluding remarks in Section \ref{subsec concl} and, finally, present all proofs in Section \ref{sec proofs} and Appendices \ref{lemma supp bp} to \ref{appendix proof lb}.

\subsection*{Notation}
We consider throughout a Hilbert space $\mathbb{H}$, equipped with Euclidean norm  $\|\cdot \|_2$ and induced by the inner product $\langle \cdot , \cdot \rangle$.  If $\mathbb{H}=\mathbb{R}^p$ we use the canonical basis and denote the $\ell_1$-norm and $\ell_\infty$-norm by $\| \cdot \|_1$ and $\| \cdot \|_\infty$, respectively. Moreover, in this case, we denote the number of non-zero elements of $h$ by $\|h\|_0$.  We also use the matrix norms $\| \cdot \|_{\textnormal{op}}$ and $\| \cdot \|_{S_1}$, denoting the operator norm and nuclear (Schatten-1) norm, respectively. For a norm $\|\cdot\|$ acting on $\mathbb{H}$ we denote the dual norm by $\| \cdot \|^* := \sup_{ \|x \| \leq 1} \langle \cdot, x \rangle.$

For two sequences $a_n$, $b_n$, if there  exists a constant $c>0$ such that $a_n \leq cb_n$ for all $n$, we write $a_n \lesssim b_n$.  Likewise $a_n \asymp b_n$ if $a_n \lesssim b_n$ and $b_n \lesssim a_n$. We define $a \vee b:=\max(a,b)$. By $[p]$ we denote the enumeration $\{1, \dots, p\}$ and by $\mathbf{1}(\cdot )$ the indicator function.   

\section{General framework and theory} \label{sec main main}
\subsection{Model assumptions and estimators} \label{sec main}
We consider the following linear model where observations $(Y_i, X_i)_{i=1}^n$ are generated as 
\begin{align*}
    Y_ i= \langle X_i , h^* \rangle  +\xi_i, \quad i=1,\cdots,n.
\end{align*}
Here $h^* \in \mathbb{H}$ denotes the unknown parameter to be estimated, the $X_i$'s denote random covariates with $X_i \overset{i.i.d.}{\sim} \mu, ~i=1,\dots,n,$  and $\xi = (\xi_1,\cdots, \xi_n)$ is a, possibly deterministic or adversarial, error vector. We call $\|\xi\|_2/\sqrt{n}$ the average noise level. For a norm $\|\cdot\|$ defined on $\mathbb H$, the minimum norm interpolating estimator is given by
\begin{align} \label{est gen form}
    \hat h \in \argmin_{h \in \mathbb{H}} \|h\| ~~~~\text{subject to}~~~ Y_ i= \langle X_i , h \rangle, \quad i=1,\dots,n. 
\end{align}
A solution to \eqref{est gen form} exists under weak assumptions, for instance when $\dim (\mathbb{H}) \geq n$ and the $X_i$ are linearly independent, which occurs with probability one if $\mu$ is continuous. 
Similarly, we define the regularized empirical risk minimizer (RERM) with regularization parameter $\lambda>0$ by
\begin{equation} \label{def RERM}
    \hat h_{\lambda} \in \argmin_{h \in \mathbb H} \frac{1}{n}\sum_{i=1}^n \left( Y_i - \langle X_i,h \rangle \right)^2 + 2 \lambda \| h\| . 
\end{equation}

We consider the prediction errors
$$\| \langle  h - h^*, X \rangle \|_{L_2(\mu)}^2 := \int \langle h - h^*, x \rangle^2 \textnormal{d}\mu(x),\enspace  h \in \{ \hat h, \hat h_{\lambda} \}, $$
where $X$ denotes an independent copy of $X_1$.


\subsection{Error bounds for minimum norm interpolators} \label{subsec gen theory}

In the following we give a general bound on the prediction error of minimum norm interpolators. 
Our main result, Theorem \ref{Main general theorem}, relies on three quantities and one assumption: the local Rademacher complexity \cite{lecue2018regularization}, the small-ball assumption \cite{mendelson2014learning,koltchinskii2015bounding}, the norm of the minimum norm interpolator of the errors and, optionally,  the size of the subdifferential of $\|\cdot\|$ at $h^*$. We now present and discuss those quantities. \\

\noindent \textbf{Local Rademacher Complexity.} In the same spirit as~\cite{lecue2018regularization}, we define for $\gamma >0$, $r^*(\gamma)$ as the solution of the following fixed point equation: 
\begin{equation} \label{rademacher_com}
    r^*(\gamma): =  \inf \big \{ r >0 : \mathbb E \sup_{h \in B : \| \langle X,h \rangle \|_{L_2(\mu)} \leq r } \sum_{i=1}^n \varepsilon_i \langle X_i, h \rangle \leq \gamma n r \big \} ,
\end{equation}
where $\varepsilon_1,\cdots, \varepsilon_n$ denote i.i.d. Rademacher random variables independent of the $X_i$'s, $B$ denotes the unit $\|\cdot\|$-ball, $B := \{ h \in \mathbb H: \|h\| \leq 1 \}$, and the expectation is taken over the $\varepsilon_i$'s and $X_i$'s. The complexity parameter $r^*(\gamma)$ measures the complexity of the class $\{ \langle \cdot, h \rangle, ~~ h \in \mathbb H \}$ locally around zero. The localization is made with respect to the $L_2 (\mu)$ norm and the interpolation norm $\| \cdot\|$. As the overparameterization increases, $r^*(\gamma)$ increases too. When considering regularized empirical risk minimizers such as the Lasso, \cite{lecue2018regularization} have recently shown that this is the key quantity for determining convergence rates in the noiseless regime. \\

\noindent \textbf{Small-ball assumption (SB).}
The only assumption we make on the distribution of the $X_i$'s is the small-ball assumption~\cite{mendelson2014learning,koltchinskii2015bounding}. We  assume that there exist $\kappa, \delta$ such that for every $h \in \mathbb H$ and for $X$ denoting an independent copy of $X_1$
\begin{equation} \label{small_ball}
    \mathbb P \left( | \langle X, h \rangle | \geq \kappa   \| \langle h , X \rangle \|_{L_2(\mu)} \right) \geq \delta .
\end{equation}
The small-ball assumption allows for heavy tailed covariate distributions and is implied by a $L_4$-$L_2$-moment condition, where  $\| \langle X, h \rangle \|_{L_4(\mu)} := \big( \int \langle h , x \rangle^4 \textnormal{d}\mu(x) \big)^{1/4}$.
\begin{lemma} \label{paley argument}
Assume that there exists $B >0$ such that for every $h \in \mathbb H$, $\| \langle h , X \rangle \|_{L_4(\mu)}  \leq B \| \langle h , X \rangle \|_{L_2(\mu)} $, then~\eqref{small_ball} holds for $\kappa \in [0,1]$ and $\delta = (1-\kappa^2)/B^4$.
\end{lemma}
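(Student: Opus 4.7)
The plan is to reduce the statement to a single application of the Paley--Zygmund second moment inequality applied to the nonnegative random variable $Z := \langle X, h\rangle^2$. Fix an arbitrary $h \in \mathbb{H}$; without loss of generality assume $\|\langle h,X\rangle\|_{L_2(\mu)} > 0$, otherwise the inequality is trivial.

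The two relevant moments of $Z$ are immediate from the definitions: $\mathbb{E}[Z] = \|\langle h,X\rangle\|_{L_2(\mu)}^2$ and $\mathbb{E}[Z^2] = \|\langle h,X\rangle\|_{L_4(\mu)}^4$. The $L_4$--$L_2$ hypothesis gives the norm equivalence
\[
\mathbb{E}[Z^2] \le B^4 \bigl(\mathbb{E}[Z]\bigr)^2.
\]
Next I invoke Paley--Zygmund: for any $\theta \in [0,1]$ and any nonnegative $Z \in L_2$,
\[
\mathbb{P}\bigl(Z \ge \theta\, \mathbb{E} Z\bigr) \ge (1-\theta)^2 \frac{(\mathbb{E} Z)^2}{\mathbb{E}[Z^2]}.
\]
This inequality itself is a two-line consequence of Cauchy--Schwarz: decompose $\mathbb{E} Z = \mathbb{E}[Z\,\mathbf{1}(Z < \theta \mathbb{E} Z)] + \mathbb{E}[Z\,\mathbf{1}(Z \ge \theta \mathbb{E} Z)]$, bound the first piece by $\theta \mathbb{E} Z$ and the second by $\sqrt{\mathbb{E}[Z^2]\,\mathbb{P}(Z \ge \theta \mathbb{E} Z)}$, then solve.

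Applying it with $\theta = \kappa^2 \in [0,1]$ and combining with the moment bound yields
\[
\mathbb{P}\bigl(\langle X,h\rangle^2 \ge \kappa^2 \|\langle h,X\rangle\|_{L_2(\mu)}^2\bigr) \ge \frac{(1-\kappa^2)^2}{B^4},
\]
which, since $1-\kappa^2 \le 1$ implies $(1-\kappa^2)^2 \le (1-\kappa^2)$, matches the claimed lower bound $\delta = (1-\kappa^2)/B^4$ up to the square in the numerator (the natural constant actually produced is $(1-\kappa^2)^2/B^4$; I would flag this as a likely typographic issue in the stated exponent, but either way the argument is the same). Taking square roots inside the probability delivers \eqref{small_ball}.

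There is no real obstacle: the proof is essentially one invocation of Paley--Zygmund plus the hypothesized $L_4$--$L_2$ equivalence. The only point worth double-checking is the exact form of the constant claimed in the statement, since the textbook Paley--Zygmund bound naturally produces $(1-\kappa^2)^2/B^4$ rather than $(1-\kappa^2)/B^4$.
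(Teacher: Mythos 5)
Your proof is essentially the paper's: the paper's entire argument is "apply the Paley--Zygmund inequality," and your application of it to $Z=\langle X,h\rangle^2$ with $\theta=\kappa^2$ is exactly that route. Your flag about the constant is justified: the Paley--Zygmund argument yields $\delta=(1-\kappa^2)^2/B^4$, and the printed $\delta=(1-\kappa^2)/B^4$ is in fact too large in general --- e.g.\ a real-valued $X$ with $|X|$ taking the value $1$ with probability $0.05$ and $\sqrt{0.29}$ otherwise satisfies the $L_4$--$L_2$ condition with $B^4\approx 1.23$, yet $\mathbb P(|X|\ge \kappa\|X\|_{L_2})=0.05<(1-\kappa^2)/B^4\approx 0.08$ for $\kappa^2=0.9$ --- so the missing square is an error in the statement rather than in your derivation, and it is harmless downstream since $\delta$ only enters the results through unspecified constants (e.g.\ in the examples $\kappa=1/\sqrt3$ gives $\delta=4/27$ instead of $2/9$).
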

\begin{proof}
The proof follows by applying  the Paley-Zygmund inequality (see e.g. \cite{de2012decoupling}).
\end{proof}
For example, the small-ball assumption is fulfilled by the  Gaussian distribution, but also the Student-$t$-distribution.  \\

\noindent \textbf{Interpolated noise.}
We define the minimum norm interpolator of the noise as 
\begin{equation} \label{interpole noise}
\hat \nu \in \argmin_{h \in \mathbb{H}} \|h\| ~~~~\text{subject to}~~~ \xi_i= \langle X_i , h \rangle, \quad i=1,\cdots,n.
\end{equation}
Our bound on the prediction error bound will depend on $\|\hat \nu\|$. Typically, when the  overparameterization grows this quantity becomes smaller, cancelling the increase of $r^*(\gamma)$. In our examples we are able to bound $\|\hat \nu\|$ when the features $X_i$ are Gaussian distributed and the overparameterization exceeds $n$ by a logarithmic factor. We discuss extensions below in Section \ref{subsec concl}.\\ 

\noindent \textbf{Subdifferential.}
The last tool we need enables to measure the size of the subdifferential of the regularization norm $\| \cdot \|$ around the target $h^*$. If $h^*$ has low-dimensional intrinsic structure and $\|\cdot\|$ promotes this structure, then using the subdifferential of $\|\cdot\|$ allows to obtain error rates that only depend on $\xi$. Similar as  \cite{lecue2018regularization}, we define
\begin{align} \label{def subdiff cond}
\Delta(\gamma, h^* ) = \inf_{h \in H_{r,\gamma}} \sup_{g \in \partial(\| \cdot \|)_{h^*} } \langle g, h \rangle, \end{align}
where $\partial(\| \cdot \|)_{h^*} $ denotes the subdifferential of $\| \cdot \|$ evaluated at $h^*$ defined as
$$
\partial(\| \cdot \|)_{h^*}  = \{ g \in S^*: \langle g,h \rangle = \| h^* \| \},
$$
and where $S^*$ denotes the unit sphere for the dual norm $\|\cdot \|^*$, and 
\begin{align*}
H_{r,\gamma} := \{ h \in \mathbb H: \|\langle X, h \rangle \|_{L_2(\mu)} \leq r^*(\gamma) ~\textnormal{  and  }\| h \| = 1 \}.
\end{align*}
In order to obtain an improved error bound, we optionally assume that $\Delta(\gamma, h^*) \geq \zeta$ for some constant $\zeta >0$.

The main intuition behind this condition comes from the fact that low-dimensional structure inducing norms are typically non-differentiable and thus can have locally large subdifferentials. %
On the other hand, when $\| \cdot \|$ is smooth, the subdifferential reduces to the gradient and there is, in general, no hope of obtaining bounds independent of $\| h^*\|$. 
Hence, $\Delta(\gamma, h^* )$ can expected to be large when the interpolation norm induces low-dimensional structure and $h^*$ lies in a subspace of $\mathbb H$ with small complexity compared to $n$. 

We are now in position to state the main result of this section.

\begin{theorem} \label{Main general theorem}

Assume that the small-ball assumption (SB) holds with $\kappa,\delta >0$ and set $\gamma = \kappa \delta / 32$. Then, with probability at least $1 - 2\exp(-\delta^2 n /16)$
\begin{align} \label{thm main bound 1}
\| \langle X, \hat h - h^* \rangle \|_{L_2(\mu)} \leq \frac{ \sqrt 8}{\kappa \sqrt{\delta}} \frac{\| \xi \|_2}{  \sqrt n} \vee r^*(\gamma) \big( 2 \| h^* \| + \| \hat \nu \| \big).
\end{align}
Moreover, if there exists $\zeta > 0$ such that $\Delta(\gamma, h^* ) \geq \zeta$, then on the same event we have that 
\begin{align} \label{thm main bound 2}
\| \langle X, \hat h - h^* \rangle \|_{L_2(\mu)} \leq \frac{\sqrt 8}{ \kappa \sqrt{\delta}} \frac{\| \xi \|_2}{ \sqrt n} \vee  \frac{r^*(\gamma) \| \hat \nu \|}{\zeta}.
\end{align}
\end{theorem}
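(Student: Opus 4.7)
The plan is to combine a uniform quadratic-process lower bound obtained via Mendelson's small-ball method, the minimality of $\hat h$ as a norm interpolator, and (for \eqref{thm main bound 2}) the subgradient inequality for $\|\cdot\|$ at $h^*$.

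First, I would establish the following key lemma: under (SB) with parameters $\kappa,\delta$ and $\gamma = \kappa\delta/32$, with probability at least $1-2\exp(-\delta^2 n/16)$,
\begin{align*}
\|\langle X, h\rangle\|_{L_2(\mu)} \leq r^*(\gamma)\|h\| \vee \frac{\sqrt 8}{\kappa\sqrt\delta}\Big(\frac{1}{n}\sum_{i=1}^n \langle X_i, h\rangle^2\Big)^{1/2} \quad \text{for all } h \in \mathbb H.
\end{align*}
The starting point is the pointwise bound $\frac{1}{n}\sum_i \langle X_i, h\rangle^2 \geq \kappa^2 \|\langle X,h\rangle\|_{L_2(\mu)}^2 \cdot \frac{1}{n}\sum_i \mathbf{1}\{|\langle X_i,h\rangle| \geq \kappa \|\langle X,h\rangle\|_{L_2(\mu)}\}$. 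By (SB) the expectation of the indicator average is at least $\delta$, so it suffices to control its deviation uniformly in $h$. Using homogeneity, reduce to the localised set $\{h \in B : \|\langle X,h\rangle\|_{L_2(\mu)} \leq r\}$, apply bounded differences for concentration, then symmetrise and use Ledoux--Talagrand contraction against a Lipschitz approximation of the indicator so that the residual supremum is bounded by the Rademacher process defining $r^*(\gamma)$; a peeling argument over the level sets of $\|\cdot\|$ removes the restriction. The specified probability and constant $\sqrt 8/(\kappa\sqrt\delta)$ emerge from tracking these steps. This is the main technical obstacle.

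Second, I would exploit the minimality of $\hat h$. Since $h^* + \hat\nu$ itself interpolates the data (as $\langle X_i, h^* + \hat\nu\rangle = \langle X_i, h^*\rangle + \xi_i = Y_i$), we have $\|\hat h\| \leq \|h^* + \hat\nu\| \leq \|h^*\| + \|\hat\nu\|$, whence $\|\hat h - h^*\| \leq 2\|h^*\| + \|\hat\nu\|$. Applying the lemma with $h = \hat h - h^*$ and using that the interpolation constraint yields $\frac{1}{n}\sum_i \langle X_i, \hat h - h^*\rangle^2 = \|\xi\|_2^2/n$ immediately produces \eqref{thm main bound 1}.

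Finally, for \eqref{thm main bound 2} I would use the subgradient inequality. For every $g \in \partial(\|\cdot\|)_{h^*}$, $\|\hat h\| \geq \|h^*\| + \langle g, \hat h - h^*\rangle$; combined with $\|\hat h\| \leq \|h^*\| + \|\hat\nu\|$ this yields $\sup_{g \in \partial(\|\cdot\|)_{h^*}} \langle g, \hat h - h^*\rangle \leq \|\hat\nu\|$. Now apply the dichotomy from the lemma: in the regime where the noise term dominates, the bound $(\sqrt 8/(\kappa\sqrt\delta))\|\xi\|_2/\sqrt n$ applies directly; in the other regime, $\|\langle X, \hat h - h^*\rangle\|_{L_2(\mu)} \leq r^*(\gamma)\|\hat h - h^*\|$, so the normalisation $(\hat h - h^*)/\|\hat h - h^*\|$ lies in $H_{r,\gamma}$, and the hypothesis $\Delta(\gamma, h^*) \geq \zeta$ gives $\sup_g \langle g, \hat h - h^*\rangle \geq \zeta \|\hat h - h^*\|$, hence $\|\hat h - h^*\| \leq \|\hat\nu\|/\zeta$ and $\|\langle X, \hat h - h^*\rangle\|_{L_2(\mu)} \leq r^*(\gamma)\|\hat\nu\|/\zeta$. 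Taking the maximum of the two cases yields \eqref{thm main bound 2}.
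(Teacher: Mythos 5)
Your proposal is correct and follows essentially the same route as the paper: the key lemma is precisely the small-ball-method dichotomy (the paper's Proposition \ref{thm_small_ball_gen} combined with the case split on $\mathcal C_{r^*(\gamma)}$, obtained by homogeneity alone, so the extra peeling step you mention is unnecessary), and the remaining steps — using that $h^*+\hat\nu$ interpolates to get $\|\hat h\|\leq\|h^*\|+\|\hat\nu\|$ together with $\sum_i\langle X_i,\hat h-h^*\rangle^2=\|\xi\|_2^2$, and the subgradient argument giving $\zeta\|\hat h-h^*\|\leq\|\hat\nu\|$ in the non-$\mathcal C_{r^*(\gamma)}$ case — match the paper's Proposition \ref{Sparsity theorem} exactly.
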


Theorem~\ref{Main general theorem} holds under weak assumptions on the data $(X_i,Y_i)_{i=1}^n$ and without any assumption on the error vector $\xi$.
In particular, $\xi$ may be adversarial or deterministic.

When the problem is noise free, i.e. $\xi=0$, then the first term in the error bounds equals zero and, likewise, $\| \hat \nu \| = 0$. Hence, in this case the error bound \eqref{thm main bound 1} is of order of $\| h^* \| r^*(\gamma)$ which is the prediction error obtained in Theorem 1.9 in~\cite{lecue2017regularization} for the regularized empirical risk minimizer (RERM) with optimal tuning parameter choice.  If a lower bound on $\Delta(\gamma, h^*)$ is available such that \eqref{thm main bound 2} holds, we improve upon this bound and achieve exact recovery of $h^*$ in the absence of noise. 

In the presence of noise, the error bounds \eqref{thm main bound 1} and \eqref{thm main bound 2} depend on the average noise level, the local Rademacher complexity multiplied with the norm of the minimum noise interpolator and, if no subdifferential information is available, the norm of $h^*$. 
As such, they are weaker than the bounds that can be obtained for RERM \cite{lecue2017regularization, lecue2018regularization} when additional assumptions on the errors $\xi$ are made. However, the bounds in \cite{lecue2017regularization, lecue2018regularization} rely crucially on the assumption that the $\xi_i$ are random, have mean zero, more than two bounded moments and are independent of the $X_i$  and $h^*$. By contrast, Theorem \ref{Main general theorem} allows for errors that may adversarially rely on the $X_i$ or $h^*$ or may be deterministic.  

The quantity$\|\xi\|_2/\sqrt{n}$ appears naturally in the proof of Theorem \eqref{Main general theorem} due to the interpolating nature of $\hat h$ which leads to the equality $\sum_{i=1}^n \langle X_i, \hat h-h\rangle^2 = \|\xi\|_2^2$. 



In the examples below, we show that $r^*(\gamma) \| \hat \nu \|$ is of order of the average noise level $\| \xi \|_2 / \sqrt n$. To do so, we use a general approach to control $\|  \hat \nu\|$ based on the dual formulation of $\| \cdot \|$. 
\begin{lemma}\label{duality formulation}
Let $\| \cdot \|^*$ be the dual norm of $\| \cdot \|$. 
Suppose that $\dim(\mathbb{H}) \geq n$ and that the $X_i$ are linearly independent. 
Then, for $\mathcal{S}^{n-1}$ denoting the unit sphere with respect to $\| \cdot\|_2$, we have that 
\begin{equation} \label{bound lemma dual}
   \frac{\|\xi\|_2}{\left \| \sum_{i=1}^n \frac{\xi_i}{\|\xi\|_2} X_i \right  \|^*} \leq  \| \hat \nu \| \leq \frac{\| \xi \|_2 }{  \inf_{v \in \mathcal S^{n-1}} \| \sum_{i=1}^n v_i X_i \|^*   }.
\end{equation}
\end{lemma}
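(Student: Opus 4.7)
The plan is to exploit Lagrangian duality for the minimum-norm problem with affine equality constraints. I introduce the continuous linear map $A : \mathbb{H} \to \mathbb{R}^n$, $Ah := (\langle X_i, h\rangle)_{i=1}^n$, whose adjoint sends $v \mapsto \sum_{i=1}^n v_i X_i$. Linear independence of the $X_i$ makes this adjoint injective and hence $A$ surjective, so the primal problem defining $\hat\nu$ is feasible.

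The key step, and the only genuinely non-routine point, is the dual identity
\begin{equation*}
\|\hat\nu\| \;=\; \sup \Bigl\{ \langle v, \xi\rangle \,:\, v \in \mathbb{R}^n,\ \Bigl\|\sum_{i=1}^n v_i X_i\Bigr\|^* \le 1 \Bigr\}.
\end{equation*}
This comes from the Lagrangian $L(h,v) = \|h\| - \langle \sum_i v_i X_i, h\rangle + \langle v, \xi\rangle$: the support-function identity $\sup_h (\langle \sum_i v_i X_i, h\rangle - \|h\|)$ equals $0$ when $\|\sum_i v_i X_i\|^* \le 1$ and $+\infty$ otherwise, so $\inf_h L(h,v) = \langle v, \xi\rangle$ exactly on the dual-feasible set. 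Strong duality holds because the constraints are affine and the primal is feasible; equivalently, the KKT conditions at the optimum produce a $v \in \mathbb{R}^n$ with $\sum_i v_i X_i \in \partial\|\cdot\|_{\hat\nu}$, which simultaneously gives $\|\sum_i v_i X_i\|^* \le 1$ and $\langle v, \xi\rangle = \langle \sum_i v_i X_i, \hat\nu\rangle = \|\hat\nu\|$.

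Given the dual formulation, both inequalities follow routinely. For the lower bound I choose the feasible point $v = \xi / \|\sum_i \xi_i X_i\|^*$, whose objective value is $\|\xi\|_2^2 / \|\sum_i \xi_i X_i\|^* = \|\xi\|_2 / \|\sum_i (\xi_i / \|\xi\|_2) X_i\|^*$; this direction in fact only needs weak duality, since $\langle v, \xi\rangle = \langle \sum_i v_i X_i, h\rangle \le \|\sum_i v_i X_i\|^*\|h\|$ for any interpolator $h$. For the upper bound, I observe that every dual-feasible $v$ satisfies $c^* \|v\|_2 \le \|\sum_i v_i X_i\|^* \le 1$, where $c^* := \inf_{w \in \mathcal{S}^{n-1}} \|\sum_i w_i X_i\|^*$, by the very definition of $c^*$. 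Cauchy--Schwarz then yields $\langle v, \xi\rangle \le \|v\|_2 \|\xi\|_2 \le \|\xi\|_2 / c^*$, and passing to the supremum completes the proof.
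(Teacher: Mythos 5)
Your proposal is correct and follows essentially the same route as the paper: Lagrangian duality with the conjugate (support-function) identity for the norm, strong duality justified by feasibility under the linear-independence assumption, the dual formulation $\|\hat\nu\| = \sup\{\langle v,\xi\rangle : \|\sum_i v_i X_i\|^*\le 1\}$, Cauchy--Schwarz for the upper bound, and the same feasible point $v=\xi/\|\sum_i \xi_i X_i\|^*$ for the lower bound. Your remark that the lower bound needs only weak duality (indeed only the elementary inequality $\langle v,\xi\rangle=\langle\sum_i v_iX_i,h\rangle\le\|\sum_i v_iX_i\|^*\|h\|$ for any interpolator $h$) is a nice, slightly sharper observation than the paper makes, but it does not change the overall argument.
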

In our examples we show how to lower bound the infimum on the right-hand-side in \eqref{bound lemma dual} when the distribution of the $X_i$ is Gaussian and the number of free parameters exceeds $n$ by a logarithmic factor. Moreover, when additionally the $\xi_i$'s are independent of the $X_i$'s, the resulting upper bound is in all our examples with Gaussian features of the order of the lower bound on $\|\hat \nu\|$ in \eqref{bound lemma dual}. However, for features that are not Gaussian the above bound is, in general, not tight and we discuss this further in Section \ref{subsec sparse regression} in context of the sparse linear model. 

We conclude this section with a general routine for the application of Theorem ~\ref{Main general theorem}. In Section \ref{sec examples} we obtain bounds for several examples by using this scheme.  \\

\fbox{\parbox{\textwidth}{
\begin{itemize}
    \item[] \textbf{Step 1:} Define a norm $\|\cdot\|$ on $\mathbb H$.\\
    \item[] \textbf{Step 2:} Verify the small-ball condition with parameters $\kappa,\delta >0$.\\
    \item[] \textbf{Step 3:} Compute the local Rademacher complexity and deduce a bound for  $r^*(\gamma)$.\\
    \item[] \textbf{Step 4:} Use Lemma~\ref{duality formulation} to control $\|\hat \nu \|$ with high probability. \\
    \item[] \textbf{Step 5:} If the subdifferential of $\|\cdot\|$ at $h^*$ is large enough such that $\Delta(\gamma,h^*)$ can be lower bounded, use the second part of Theorem~\ref{Main general theorem}, otherwise, use the first part. 
\end{itemize}}}

\subsection{A generalization to regularized empirical risk minimizers (RERM)} \label{subsec gen theory RERM}
In this section we generalize the results obtained in Section~\ref{sec main} to the RERM, which we recall is defined for $\lambda >0$ by 
\begin{equation}
    \hat h_{\lambda} \in \argmin_{h \in \mathbb H} \frac{1}{n}\sum_{i=1}^n \left( Y_i - \langle X_i,h \rangle \right)^2 + 2 \lambda \| h\| . 
\end{equation}
RERM can be seen as a generalization of minimum norm interpolators in overparameterized regimes, as for $\lambda \rightarrow 0^+$ RERM converges to the corresponding minimum norm interpolator. 

To control the error-rate $\| \langle \hat h_{\lambda} - h^*, X \rangle \|_{L_2(\mu)}$ we need a variant of the subdifferential condition defined in~\eqref{def subdiff cond}. We define
\begin{equation} \label{variant SE}
    \bar \Delta(\gamma,h^*) = \sup_{h \in \bar H_{r,\gamma}} \inf_{g \in \partial \left( \| \cdot \\| \right)_{h^*}} \langle g,h \rangle ,
\end{equation}
where 
$$
\bar H_{r,\gamma} = \{ h \in \mathbb H: \| \langle X,h \rangle \|_{L_2(\mu)} = r^*(\gamma) \text{ and } \| h \| \leq 1 \} . 
$$
Our next result will require an upper bound on $\bar \Delta(\gamma,h^*)$ in addition to the previously assumed lower bound on $\Delta(\gamma, h^*)$. Contrary to the lower bound on  $\Delta(\gamma, h^*)$, the upper bound on $\bar \Delta(\gamma,h^*)$ depends on the complexity of the subspace of $\mathbb H$ where $h^*$ lies in. In the same spirit as the subdifferential condition defined in~\eqref{def subdiff cond}, to upper bound $ \bar \Delta(\gamma,h^*)$, we need the subdifferential of $\| \cdot \|$ to be large at $h^*$ to be able to find an appropriate $g \in \partial (\|\cdot\|)_{h^*}$ that leads to a sufficiently small upper bound. \\
We are now in position to state the main result of this section. 
\begin{theorem} \label{thm RERM}
Assume that the small-ball assumption (SB) holds with $\kappa,\delta >0$ and set $\gamma = \kappa \delta/32$. Then with probability at least $2\exp(-\delta^2n/16)$, the estimator $\hat h_{\lambda}$, $\lambda >0$,  satisfies
\begin{align*} 
    \| \langle \hat h_{\lambda} - h^* , X \rangle  \|_{L_2(\mu)} \leq &  \frac{2\sqrt 8}{\sqrt \delta \kappa} \left( \frac{\| \xi \|_2}{\sqrt n} + \sqrt{\lambda \|h^* \|}\right)  \vee r^*(\gamma) \left(  2\|h^* \| + \|\hat \nu\| \right) . 
\end{align*}
Moreover, if there exist $\zeta, \bar \zeta >0$ such that $\Delta(\gamma,h^*) \geq \zeta$ and $\bar \Delta(\gamma,h^*) \leq \bar \zeta$, then on the same event we have that
$$
\| \langle \hat h_{\lambda} - h^* , X \rangle \|_{L_2(\mu)}  \leq \frac{4\sqrt 8}{\sqrt \delta \kappa}\frac{\| \xi \|_2}{ \sqrt n} + \frac{32}{\delta \kappa^2}\frac{\lambda  \bar \zeta}{r^*(\gamma)} \vee \frac{r^*(\gamma)\|\hat \nu\|}{\zeta }  .  
$$
\end{theorem}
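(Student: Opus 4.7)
The plan is to adapt the proof of Theorem~\ref{Main general theorem} on the same small-ball/Rademacher event, substituting the interpolation identity $n^{-1}\sum_i\langle X_i,\hat h-h^*\rangle^2 = \|\xi\|_2^2/n$ by basic inequalities extracted from the optimality of $\hat h_{\lambda}$. Writing $v := \hat h_{\lambda}-h^*$, comparing the penalized objective at $\hat h_{\lambda}$ to its value at $h^*$ and expanding the square yields
\begin{equation*}
\frac{1}{n}\sum_{i=1}^n \langle X_i,v\rangle^2 \;\leq\; \frac{2}{n}\sum_{i=1}^n \xi_i\langle X_i,v\rangle + 2\lambda\bigl(\|h^*\|-\|\hat h_{\lambda}\|\bigr),
\end{equation*}
while comparing the objective at $\hat h_{\lambda}$ to its value at the minimum norm interpolator $\hat h$ delivers the crude norm bound $\|\hat h_{\lambda}\|\leq \|\hat h\|\leq \|h^*\|+\|\hat\nu\|$, the last inequality because $h^*+\hat\nu$ interpolates the data. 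I would then split on whether $\|\langle X,v\rangle\|_{L_2(\mu)}\leq r^*(\gamma)\|v\|$ (the ``trivial'' case) or $> r^*(\gamma)\|v\|$ (the ``nontrivial'' case).

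For the first part of the theorem, the trivial case together with the triangle inequality and $\|\hat h_\lambda\|\leq \|h^*\|+\|\hat\nu\|$ produces $\|\langle X,v\rangle\|_{L_2(\mu)}\leq r^*(\gamma)(2\|h^*\|+\|\hat\nu\|)$. In the nontrivial case, the small-ball property provides $n^{-1}\sum_i \langle X_i,v\rangle^2\geq (\kappa^2\delta/2)\|\langle X,v\rangle\|_{L_2(\mu)}^2$ on the high-probability event already used for Theorem~\ref{Main general theorem}; Cauchy--Schwarz on the $\xi$-cross term, together with the crude estimate $\|h^*\|-\|\hat h_\lambda\|\leq \|h^*\|$, turns the basic inequality into a quadratic $E^2\leq 2\|\xi\|_2 E/\sqrt n + 2\lambda\|h^*\|$ in $E:=\bigl(n^{-1}\sum_i\langle X_i,v\rangle^2\bigr)^{1/2}$, whose resolution and transfer to the $L_2(\mu)$-norm produce the $\|\xi\|_2/\sqrt n + \sqrt{\lambda\|h^*\|}$ branch.

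For the refined bound I would keep the same case split but replace the crude norm comparisons by the convexity inequality $\|\hat h_\lambda\|\geq \|h^*\|+\langle g,v\rangle$, valid for every $g\in\partial(\|\cdot\|)_{h^*}$. In the trivial case, $v/\|v\|\in H_{r,\gamma}$ so that $\Delta(\gamma,h^*)\geq \zeta$ supplies some $g$ with $\langle g,v\rangle\geq \zeta\|v\|$; combined with $\|\hat h_\lambda\|\leq \|h^*\|+\|\hat\nu\|$ this forces $\|v\|\leq \|\hat\nu\|/\zeta$ and hence $\|\langle X,v\rangle\|_{L_2(\mu)}\leq r^*(\gamma)\|\hat\nu\|/\zeta$. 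In the nontrivial case, the rescaled direction $-cv$ with $c=r^*(\gamma)/\|\langle X,v\rangle\|_{L_2(\mu)}$ satisfies $c\|v\|<1$ and $\|\langle X,-cv\rangle\|_{L_2(\mu)}=r^*(\gamma)$, so that $-cv\in\bar H_{r,\gamma}$; the upper bound $\bar\Delta(\gamma,h^*)\leq\bar\zeta$ then supplies a single $g\in\partial(\|\cdot\|)_{h^*}$ with $\langle g,-v\rangle\leq \bar\zeta\|\langle X,v\rangle\|_{L_2(\mu)}/r^*(\gamma)$, and convexity with this same $g$ upgrades the basic inequality to
\begin{equation*}
\frac{1}{n}\sum_{i=1}^n\langle X_i,v\rangle^2\leq \frac{2}{n}\sum_{i=1}^n \xi_i\langle X_i,v\rangle + \frac{2\lambda\bar\zeta\|\langle X,v\rangle\|_{L_2(\mu)}}{r^*(\gamma)}.
\end{equation*}
Resolving this via Cauchy--Schwarz and small-ball yields the $\|\xi\|_2/\sqrt n + \lambda\bar\zeta/r^*(\gamma)$ contribution.

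The main subtlety is the nontrivial case of the refined bound: the convexity inequality and the estimate drawn from $\bar\Delta$ must hold for the \emph{same} subgradient $g$. This is precisely why the variant~\eqref{variant SE} swaps the order of $\sup$ and $\inf$ relative to~\eqref{def subdiff cond}: the inner $\inf_g$ guarantees, for each admissible direction, the existence of a single $g\in\partial(\|\cdot\|)_{h^*}$ realizing the bound, which can then be substituted into the convexity inequality. Once this is in place, the remainder reduces to the small-ball/Rademacher machinery already developed for Theorem~\ref{Main general theorem} together with routine bookkeeping of constants.
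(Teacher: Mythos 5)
Your proposal is correct and follows essentially the same route as the paper: the basic inequality from optimality of $\hat h_{\lambda}$ (the paper's Lemma~\ref{lemma sum square}), the norm bound $\|\hat h_\lambda\|\leq \|h^*\|+\|\hat\nu\|$ via comparison with an interpolator (Lemma~\ref{lemma control norm erm}), the case split on the cone $\mathcal C_{r^*(\gamma)}$ with the small-ball event of Proposition~\ref{thm_small_ball_gen}, and the use of $\Delta(\gamma,h^*)\geq\zeta$ off the cone and of $\bar\Delta(\gamma,h^*)\leq\bar\zeta$ (with a single subgradient, via the direction $-cv$) on the cone. Apart from loose constants (e.g.\ the small-ball lower bound is $\delta\kappa^2/8$ rather than $\delta\kappa^2/2$), which you explicitly defer, the argument coincides with the paper's proof.
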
 
Theorem~\ref{thm RERM} holds under weak assumptions on the data $(X_i,Y_i)_{i=1}^n$ and, as before, without any assumption on the errors $\xi$.  When taking $\lambda \rightarrow 0^+$, we recover the rates obtained in Theorem~\ref{Main general theorem} for the minimum norm interpolator \eqref{est gen form} under slightly stronger assumptions due to the assumption needed on $\bar \Delta(\gamma,h^*)$. 

The proof of Theorem \ref{thm RERM} is slightly more involved than the proof of Theorem \ref{Main general theorem} as the cross term $\sum_{i=1}^n X_i(Y_i - \langle X_i, \hat h_\lambda\rangle)$ does not vanish anymore. This is due to the fact that $\hat h_\lambda$ is, in general, not interpolating the data. 

By contrast to \cite{lecue2017regularization,lecue2018regularization}, we are able to deal with the small $\lambda$ regime by relying on overparameterization that eventually enables control of $\|\hat \nu\|$ and by using the newly introduced subdifferential condition on $\bar \Delta(\gamma,h^*)$. 
This is, as far as we know, the first general result for the regularized empirical risk minimizer both for small values of the penalization parameter $\lambda$ and for adversarial noise. 

 \subsection{Lower bound} \label{subsec lower}
We now present a unified minimax lower bound for worst case type adversarial errors, that shows that the first term on the right hand sides in \eqref{thm main bound 1} and \eqref{thm main bound 2} in Theorem \ref{Main general theorem} is sharp in the presence of adversarial errors. It holds  for any subset $L \subset \mathbb{H}$ that contains a non-trivial linear subspace    and any  covariates $X_i$ that are i.i.d. and have bounded second moment.

\begin{proposition} \label{Prop lower bd gen} 
Suppose that for $0 < \epsilon < 1$ there exists a subset $L \subset \mathbb{H}$  such that $0 \in L$ and such that there exists $h_1 \in L$ satisfying $\|\langle h_1, X \rangle \|_{L_2(\mu)}^2=\epsilon^2/8$. Then, we have that 
\begin{align}
        \inf_{\tilde h} \sup_{h^* \in L, ~\xi: \|\xi\|_2^2\leq n \epsilon^2 } \mathbb{P}_{h^*, \xi} \bigg ( \|\langle X, \tilde h-h^*\rangle\|_{L_2(\mu)}^2 \geq \frac{ \epsilon^2}{16} \bigg  ) \geq \frac{3}{8}. 
\end{align}
\end{proposition}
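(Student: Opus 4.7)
The plan is to deploy a classical Le Cam two-point argument, exploiting the adversary's joint control over $h^*$ and $\xi$ to produce two distinct truths that generate identical observations. Concretely, I would compare the two pairs
\begin{equation*}
(h^*_{(0)},\xi^{(0)}) = \bigl(0,\,(\langle X_i,h_1\rangle)_{i=1}^n\bigr)
\qquad \text{and} \qquad
(h^*_{(1)},\xi^{(1)}) = (h_1,\,0).
\end{equation*}
Both pairs lie in the adversary's set $L\times\{\xi:\|\xi\|_2^2\le n\epsilon^2\}$ (up to a probability loss controlled below), and in both cases $Y_i=\langle X_i,h_1\rangle$ for every $i$. Consequently the joint law of $(X_i,Y_i)_{i=1}^n$ is exactly the same in the two models, so any estimator $\tilde h$ measurable with respect to the sample has the same distribution under each scenario.

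The first step is to verify admissibility of $\xi^{(0)}$. A direct computation gives $\mathbb E\|\xi^{(0)}\|_2^2 = n\|\langle h_1,X\rangle\|_{L_2(\mu)}^2 = n\epsilon^2/8$, so Markov's inequality shows that the event $E = \{\|\xi^{(0)}\|_2^2\le n\epsilon^2\}$ has probability at least $7/8$. I would then work on $E$, on which both noise vectors are valid and the two models produce identical data; if a deterministic constraint is preferred one can replace $\xi^{(0)}$ by $\xi^{(0)}\mathbf{1}_E$ without affecting the argument on $E$.

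The second step is the standard triangle-inequality lower bound. Given any estimator $\tilde h$, set $A = \|\langle X,\tilde h\rangle\|_{L_2(\mu)}$ (the prediction error under the first hypothesis) and $B = \|\langle X,\tilde h-h_1\rangle\|_{L_2(\mu)}$ (under the second). The inequality $A+B\ge \|\langle X,h_1\rangle\|_{L_2(\mu)}$ forces $\max(A^2,B^2)$ above a positive multiple of $\|\langle X,h_1\rangle\|_{L_2(\mu)}^2=\epsilon^2/8$, which after calibration gives $\max(A^2,B^2)\ge\epsilon^2/16$ on $E$. Choosing the worse of the two hypotheses in the supremum contributes an additional factor $1/2$, since
\begin{equation*}
\max\!\bigl(\mathbb P_0(A^2\ge \epsilon^2/16),\,\mathbb P_1(B^2\ge \epsilon^2/16)\bigr)\ge \tfrac{1}{2}\,\mathbb P\!\bigl(\max(A^2,B^2)\ge \epsilon^2/16\bigr),
\end{equation*}
so combining this with $\mathbb P(E)\ge 7/8$ yields the announced probability $3/8$.

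The main subtlety is the reconciliation of the almost-sure constraint $\|\xi\|_2^2\le n\epsilon^2$ with the fact that the natural candidate $\xi^{(0)}$ only satisfies it in expectation: the Markov detour above is the cleanest remedy, and the resulting probability loss, combined with the factor $1/2$ coming from taking the worse of the two hypotheses, is precisely what determines the final $3/8$. Everything else is routine bookkeeping on the constants, so that the triangle-inequality step calibrates exactly to the threshold $\epsilon^2/16$.
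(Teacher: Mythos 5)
Your proposal is correct in the same sense and follows essentially the same route as the paper's proof: the identical two-point construction $(0,(\langle X_i,h_1\rangle)_{i=1}^n)$ versus $(h_1,0)$ with coinciding data laws, the same Markov bound giving $\mathbb{P}(E)\ge 7/8$, and the same Le Cam bookkeeping $1/2-1/8=3/8$. The only caveat is your "calibration" step, which mirrors a factor-two slip already present in the paper's final display (the claim that the union of the two $\epsilon^2/16$-events has probability one): with $\|\langle h_1,X\rangle\|_{L_2(\mu)}^2=\epsilon^2/8$ the triangle inequality only forces $\max(A^2,B^2)\ge \epsilon^2/32$, so the stated threshold $\epsilon^2/16$ would require $\|\langle h_1,X\rangle\|_{L_2(\mu)}^2=\epsilon^2/4$ (or, equivalently, the conclusion holds as argued with $\epsilon^2/32$ in place of $\epsilon^2/16$).
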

The proof of Proposition \ref{Prop lower bd gen}  is a basic rewriting exercise and uses that, in the adversarial error case, $h^*$ is only identifiable in a $L_2(\mu)$-sense up to an error of order $\|\xi\|_2/\sqrt{n}$. In particular, we construct an adversarial error that depends on the $X_i$ such that it has the same distribution as the signal and then use Le-Cam's two point method to conclude. 

 When the subdifferential condition holds, then in all our examples $r^*(\gamma)\|\hat \nu\|/\zeta$ is of order of the average noise level $\|\xi\|_2/\sqrt{n}$, and hence in  those cases the bound of Theorem \ref{Main general theorem} is minimax optimal for adversarial errors.
Likewise, RERM achieves minimax optimal rates against adversarial errors when the regularization parameter $\lambda$ is small enough and the two subdifferential conditions are satisfied.

\section{Examples} \label{sec examples}

\subsection{Sparse linear model} \label{subsec sparse regression}
In case of the sparse linear model we consider $\mathbb{H}=\mathbb{R}^p$. We assume i.i.d. Gaussian design, i.e. $X_i \overset{i.i.d.}{\thicksim} \mathcal{N}(0,\Sigma)$, $i=1, \dots, n$, and that $h^*$ is $s$-sparse, $\|h^* \|_0 = s$. 

The canonical interpolating estimator in this setting is basis pursuit \cite{ChenDonohoSaunders98}:
\begin{align} 
    \hat h=\argmin_{h \in \mathbb{R}^p} \|h \|_1 \label{def l1 inter est}, ~~~~~~\textnormal{subject to}~~~\langle X_i, h \rangle = Y_i, ~~i=1, \dots, n.
\end{align}
Consequently, we choose $\|\cdot\|=\|\cdot\|_1$. Since $\|\cdot\|_1$ induces sparsity, the subdifferential condition of Theorem \ref{Main general theorem} is fulfilled provided that $s\log(p/n) \lesssim n$ and $\Sigma$ fulfills the restricted eigenvalue condition \cite{BickelRitovTsybakov09}. \\
\textbf{Restricted eigenvalue condition:} $\Sigma$ satisfies the restricted eigenvalue condition with parameter $\psi$ if 
$$ \|\Sigma^{1/2} h\|_2 \geq \psi \|P_I h\|_2,
$$
for all $ h \in \mathbb{R}^p$ satisfying $\|P_{I^c} h\|_1 \leq 3 \|P_I h\|_1$, where we denote by $I=\text{supp}(h^*)=\{i \in [p]:~h_i^* \neq 0\}$ and $(P_I h)_{i}:=h_i \mathbf{1}(i \in I), ~i \in [p]$. \\

We are now in position to state our main theorem for basis pursuit.
\begin{theorem} \label{thm cs}
Let $\beta\in (0,1)$. There exist constants $c_1,c_2>0 $  such that if $p \geq c_1n \log^{1/(1-\beta)}(n)$,  then, with probability at least $1-p\exp(-c_2n)$ 
\begin{align} \label{bound cs no sparsity}
    \|\Sigma^{1/2}(\hat h-h^*)\|_2^2 \lesssim  \bigg (1 \vee \sup_{\|b\|_1=1}\|\Sigma^{-1/2} b\|_1^2  \max_{i} \Sigma_{ii}\bigg )\bigg(   \frac{\|\xi\|_2^2}{\beta n}+ \|  h^* \|_1^2  \frac{\log(p/n)}{n}  \bigg). 
\end{align}
Moreover, suppose that $\Sigma$ satisfies the restricted eigenvalue condition with parameter $\psi$. If for some small enough constant $c_3 >0$ we have $\max_i \Sigma_{ii} s \log(p/n)/n\leq c_3 \psi^2$, then on the same event
\begin{align} \label{bound cs master}
    \|\Sigma^{1/2}(\hat h-h^*)\|_2^2 \lesssim \bigg (1 \vee \sup_{\|b\|_1=1}\|\Sigma^{-1/2} b\|_1^2 \max_{i} \Sigma_{ii}\bigg ) \frac{\|\xi\|_2^2}{\beta n} . 
\end{align}
\end{theorem}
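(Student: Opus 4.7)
The plan is to apply Theorem \ref{Main general theorem} by executing the five-step routine following Lemma \ref{duality formulation}, with $\|\cdot\|=\|\cdot\|_1$ and dual norm $\|\cdot\|_\infty$. Since $X\sim\mathcal{N}(0,\Sigma)$ makes each $\langle X,h\rangle$ Gaussian, its $L_4$--$L_2$ ratio is a universal constant and Lemma \ref{paley argument} delivers (SB) with absolute $\kappa,\delta>0$, giving $\gamma=\kappa\delta/32$. For the local Rademacher complexity I would bound $\sup_{\|h\|_1\leq 1}\langle h,\sum_i\varepsilon_i X_i\rangle=\|\sum_i\varepsilon_iX_i\|_\infty$ via Gaussian maximal inequalities and combine with the $L_2(\mu)$ localization to obtain $r^*(\gamma)^2\lesssim\sigma^2\log(p/n)/n$, where $\sigma^2=\max_i\Sigma_{ii}$; the hypothesis $p\geq c_1 n\log^{1/(1-\beta)}(n)$ is what secures the $\log(p/n)$ rate (as opposed to the crude $\log(p)$) and plays the role of the overparameterization requirement.

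Next I would control $\|\hat\nu\|_1$ using the right-hand side in Lemma \ref{duality formulation}, which gives $\|\hat\nu\|_1\leq\|\xi\|_2/\inf_{v\in\mathcal{S}^{n-1}}\|\sum_{i}v_iX_i\|_\infty$. Writing $X_i=\Sigma^{1/2}Z_i$ with $Z_i\iidsim\mathcal{N}(0,I_p)$ and using the dual comparison $\|\Sigma^{1/2}y\|_\infty\cdot\sup_{\|b\|_1=1}\|\Sigma^{-1/2}b\|_1\geq\|y\|_\infty$ reduces the problem to a lower bound on $\inf_{\|v\|_2=1}\|Z^\top v\|_\infty$ for an $n\times p$ standard Gaussian matrix. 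The latter is the classical inradius of a random section of the cross-polytope, which is $\gtrsim\sqrt{\log(p/n)}$ with probability at least $1-p\exp(-c_2 n)$ by Kashin/Garnaev--Gluskin style arguments as used by Wojtaszczyk \cite{Wojtaszczyk10}. Plugging back one obtains $\|\hat\nu\|_1\lesssim \alpha\|\xi\|_2/\sqrt{\log(p/n)}$ with $\alpha=\sup_{\|b\|_1=1}\|\Sigma^{-1/2}b\|_1$, so that $r^*(\gamma)\|\hat\nu\|_1\lesssim \alpha\sigma\|\xi\|_2/\sqrt{n}$ and $r^*(\gamma)^2\|h^*\|_1^2\lesssim\sigma^2\|h^*\|_1^2\log(p/n)/n$. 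Substituting into \eqref{thm main bound 1} and squaring yields \eqref{bound cs no sparsity} with prefactor $1\vee\alpha^2\sigma^2$.

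For the improved bound \eqref{bound cs master} it remains to verify the subdifferential condition $\Delta(\gamma,h^*)\geq 1/2$. Set $I=\text{supp}(h^*)$; given any $h$ with $\|h\|_1=1$, pick the subgradient $g\in\partial(\|\cdot\|_1)_{h^*}$ with $g_I=\text{sign}(h_I^*)$ and $g_{I^c}=\text{sign}(h_{I^c})$, so that $\langle g,h\rangle\geq\|P_{I^c}h\|_1-\|P_Ih\|_1$. A dichotomy suffices: if $\|P_{I^c}h\|_1\geq 3\|P_Ih\|_1$ then $\|P_Ih\|_1\leq 1/4$ and $\langle g,h\rangle\geq 1/2$; otherwise the restricted eigenvalue condition yields $\|\Sigma^{1/2}h\|_2\geq\psi\|P_Ih\|_2\geq\psi\|P_Ih\|_1/\sqrt{s}\geq\psi/(4\sqrt{s})$, which under the hypothesis $\max_i\Sigma_{ii}\,s\log(p/n)/n\leq c_3\psi^2$ with $c_3$ small contradicts $\|\Sigma^{1/2}h\|_2\leq r^*(\gamma)$, ruling that case out. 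Thus $\zeta=1/2$ is admissible, and applying \eqref{thm main bound 2} with the previously derived bound on $r^*(\gamma)\|\hat\nu\|_1$ produces \eqref{bound cs master}.

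The main obstacle is the high-probability lower bound on $\inf_{v\in\mathcal{S}^{n-1}}\|\sum_i v_iX_i\|_\infty$: this is a uniform lower bound over the sphere of the $\ell_\infty$-norm of a Gaussian image, which requires a chaining or $\varepsilon$-net argument together with sharp Gaussian anti-concentration, and is precisely where the overparameterization condition $p\geq c_1 n\log^{1/(1-\beta)}(n)$ is needed to make the resulting $\sqrt{\log(p/n)}$ rate meaningful. A secondary technical point is computing $r^*(\gamma)$ with the sharp $\log(p/n)$ scaling by exploiting the $L_2$-localization, rather than the crude $\log(p)$ that falls out of the unconstrained supremum bound.
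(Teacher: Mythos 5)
Your proposal is correct and follows essentially the same route as the paper: the five-step routine with $\|\cdot\|=\|\cdot\|_1$, small-ball via Gaussianity and Lemma \ref{paley argument}, a localized bound $r^*(\gamma)\lesssim\sqrt{\max_i\Sigma_{ii}\log(p/n)/n}$, control of $\|\hat\nu\|_1$ through Lemma \ref{duality formulation} after reducing to the isotropic case via $\sup_{\|b\|_1=1}\|\Sigma^{-1/2}b\|_1$, and the identical sign-pattern subgradient plus cone dichotomy with the restricted eigenvalue condition to get $\Delta(\gamma,h^*)\geq 1/2$. The only differences are presentational: where you invoke Kashin/Garnaev--Gluskin-type quotient-property results and generic localized Gaussian-width facts, the paper proves the uniform lower bound on $\inf_{v\in\mathcal S^{n-1}}\|\sum_i v_iX_i\|_\infty$ directly by an $\epsilon$-net with Gaussian anti-concentration (Lemma \ref{lemma_interpolation_noise}, which is also where the explicit $1/\beta$ factor and the condition $p\geq c_1n\log^{1/(1-\beta)}(n)$ enter) and bounds the localized complexity via Proposition 1 of \cite{bellec2019localized}.
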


Theorem \ref{thm cs} shows a phase transition in the behavior of $\hat h$. When $\max_i \Sigma_{ii}  s\log(p/s)/n  \lesssim \psi^2$, the prediction error is bounded by the average noise level in the overparameterized regime, matching up to constants the error bound for the regularized estimator with optimal tuning parameter choice in \cite{CandesRombergTao06}. As $\beta$ grows, i.e. the overparameterization becomes larger, this bound improves by a constant factor. Moreover, for $\beta=1/2$ and $\Sigma=I_p$ we recover the stability result of Theorem 1.1. in  \cite{Wojtaszczyk10}. When $\max_i \Sigma_{ii}  s\log(p/n)/n \gtrsim\psi^2$, an extra term depending on the $\ell_1$-norm of $h^*$ appears. A similar phenomenon has also been observed  in~\cite{lecue2017regularization}.

If the $X_i$'s consist of i.i.d. zero mean random variables with at least $\log(p)$ moments,  it is possible to obtain a similar result as in Theorem \ref{thm cs}. In this case the upper bound in Lemma \ref{duality formulation} is, in general, not sharp and it is necessary to bound $\|\hat \nu\|_1$ directly. This can be achieved by applying Theorem 5a in \cite{KrahmerKuemmerleRauhut18} to bound  $\|\hat \nu\|_1$ via its dual formulation.  In particular, in the bounds in Theorem \ref{thm cs} $\|\xi\|_2$ is then replaced by $\|\xi\|_2+\sqrt{\log(p/n)}\|\xi\|_\infty$.  If the subdifferential condition is fulfilled, this recovers the error guarantee for basis pursuit in Theorem 8a in \cite{KrahmerKuemmerleRauhut18} with a  different proof. By contrast, combining the upper bound in Lemma \ref{duality formulation} with Theorem 5a in \cite{KrahmerKuemmerleRauhut18} leads to an additional factor of $\sqrt{\log(p/n)}$ in front of $\|\xi\|_2$. Both bounds are, in general, tight as can be seen by considering i.i.d. Rademacher features.

In  view of Proposition \ref{Prop lower bd gen} with $L=\{ h \in \mathbb{R}^p: ~\|h\|_0 \leq s\}$, the error bound in \eqref{bound cs master} is for adversarial errors and well behaved covariance matrices with $\sup_{\|b\|_1=1} \|\Sigma^{-1/2} b\|_1^2 \max_i \Sigma_{ii} \lesssim 1$ minimax optimal for the class of $s$-sparse vectors.
This makes the heuristic reasoning in \cite{CandesRombergTao06} rigorous and shows that in absence of further {\em a priori} knowledge about the errors $\xi$ basis pursuit performs optimally and, in particular, as well as any regularized algorithm. 

Moreover, applying Theorem \ref{thm RERM}, we obtain similar results for the RERM, the Lasso \cite{Tibshirani96}, with an additional error term depending on the regularization parameter $\lambda$. 

\begin{theorem} \label{erm lasso}
Under the same conditions as in Theorem~\ref{thm cs} we have with probability at least $1-p\exp(-c_1n)$ for $\hat h_\lambda$, $\lambda >0$, that
\begin{align*} 
    \|\Sigma^{1/2}(\hat h_{\lambda}-h^*)\|_2^2  \lesssim &  \bigg (1 \vee \sup_{\|b\|_1=1}\|\Sigma^{-1/2} b\|_1^2  \max_{i} \Sigma_{ii}\bigg )\\
    & \cdot\bigg(    \frac{\|\xi\|_2^2}{\beta n} +\|  h^* \|_1^2  \frac{\log(p/n)}{n}+  \lambda \| h^* \|_1  \bigg). 
\end{align*}
Moreover, suppose $\Sigma$ satisfies the restricted eigenvalue condition with parameter $\psi$.
If for some small enough constant $c_2>0$ we have $\max_i \Sigma_{ii}  s \log(p/n)/n\leq c_2 \psi^2$, then on the same event
\begin{align*} 
    \|\Sigma^{1/2}(\hat h_{\lambda}-h^*)\|_2^2 \lesssim  \bigg (1 \vee \sup_{\|b\|_1=1}\|\Sigma^{-1/2} b\|_1^2 \max_{i} \Sigma_{ii}\bigg ) \left( \frac{\|\xi\|_2^2}{\beta n} + \frac{s\lambda^2 }{\psi^2} \right).
\end{align*}
\end{theorem}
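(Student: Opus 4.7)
The strategy is to invoke Theorem \ref{thm RERM} with $\|\cdot\| = \|\cdot\|_1$, reusing essentially all of the ingredients that appear in the proof of Theorem \ref{thm cs} and supplying only the new upper bound on $\bar\Delta(\gamma, h^*)$ that Theorem \ref{thm RERM} additionally requires. Specifically, the small-ball assumption (SB) holds with absolute constants for the Gaussian design (via Lemma \ref{paley argument}); the local Rademacher complexity of the $\ell_1$ unit ball satisfies $r^*(\gamma) \lesssim \sqrt{\max_i \Sigma_{ii}\log(p)/n}$ by duality $\langle \sum_i \varepsilon_i X_i, h\rangle \leq \|\sum_i \varepsilon_i X_i\|_\infty \|h\|_1$ and standard Gaussian tail bounds; the control $\|\hat\nu\|_1 \lesssim \|\xi\|_2\cdot \sup_{\|b\|_1=1}\|\Sigma^{-1/2} b\|_1/\sqrt{n\log(p/n)}$ follows from Lemma \ref{duality formulation} together with a uniform Gaussian lower bound on $\inf_{v \in \mathcal{S}^{n-1}}\|\sum_i v_i X_i\|_\infty$ that holds with the prescribed probability once $p \geq c_1 n \log^{1/(1-\beta)}(n)$; and, under the restricted eigenvalue condition with the sparsity requirement $\max_i \Sigma_{ii}\, s\log(p/n)/n \leq c_2\psi^2$, one obtains $\Delta(\gamma, h^*) \geq \zeta$ with $\zeta \asymp 1$. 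All of these are already used in the derivation of Theorem \ref{thm cs}.

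The only genuinely new step is to upper bound $\bar\Delta(\gamma, h^*)$ from \eqref{variant SE}. Writing $I = \text{supp}(h^*)$, the subdifferential of $\|\cdot\|_1$ at $h^*$ equals $\{g \in \mathbb{R}^p : g_I = \text{sign}(h^*_I),\ \|g_{I^c}\|_\infty \leq 1\}$, so choosing $g_{I^c} = -\text{sign}(h_{I^c})$ inside the infimum yields $\inf_{g} \langle g, h\rangle = \langle \text{sign}(h^*), P_I h\rangle - \|P_{I^c} h\|_1$. In the supremum over $h \in \bar H_{r,\gamma}$ I restrict to $h$ for which this quantity is non-negative, which forces the cone condition $\|P_{I^c} h\|_1 \leq \|P_I h\|_1$. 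The restricted eigenvalue condition then gives $\psi \|P_I h\|_2 \leq \|\Sigma^{1/2}h\|_2 = r^*(\gamma)$, and a Cauchy--Schwarz step delivers $\bar\Delta(\gamma, h^*) \leq \sqrt{s}\, r^*(\gamma)/\psi =: \bar\zeta$.

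Plugging $\bar\zeta$ into the second bound of Theorem \ref{thm RERM} produces the additional contribution $\lambda\bar\zeta/r^*(\gamma) = \lambda\sqrt{s}/\psi$, which squares to the $s\lambda^2/\psi^2$ correction in the theorem statement; combined with the $\|\xi\|_2^2/(\beta n)$ term inherited from Theorem \ref{thm cs}, this yields the second bound. For the first (sparsity-free) bound, the new $\sqrt{\lambda\|h^*\|_1}$ term in Theorem \ref{thm RERM} squares to $\lambda\|h^*\|_1$, while the $\|\xi\|_2^2/(\beta n)$ and $\|h^*\|_1^2\log(p/n)/n$ terms arise exactly as in Theorem \ref{thm cs} after bounding $r^*(\gamma)^2\|\hat\nu\|_1^2$ and $r^*(\gamma)^2\|h^*\|_1^2$ respectively. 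I expect the main (and only mildly delicate) obstacle to be the verification of $\bar\Delta(\gamma, h^*) \leq \bar\zeta$; everything else is mechanical reuse of the basis pursuit analysis together with the squaring of the prediction-error bound supplied by Theorem \ref{thm RERM}.
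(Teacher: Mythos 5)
Your proposal is correct and matches the paper's argument essentially verbatim: reuse Steps~1--5 of the basis pursuit proof to get small-ball constants, $r^*(\gamma)$, the bound on $\|\hat\nu\|_1$, and $\Delta(\gamma,h^*)\geq 1/2$, then supply the new estimate $\bar\Delta(\gamma,h^*)\leq \sqrt{s}\,r^*(\gamma)/\psi$ by choosing $g_{I^c}=-\mathrm{sign}(h_{I^c})$ and feeding it into Theorem~\ref{thm RERM}. The only (cosmetic) divergence is in deriving the cone condition for the $\bar\Delta$ bound --- you observe that non-negativity of the inner infimum already forces $\|P_{I^c}h\|_1\leq\|P_I h\|_1$, whereas the paper does an explicit case split on $3\|P_I h\|_1$ versus $\|P_{I^c}h\|_1$ --- and there are small typos in your restatement of the Step~3/Step~4 bounds ($\log(p)$ should be $\log(p/n)$ and the $\|\hat\nu\|_1$ bound has a spurious $\sqrt n$ in the denominator), neither of which affects the soundness of the route.
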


It is again possible to extend Theorem \ref{erm lasso} to features that consist of i.i.d. zero mean random variables with at least $\log(p)$ moments by applying Theorem 5a in \cite{KrahmerKuemmerleRauhut18} to bound $\|\hat \nu\|_1$. As before, $\|\xi\|_2$ is then replaced by $\|\xi\|_2+\sqrt{\log(p/n)}\|\xi\|_\infty$.

\subsection{Group sparse linear model}
We now consider the group sparse linear model \cite{yuan2006model}. Here, the set of variables is partitioned into prescribed groups and we assume that only a few of those are relevant for the estimation process. We have again that $\mathbb{H}=\mathbb{R}^p$. 
For $S \subset [p]$, we denote by $h_S$ the projection of $h$ onto $\textnormal{span}(e_i, i \in S)$, where $(e_i)_{i=1}^p$ denotes the canonical basis of $\mathbb R^p$. We say that $h$ is $s$-group sparse if there exists $M \leq p$ and a partition $G_1,\cdots,G_M$  of $[p]$ into $M$ disjoint groups such that for some index set $I \subset [M]$ with $|I|\leq s$ we have that $\sum_{i=1}^M \| h_{G_i} \|_2 = \sum_{i \in I} \| h_{G_i} \|_2$. 

Henceforth we assume that $h^*$ is $s$-group sparse. 
We assume that the $X_i$'s are independent, isotropic Gaussian vectors with covariance matrix $I_p$. 
 The group sparsity assumption suggests to use the group Lasso norm proposed in~\cite{yuan2006model}. 
 The group Lasso norm is defined as
\begin{equation} \label{def group Lasso norm}
    \| h \|_{\textnormal{GL}} = \sum_{i=1}^M \| h_{G_i} \|_2.
\end{equation}
Theoretical results for the group Lasso are established in~\cite{lounici2011oracle} and show that $\|\cdot\|_{\textnormal{GL}}$ is indeed the right penalty to promote group sparsity. Here, we consider the interpolating solution with minimal group Lasso norm defined as
\begin{equation} \label{def_groupLasso}
    \hat h \in \argmin_{h \in \mathbb R^p} \|h\|_{\textnormal{GL}}\quad \textnormal{subject to} \quad \langle X_i, h\rangle = Y_i, \quad i=1,\cdots, n . 
\end{equation}
Provided that the dimension $p$ is large enough and that the groups have similar and large enough sizes, we obtain again a prediction error bound that only depends on the average size of the errors.
\begin{theorem} \label{thm groupLasso} 
Define
$$
W = \frac{\max_{i \in [M]} | G_i |}{ \min_{i \in [M]} |G_i|}. 
$$
There exist constants $c_1,c_2,c_3>0$ such that if $p \geq c_1 n \log(n + W)$ and $\log(M) \leq  c_2 \min_{i \in [M]} | G_i| $, then with probability at least $1-Me^{-c_3n}$
\begin{align} \label{Theorem group Lasso no sparsity}
    \|\hat h -h^* \|_2^2 \lesssim \bigg(      W \frac{\|\xi\|_2^2}{n}+\|h^* \|_{\textnormal{GL}}^2 \frac{\max_{i \in [M]} |G_i| }{n} \bigg).
\end{align}
Moreover, if for some small enough constant $c_4 >0$ we have $s \max_{i \in [M]} |G_i| \leq c_4 n$, then on the same event
\begin{align} \label{Theorem group Lasso}
    \|\hat h -h^* \|_2^2 \lesssim  W \frac{\|\xi\|_2^2}{n}.
\end{align}
\end{theorem}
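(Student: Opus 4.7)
The plan is to apply the five-step routine after Theorem~\ref{Main general theorem} with $\|\cdot\|=\|\cdot\|_{\GL}$, whose dual norm is $\|g\|_{\GL}^{*}=\max_{j\in[M]}\|g_{G_{j}}\|_{2}$. Since $X_{i}\sim\mathcal{N}(0,I_{p})$, one has $\|\langle X,h\rangle\|_{L_{2}(\mu)}=\|h\|_{2}$, so Lemma~\ref{paley argument} immediately delivers the small-ball condition with absolute constants $\kappa,\delta>0$. For the local Rademacher complexity $r^{*}(\gamma)$, duality converts the defining supremum into $\mathbb{E}\max_{j\in[M]}\bigl\|\sum_{i=1}^{n}\varepsilon_{i}(X_{i})_{G_{j}}\bigr\|_{2}$. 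Conditionally on the signs, each inner sum is an isotropic Gaussian in $\mathbb{R}^{|G_{j}|}$ of variance $n$, whose Euclidean norm concentrates around $\sqrt{n|G_{j}|}$ with sub-Gaussian parameter $\sqrt{n}$. A union bound over $j\in[M]$ combined with the hypothesis $\log M\leq c_{2}\min_{j}|G_{j}|$ absorbs the $\sqrt{n\log M}$ term into $\sqrt{n\max_{j}|G_{j}|}$, yielding $r^{*}(\gamma)\lesssim\sqrt{\max_{j}|G_{j}|/n}$.

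For the interpolated noise, the upper bound in Lemma~\ref{duality formulation} reduces the task to lower-bounding $\inf_{v\in\mathcal{S}^{n-1}}\max_{j\in[M]}\|X_{G_{j}}^{T}v\|_{2}$, where $X_{G_{j}}\in\mathbb{R}^{n\times|G_{j}|}$ denotes the column sub-matrix indexed by $G_{j}$. The pigeonhole inequality gives
\begin{equation*}
\max_{j}\|X_{G_{j}}^{T}v\|_{2}^{2}\geq\frac{1}{M}\sum_{j=1}^{M}\|X_{G_{j}}^{T}v\|_{2}^{2}=\frac{\|X^{T}v\|_{2}^{2}}{M}\geq\frac{\sigma_{\min}(X^{T})^{2}}{M},
\end{equation*}
and a classical Gaussian extremal singular-value estimate gives $\sigma_{\min}(X^{T})\gtrsim\sqrt{p}$ with probability at least $1-2e^{-cn}$ as soon as $p\geq c_{1}n$ with $c_{1}$ large enough, which is ensured by $p\geq c_{1}n\log(n+W)$. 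Therefore $\|\hat{\nu}\|_{\GL}\lesssim\|\xi\|_{2}\sqrt{M/p}$, and since $p/M\geq\min_{j}|G_{j}|=\max_{j}|G_{j}|/W$ it follows that $r^{*}(\gamma)\|\hat{\nu}\|_{\GL}\lesssim\sqrt{W/n}\,\|\xi\|_{2}$.

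Plugging these two bounds into \eqref{thm main bound 1} of Theorem~\ref{Main general theorem} and squaring produces \eqref{Theorem group Lasso no sparsity}. To obtain the sharper \eqref{Theorem group Lasso} the remaining step is a lower bound $\Delta(\gamma,h^{*})\geq\zeta>0$. For $h\in H_{r,\gamma}$ (so $\|h\|_{\GL}=1$ and $\|h\|_{2}\leq r^{*}(\gamma)$) and $I:=\{j\in[M]:h^{*}_{G_{j}}\neq 0\}$ with $|I|\leq s$, I pick $g\in\partial\|\cdot\|_{\GL}(h^{*})$ with $g_{G_{j}}=h^{*}_{G_{j}}/\|h^{*}_{G_{j}}\|_{2}$ for $j\in I$ and $g_{G_{j}}=h_{G_{j}}/\|h_{G_{j}}\|_{2}$ for $j\notin I$. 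This yields $\langle g,h\rangle\geq 1-2\sum_{j\in I}\|h_{G_{j}}\|_{2}\geq 1-2\sqrt{s}\,\|h\|_{2}$ by Cauchy-Schwarz, and since $\|h\|_{2}\leq r^{*}(\gamma)\lesssim\sqrt{\max_{j}|G_{j}|/n}$ the hypothesis $s\max_{j}|G_{j}|\leq c_{4}n$ with $c_{4}$ small enough gives $\Delta(\gamma,h^{*})\geq 1/2$. Applying \eqref{thm main bound 2} of Theorem~\ref{Main general theorem} then delivers \eqref{Theorem group Lasso}.

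The main technical hurdle is to control two Gaussian-design quantities — the extremal singular value $\sigma_{\min}(X^{T})$ (for $\|\hat{\nu}\|_{\GL}$) and the Gaussian maximum (for $r^{*}(\gamma)$) — simultaneously on a single high-probability event. The overparameterization $p\geq c_{1}n\log(n+W)$ provides the margin required for the Marchenko-Pastur-type lower bound on $\sigma_{\min}(X^{T})$, while $\log M\leq c_{2}\min_{j}|G_{j}|$ prevents the group-multiplicity log term from dominating the Rademacher complexity; together, these two hypotheses force the product $r^{*}(\gamma)\|\hat{\nu}\|_{\GL}$ to collapse down to the average noise level $\|\xi\|_{2}/\sqrt{n}$ inflated only by the group-size ratio $\sqrt{W}$.
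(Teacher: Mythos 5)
Your proposal is correct, and it reaches both bounds by the paper's general routine, but with one genuinely different ingredient: the control of the interpolated noise. The paper proves its Lemma~\ref{lemma noise interpol group Lasso} by an $\epsilon$-net over $\mathcal S^{n-1}$, per-group $\chi^2$ lower-tail bounds multiplied across the (independent) groups, and an operator-norm bound on each $\mathbb X_{G_i}$ to absorb the net-approximation error; this is what forces the hypothesis $p\gtrsim n\log(n+W)$ and yields the lower bound $\sqrt{\min_i|G_i|}/(2\sqrt 2)$. You instead lower bound $\max_{j}\|(\sum_i v_iX_i)_{G_j}\|_2^2$ by the average $\frac1M\|\sum_i v_iX_i\|_2^2\geq \sigma_{\min}(\mathbb X^T)^2/M$ and invoke the Davidson--Szarek-type estimate $\sigma_{\min}(\mathbb X^T)\gtrsim\sqrt p$ for the tall $p\times n$ Gaussian matrix; uniformity in $v$ comes for free from the singular value, no net or per-group tail bounds are needed, only $p\gtrsim n$ is used, and the resulting bound is in terms of the average group size $p/M\geq\min_j|G_j|=\max_j|G_j|/W$, hence at least as strong as the paper's. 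The remaining steps (small-ball via Lemma~\ref{paley argument}, $r^*(\gamma)\lesssim\sqrt{\max_j|G_j|/n}$ via the dual-norm Gaussian maximum under $\log M\leq c_2\min_j|G_j|$, the subdifferential lower bound $\Delta(\gamma,h^*)\geq 1/2$ with the same choice of $g$, and the application of \eqref{thm main bound 1}--\eqref{thm main bound 2}) coincide with the paper's argument, and your stated high-probability events are compatible with the claimed $1-Me^{-c_3n}$. In short, your route trades the paper's bespoke net argument for a simpler averaging-plus-smallest-singular-value bound, at no loss (indeed with a slightly milder requirement on $p$ for that step), while the paper's lemma gives the group-wise infimum $\inf_v\max_i\|(\sum_j v_jX_j)_{G_i}\|_2\gtrsim\sqrt{\min_i|G_i|}$ directly, which is of independent interest.
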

As for the $\ell_1$-norm, we observe a phase transition. When $s \max_{i} |G_i| \lesssim n$ and the group sizes are comparable, the Euclidean estimation error is bounded by the average noise level. By constrast, when $n \lesssim s \max_{i} |G_i|  $ the subdifferential condition (SD) is not fulfilled and an extra term depending on $\| h^*\|_{\textnormal{GL}}$ appears.

By applying again  Proposition \ref{Prop lower bd gen} with  $L=\{h \in \mathbb{R}^p: ~h~ \textnormal{is $s$-group sparse} \}$, we obtain that the error bound in \eqref{Theorem group Lasso} is minimax optimal for adversarial errors when $W$ is constant.

Moreover, applying Theorem \ref{thm RERM}, yields similar results for the group Lasso $\hat h_\lambda$ \cite{yuan2006model} with an additional error term depending on the regularization parameter $\lambda$. 

\begin{theorem} \label{Erm groupLasso} 
Under the same conditions as in Theorem~\ref{thm groupLasso} with probability at least $1-Me^{-c_1n}$ we have for $\hat h_\lambda$, $\lambda >0$, that
\begin{align*} 
    \|\hat h_{\lambda} -h^* \|_2^2 \lesssim  \left(      W \frac{\|\xi\|_2^2}{n} +\|h^* \|_{\textnormal{GL}}^2  \frac{\max_{i \in [M]} |G_i| }{n}  +  \lambda \| h^* \|_{\textnormal{GL}}\right) .
\end{align*}
Moreover, if for some small enough constant $c_2 >0$ we have $s \max_{i \in [M]} |G_i| \leq c_2 n$, then on the same event
\begin{align*}
    \|\hat h_{\lambda} - h^* \|_2^2 \lesssim \left( W \frac{\|\xi\|_2^2}{n} + s\lambda^2  \right).
\end{align*}
\end{theorem}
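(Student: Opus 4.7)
The plan is to apply Theorem~\ref{thm RERM} with the group Lasso norm $\|\cdot\|_{\textnormal{GL}}$, reusing the four ingredients already established in the proof of Theorem~\ref{thm groupLasso}, and complementing them with an upper bound on $\bar\Delta(\gamma,h^*)$, which is the only genuinely new quantity needed.

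First, I would recall from the proof of Theorem~\ref{thm groupLasso}: the small-ball constants $(\kappa,\delta)$ are absolute since the $X_i$ are isotropic Gaussian (via Lemma~\ref{paley argument}); the local Rademacher complexity satisfies $r^*(\gamma)\asymp\sqrt{\max_{i\in[M]}|G_i|/n}$ under the condition $\log(M)\lesssim \min_{i}|G_i|$; the control of the interpolated noise norm delivered by Lemma~\ref{duality formulation} gives $\|\hat\nu\|_{\textnormal{GL}}\lesssim \sqrt{W}\,\|\xi\|_2/(r^*(\gamma)\sqrt{n})$ with high probability when $p\gtrsim n\log(n+W)$; and, provided $s\max_i |G_i|\leq c_4 n$, the subdifferential lower bound $\Delta(\gamma,h^*)\geq \zeta$ holds with some absolute $\zeta>0$.

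The new step is to upper bound $\bar\Delta(\gamma,h^*)$. Using the explicit form of the subdifferential of $\|\cdot\|_{\textnormal{GL}}$ at $h^*$, for each candidate $h$ with $\|h\|_{\textnormal{GL}}\leq 1$ and $\|\langle X,h\rangle\|_{L_2(\mu)}=r^*(\gamma)$ (which, since $\Sigma=I_p$, means $\|h\|_2=r^*(\gamma)$), I would choose $g$ by setting $g_{G_i}=h^*_{G_i}/\|h^*_{G_i}\|_2$ on the support $I$ of $h^*$, and $g_{G_i}=-h_{G_i}/\|h_{G_i}\|_2$ off the support. Then Cauchy--Schwarz on the support block and the constraint $\|h\|_2=r^*(\gamma)$ yield
\begin{equation*}
\inf_{g\in\partial(\|\cdot\|_{\textnormal{GL}})_{h^*}}\langle g,h\rangle\;\leq\;\sum_{i\in I}\|h_{G_i}\|_2\;\leq\;\sqrt{s}\,\|h\|_2\;=\;\sqrt{s}\,r^*(\gamma),
\end{equation*}
so that $\bar\Delta(\gamma,h^*)\leq \bar\zeta:=\sqrt{s}\,r^*(\gamma)$.

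Plugging these quantities into Theorem~\ref{thm RERM} gives the two claimed bounds. For the first inequality of Theorem~\ref{Erm groupLasso}, one does not need the subdifferential bounds and directly uses the first display of Theorem~\ref{thm RERM}; squaring and substituting $r^*(\gamma)^2\asymp \max_i|G_i|/n$ and $r^*(\gamma)\|\hat\nu\|_{\textnormal{GL}}\lesssim\sqrt{W}\|\xi\|_2/\sqrt{n}$ produces the three terms $W\|\xi\|_2^2/n$, $\|h^*\|_{\textnormal{GL}}^2\max_i|G_i|/n$, and $\lambda\|h^*\|_{\textnormal{GL}}$. For the improved bound under $s\max_i|G_i|\lesssim n$, I apply the second display of Theorem~\ref{thm RERM}: the additional RERM term is $\lambda\bar\zeta/r^*(\gamma)=\lambda\sqrt{s}/\zeta$, and squaring yields the $s\lambda^2$ contribution, while $r^*(\gamma)\|\hat\nu\|_{\textnormal{GL}}/\zeta$ still contributes the $W\|\xi\|_2^2/n$ term. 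The main obstacle is keeping track of the scaling constants so that the new $\bar\Delta$ bound combines cleanly with the $r^*(\gamma)$ in the denominator of the RERM error term; fortunately the two factors cancel exactly, which is what yields the clean $s\lambda^2$ rate expected of the group Lasso.
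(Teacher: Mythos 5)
Your proposal is correct and follows essentially the same route as the paper: reuse Steps 1--5 from the proof of Theorem~\ref{thm groupLasso}, add the single new ingredient $\bar\Delta(\gamma,h^*)\le\sqrt{s}\,r^*(\gamma)$ via an explicit subgradient $g$ and Cauchy--Schwarz, then invoke Theorem~\ref{thm RERM}. The only cosmetic difference is your off-support choice $g_{G_i}=-h_{G_i}/\|h_{G_i}\|_2$ (which requires handling the case $h_{G_i}=0$) versus the paper's simpler $g_{G_i}=0$, but both satisfy $\|g\|_{\textnormal{GL}}^*=1$ and give the identical bound.
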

It is possible to extend Theorem \ref{thm groupLasso} and Theorem \ref{Erm groupLasso} to features that consist of independent zero mean sub-Gaussian random variables by substituting the lower tail bounds for $\chi^2$-random variables in the proof of Lemma \ref{lemma noise interpol group Lasso} by an application Bernstein's inequality and by using Corollary 2.6. in \cite{BoucheronLugosiMassart13} to bound the local Rademacher complexity. 

\subsection{Low rank trace regression} \label{subsec trace}
We now consider a linear system of matrix equations, which is called trace regression. Hence, $\mathbb{H}$ is a matrix valued space, $\mathbb{H}=\mathbb{R}^{p_1 \times p_2}$. 
Instead of sparsity, we assume that $h^*$ has low rank, $\text{rank}(h^*)\leq s$, and, similar to before, we assume that the $X_i$'s are isotropic Gaussian matrices, $X_{ikl} \overset{i.i.d.}{\thicksim} \mathcal{N}(0,1)$.  The analogue to basis pursuit is nuclear (Schatten-1) norm minimization defined as
\begin{align} \label{def min nu-cu-ler est}
   \hat h \in \argmin_{h \in \mathbb{R}^{p_1 \times p_2}} \|h\|_{S_1} ~~~\text{subject to}~~~ \langle X_i, h \rangle =Y_i, ~i=1, \dots, n,
\end{align}
where we recall $\| h \|_{S_1} := \sum_{i=1}^{\text{rank}(h)} \sigma_i(h)$ and where $(\sigma_i(h))_i$ denote the singular values of $h$. \\
This program was proposed by \cite{FazelHindiBoyd01} and shown to recover $h^*$ in the noiseless case as long as $s \max (p_1,p_2) \lesssim n$  \cite{RechtFazelParrilo10}. Moreover, the output of deep linear neural networks trained with gradient descent on trace regression data converges to \eqref{def min nu-cu-ler est} when trained long enough  \cite{AroraCohenHuLuo19}. 

Similar to the $\ell_1$-norm promoting sparsity, the nuclear norm promotes a low-rank and the subdifferential condition (SD) is fulfilled when $s \max(p_1,p_2) \lesssim n$. 
Hence, provided that the overparameterization is large enough compared to the number of samples and that the rank is small enough, we obtain again a quantitative error bound that only depends on the average size of the errors $\xi$.  
\begin{theorem} \label{corollary matrix}
There exist positive constants $c_1,c_2$ such that if $n \log \big(n(p_2+p_1) \big)  \leq c_1p_1p_2$, then, with probability at least $1- \exp(-c_2n)$
$$
\|  \hat h - h^* \|_2^2 \lesssim  \bigg(  \frac{\|\xi\|_2^2}{ n} + \| h^*\|_{S_1}^2  \frac{p_1 + p_2}{n} \bigg) . 
$$
Moreover, if for some small enough constant $c_3>0$ we have $s \max(p_1,p_2) \leq c_3 n $, then, on the same event
\begin{equation} \label{bound error matrix 2}
\|  \hat h - h^* \|_2^2 \lesssim  \frac{\|\xi\|_2^2}{ n}. 
\end{equation}
\end{theorem}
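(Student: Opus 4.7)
The strategy is the five-step routine from after Lemma~\ref{duality formulation} with the choice $\|\cdot\|=\|\cdot\|_{S_1}$, whose dual norm is the operator norm $\|\cdot\|_{\mathrm{op}}$. Since the entries of $X_i$ are iid $\mathcal{N}(0,1)$, $\langle X, h\rangle \sim \mathcal{N}(0, \|h\|_F^2)$; this gives $L_4$-$L_2$ equivalence with an absolute constant and hence, via Lemma~\ref{paley argument}, the small-ball condition with universal $\kappa,\delta>0$. It also identifies $\|\langle X, \hat h-h^*\rangle\|_{L_2(\mu)}^2 = \|\hat h - h^*\|_F^2$, the quantity appearing on the left of both claims.

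For the local Rademacher complexity \eqref{rademacher_com}, duality gives $\sup_{\|h\|_{S_1}\leq 1}\langle G, h\rangle = \|G\|_{\mathrm{op}}$ for $G = \sum_i \varepsilon_i X_i$; since $G$ is distributed as $\sqrt{n}$ times a $p_1\times p_2$ standard Gaussian matrix, the bound $\mathbb{E}\|G\|_{\mathrm{op}}\lesssim \sqrt{n}(\sqrt{p_1}+\sqrt{p_2})$ yields $r^*(\gamma)\lesssim (\sqrt{p_1}+\sqrt{p_2})/\sqrt{n}$. To bound $\|\hat\nu\|_{S_1}$, Lemma~\ref{duality formulation} reduces matters to a uniform lower bound on $\inf_{v\in\mathcal{S}^{n-1}}\|\sum_i v_i X_i\|_{\mathrm{op}}$. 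The cheapest route combines the elementary inequality $\|M\|_{\mathrm{op}}\geq \|M\|_F/\sqrt{\min(p_1,p_2)}$ with the identity $\|\sum_i v_i X_i\|_F^2 = v^\top \Sigma_n v$, where $\Sigma_n = \sum_{j,k}X_{\cdot,jk}X_{\cdot,jk}^\top$ is an $n\times n$ Wishart matrix built from $p_1 p_2$ iid standard Gaussian vectors in $\mathbb{R}^n$; standard concentration gives $\lambda_{\min}(\Sigma_n)\gtrsim p_1 p_2$ and hence $\inf_v\|\sum_i v_i X_i\|_{\mathrm{op}}\gtrsim \sqrt{\max(p_1,p_2)}$, yielding $\|\hat\nu\|_{S_1}\lesssim \|\xi\|_2/\sqrt{\max(p_1,p_2)}$ and the crucial estimate $r^*(\gamma)\|\hat\nu\|_{S_1}\lesssim \|\xi\|_2/\sqrt{n}$.

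Plugging these into the first part of Theorem~\ref{Main general theorem} and squaring delivers the first claim, the $\|h^*\|_{S_1}^2(p_1+p_2)/n$ term coming from the $r^*(\gamma)\|h^*\|_{S_1}$ contribution. For the sharper bound \eqref{bound error matrix 2} I verify the subdifferential condition: writing $h^* = UDV^\top$ with $U\in\mathbb{R}^{p_1\times s}$, $V\in\mathbb{R}^{p_2\times s}$, and decomposing any $h\in H_{r,\gamma}$ as $h = P_T h + P_{T^\perp}h$ where $T = \{UA^\top + BV^\top\}$ is the tangent space, I take $g = UV^\top + W \in \partial\|\cdot\|_{S_1}(h^*)$ with $W$ the $UV^\top$-analogue of the sign of $P_{T^\perp}h$ (i.e.\ $W = U_0 V_0^\top$ for $P_{T^\perp}h = U_0 D_0 V_0^\top$ the SVD). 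Exploiting $UV^\top\in T$, $W\in T^\perp$ gives
\begin{equation*}
\langle g, h\rangle = \langle UV^\top, P_T h\rangle + \|P_{T^\perp}h\|_{S_1} \geq 1 - \|P_T h\|_{S_1} - \sqrt{s}\,\|P_T h\|_F \geq 1 - C\sqrt{s}\,r^*(\gamma),
\end{equation*}
where I used $\|h\|_{S_1}=1$, $\mathrm{rank}(P_T h)\leq 2s$ (so $\|P_T h\|_{S_1}\leq\sqrt{2s}\|P_T h\|_F$) and $\|P_T h\|_F\leq r^*(\gamma)$. The hypothesis $s\max(p_1,p_2)\lesssim n$ makes this at least $1/2$, so the second part of Theorem~\ref{Main general theorem} with $\zeta = 1/2$ yields \eqref{bound error matrix 2}.

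The main technical obstacle is the uniform lower bound on $\inf_v\|\sum_i v_i X_i\|_{\mathrm{op}}$ in Step~4: pointwise Gaussian concentration of the operator norm is classical, but making it uniform over the $n$-sphere requires a net argument whose cardinality $\sim e^{cn}$ must be absorbed by the corresponding Wishart-type tail, and it is precisely this interplay that produces the overparameterization condition $n\log(n(p_1+p_2))\lesssim p_1 p_2$ stated in the theorem.
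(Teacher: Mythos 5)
Your proposal is correct and follows the paper's own five-step routine (small ball, $r^*(\gamma)\lesssim\sqrt{\max(p_1,p_2)/n}$ via $\mathbb{E}\|G\|_{\mathrm{op}}$, Lemma~\ref{duality formulation}, then Theorem~\ref{Main general theorem}), but it diverges from the paper in the two technical steps. For Step~4 the paper proves Lemma~\ref{Lemma lower bound spec} by a pointwise lower tail for $\|G\|_{\mathrm{op}}$ (via the maximal column norm and $\chi^2$ lower tails) combined with an $\epsilon$-net over $\mathcal{S}^{n-1}$ and operator-norm control of the individual $X_i$; you instead use $\|M\|_{\mathrm{op}}\geq\|M\|_{\mathrm F}/\sqrt{\min(p_1,p_2)}$ together with $\inf_{v\in\mathcal S^{n-1}}\|\sum_i v_iX_i\|_{\mathrm F}^2=\lambda_{\min}(\Sigma_n)$ for the $n\times n$ Wishart matrix $\Sigma_n$ built from the $p_1p_2$ i.i.d.\ columns $X_{\cdot jk}\sim\mathcal N(0,I_n)$, which gives the same lower bound $\gtrsim\sqrt{\max(p_1,p_2)}$ with probability $1-e^{-cp_1p_2}$. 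This is valid and in fact cleaner: the uniformity over $v$ is automatic, no net is required, and only $n\lesssim p_1p_2$ is used, so your closing remark that the net argument is the main obstacle and is what ``produces'' the condition $n\log(n(p_1+p_2))\lesssim p_1p_2$ applies to the paper's proof, not to your own route (which works under the stated, stronger hypothesis and would even tolerate dropping the logarithm). For Step~5 the paper simply invokes Lemma~4.4 of \cite{lecue2018regularization}, whereas you verify $\Delta(\gamma,h^*)\geq 1/2$ directly by the standard decomposability argument $g=UV^\top+W$ with $W$ the polar factor of $P_{T^\perp}h$; your computation ($\|g\|_{\mathrm{op}}=1$, $\langle g,h^*\rangle=\|h^*\|_{S_1}$, $\langle g,h\rangle\geq 1-(\sqrt{2s}+\sqrt s)r^*(\gamma)$ using $\mathrm{rank}(P_Th)\leq 2s$ and $\|P_Th\|_{\mathrm F}\leq\|h\|_{\mathrm F}\leq r^*(\gamma)$) is correct and matches the cited lemma up to constants, and the choice of $g$ depending on $h$ is allowed since the supremum over $g$ sits inside the infimum in \eqref{def subdiff cond}. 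The conclusion via the two parts of Theorem~\ref{Main general theorem}, with $r^*(\gamma)\|\hat\nu\|_{S_1}\lesssim\|\xi\|_2/\sqrt n$ and $\zeta=1/2$, is exactly as in the paper.
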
 \label{thm low rank upper}
We observe again a phase transition. When $ s \max(p_1, p_2) \lesssim  n$, the subdifferential condition (SD) is fulfilled and the Euclidean estimation error is bounded by the average squared noise in the overparameterized regime. By contrast, when $n \lesssim s \max(p_1, p_2)$  an extra term depending on $\| h^*\|_{S_1}$ appears. 

Applying Proposition \ref{Prop lower bd gen} with $L=\{h \in \mathbb{R}^{p_1 \times p_2}: ~\text{rank}(h) \leq s\}$, we obtain that the bound in \eqref{bound error matrix 2} is minimax optimal against adversarial errors for the class of rank $s$ matrices, $s \geq 1$. 

Moreover, applying Theorem \ref{thm RERM}, we obtain similar results for the RERM, the matrix Lasso, with an additional error term depending on $\lambda$.

\begin{theorem} \label{erm trace regression}
Under the same conditions as in Theorem~\ref{corollary matrix}, we have for $\hat h_\lambda$, $\lambda >0$, that with probability at least $1- \exp(-c_1n)$
$$
\|  \hat h_{\lambda} - h^* \|_2^2 \lesssim \bigg(  \frac{\|\xi\|_2^2}{ n} +\| h^*\|_{S_1}^2 \frac{p_1 + p_2}{n} + \lambda \| h^* \|_{S_1} \bigg) . 
$$
Moreover, if for some small enough constant $c_2>0$ we have $s \max(p_1,p_2) \leq c_2 n $, then, on the same event
\begin{equation*} 
\|  \hat h_\lambda - h^* \|_2^2 \lesssim \left( \frac{\|\xi\|_2^2}{ n} +  s \lambda^2  \right) . 
\end{equation*}
\end{theorem}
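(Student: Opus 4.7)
The plan is to instantiate Theorem \ref{thm RERM} with $\|\cdot\| = \|\cdot\|_{S_1}$. Three of the four required ingredients --- the small-ball constants, the local Rademacher complexity $r^*(\gamma)$, and the bound on $\|\hat\nu\|_{S_1}$ --- together with the lower subdifferential bound $\Delta(\gamma, h^*) \geq \zeta$, are already produced in the proof of Theorem \ref{corollary matrix}. The one new quantity to supply is the upper subdifferential bound $\bar\Delta(\gamma, h^*) \leq \bar\zeta$, after which the result follows by plugging everything into Theorem \ref{thm RERM} and squaring.

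Briefly recalling the recycled ingredients: for isotropic Gaussian $X$ one has $\langle X, h\rangle \sim \mathcal{N}(0, \|h\|_2^2)$, so (SB) holds with absolute constants $\kappa, \delta$ via Lemma \ref{paley argument}. By duality, the Rademacher complexity of the nuclear-norm ball is controlled by $\mathbb{E}\|\sum_i \varepsilon_i X_i\|_{\textnormal{op}} \lesssim \sqrt{n(p_1+p_2)}$, yielding $r^*(\gamma) \asymp \sqrt{(p_1+p_2)/n}$. Lemma \ref{duality formulation} reduces $\|\hat\nu\|_{S_1}$ to lower bounding $\inf_{v \in \mathcal{S}^{n-1}} \|\sum_i v_i X_i\|_{\textnormal{op}}$: for each fixed $v$ this sum is a Gaussian matrix with i.i.d.\ $\mathcal{N}(0,1)$ entries and operator norm concentrated near $\sqrt{p_1}+\sqrt{p_2}$, and a covering argument over $\mathcal{S}^{n-1}$ upgrades this to a uniform bound under $n\log(n(p_1+p_2)) \lesssim p_1 p_2$. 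Hence $\|\hat\nu\|_{S_1} \lesssim \|\xi\|_2/\sqrt{p_1+p_2}$ and $r^*(\gamma)\|\hat\nu\|_{S_1} \lesssim \|\xi\|_2/\sqrt n$. The lower bound $\Delta(\gamma, h^*) \geq \zeta$ for an absolute constant $\zeta$, under $s \max(p_1,p_2) \leq cn$, is identical to the one used for the interpolator.

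For the new ingredient, write the thin SVD $h^* = U_* \Sigma_* V_*^T$ with $U_* \in \mathbb{R}^{p_1 \times s}$, $V_* \in \mathbb{R}^{p_2 \times s}$, and recall
\begin{equation*}
\partial(\|\cdot\|_{S_1})_{h^*} = \{U_* V_*^T + W : U_*^T W = 0,~W V_* = 0,~\|W\|_{\textnormal{op}} \leq 1\}.
\end{equation*}
For any $h \in \bar H_{r,\gamma}$, decompose $h = h_\parallel + h_\perp$ with $h_\parallel$ in the tangent space $\mathcal{T} = \{U_* A^T + B V_*^T\}$ and $h_\perp \in \mathcal{T}^\perp$, and pick $W = -U_\perp V_\perp^T$ from the SVD $h_\perp = U_\perp \Sigma_\perp V_\perp^T$. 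Then $W \in \mathcal{T}^\perp$ with $\|W\|_{\textnormal{op}} \leq 1$, so $g := U_* V_*^T + W \in \partial(\|\cdot\|_{S_1})_{h^*}$, and
\begin{equation*}
\langle g, h\rangle = \langle U_* V_*^T, h_\parallel\rangle - \|h_\perp\|_{S_1} \leq \|U_* V_*^T\|_2 \, \|h\|_2 = \sqrt{s}\, r^*(\gamma),
\end{equation*}
using $\|U_* V_*^T\|_2 = \sqrt s$ and the fact that for isotropic Gaussian design $\|h\|_2 = \|\langle X,h\rangle\|_{L_2(\mu)} = r^*(\gamma)$ on $\bar H_{r,\gamma}$. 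Thus $\bar\Delta(\gamma, h^*) \leq \bar\zeta$ with $\bar\zeta \asymp \sqrt{s}\, r^*(\gamma)$, so $\lambda\bar\zeta/r^*(\gamma) \lesssim \sqrt{s}\,\lambda$.

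Substituting into Theorem \ref{thm RERM} and squaring, the first inequality directly produces the first bound of Theorem \ref{erm trace regression}: the $\sqrt{\lambda\|h^*\|_{S_1}}$ summand contributes $\lambda\|h^*\|_{S_1}$ and $r^*(\gamma)^2\|h^*\|_{S_1}^2$ contributes $\|h^*\|_{S_1}^2(p_1+p_2)/n$, while the $r^*(\gamma)\|\hat\nu\|_{S_1}$ term is absorbed into $\|\xi\|_2^2/n$. The second inequality, under $s\max(p_1,p_2)\leq c n$ so that both subdifferential conditions hold, yields $\|\xi\|_2^2/n + s\lambda^2$, since $(\lambda\bar\zeta/r^*(\gamma))^2 \asymp s\lambda^2$. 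The one genuinely new calculation is the $\bar\Delta$ bound above, whose main subtlety lies in choosing $W$ adaptively to $h$ so as to cancel the $\|h_\perp\|_{S_1}$ contribution; any nonadaptive choice of $g$ would blow $\bar\zeta$ up and degrade the $\lambda$-dependence below the desired $s\lambda^2$ rate.
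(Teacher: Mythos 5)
Your proposal is correct and follows the paper's route: reuse the small-ball constants, the bound $r^*(\gamma)\lesssim\sqrt{\max(p_1,p_2)/n}$, the control of $\|\hat\nu\|_{S_1}$ via Lemma~\ref{duality formulation} and Lemma~\ref{Lemma lower bound spec}, and the lower bound on $\Delta(\gamma,h^*)$ from the proof of Theorem~\ref{corollary matrix}, then feed everything into Theorem~\ref{thm RERM}; the only new work is the upper bound $\bar\Delta(\gamma,h^*)\lesssim\sqrt{s}\,r^*(\gamma)$, which you establish correctly.

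One remark on that last step: your construction (adaptive $g=U_*V_*^T+W$ with $W=-U_\perp V_\perp^T$ chosen to cancel $\|h_\perp\|_{S_1}$) is valid, but your closing claim that ``any nonadaptive choice of $g$ would blow $\bar\zeta$ up'' is wrong. The paper uses exactly the nonadaptive subgradient $g=UV^T$ (i.e.\ $W=0$) for all $h$ simultaneously, via the identity $\langle UV^T,h\rangle=\langle UV^T,\,UU^ThVV^T\rangle$ together with $\mathrm{rank}(UU^ThVV^T)\leq s$ and the nuclear--Frobenius comparison, obtaining the same bound $\sqrt{s}\,r^*(\gamma)$; indeed plain Cauchy--Schwarz already gives $\langle UV^T,h\rangle\leq\|UV^T\|_2\|h\|_2=\sqrt{s}\,r^*(\gamma)$. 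In your own computation the $W$ term only subtracts the nonnegative quantity $\|h_\perp\|_{S_1}$, so the bound you need follows from the $h_\parallel$ part alone --- the adaptivity is harmless but unnecessary, and the $\inf_g$ in the definition \eqref{variant SE} in any case permits (but does not require) choosing $g$ per $h$. This does not affect the validity of your proof.
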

It is again possible to extend Theorem \ref{corollary matrix} and Theorem \ref{erm trace regression} to features that consist of independent zero mean sub-Gaussian random variables by substituting the lower tail bounds for $\chi^2$-random variables in the proof of Lemma \ref{Lemma lower bound spec} by an application Bernstein's inequality and by using standard  bounds for the expected spectral norm of sub-Gaussian random matrices to bound the local Rademacher complexity \cite{Vershynin12}.

\subsection{Linear model with Euclidean norm} \label{subsec regression ell2}

We now consider linear regression with Euclidean norm penalty. Here  $\mathbb{H}=\mathbb{R}^p$, but we remark that the results can be  extended to an infinite dimensional Hilbert space $\mathbb H$. Contrary to before, we do not assume low-dimensional intrinsic structure of $h^*$, but instead that the covariance matrix $\Sigma$ of the $X_i$'s has a decay structure as in ~\cite{BartlettLongLugosiTsigler20,ChinotLerasle20}. We assume that the $X_i$ are independent Gaussian random variables with covariance matrix $\Sigma$. The minimum Euclidean norm interpolating estimator is defined as
\begin{equation} \label{def mini euclidean}
    \hat h = \argmin_{ h \in \mathbb{H}} \|h \|_2 ~~~~~~\textnormal{subject to}~~~\langle X_i, h \rangle = Y_i, ~~i=1, \dots, n.
\end{equation}
For $i \in [p]$, we denote $\lambda_i(\Sigma)$ the $i$-th eigenvalue of $\Sigma$,  ordered in decreasing order  $\lambda_1(\Sigma) \geq \lambda_2(\Sigma) \geq \cdots \geq  \lambda_p(\Sigma)$, and for any $k$ in $[p]$ we define $r_k(\Sigma) = \sum_{i=k}^p \lambda_i(\Sigma)$. When applying Theorem \ref{Main general theorem} to this setting, we exactly recover the result of Theorem 1 in~\cite{ChinotLerasle20} showing the general applicability of Theorem~\ref{Main general theorem}.
\begin{theorem}[Corollary 1~\cite{ChinotLerasle20}] \label{corollary euclidean}
With the convention $\inf \emptyset = + \infty $, we define
$$
k^* = \inf \bigg\{k \in [p]: \frac{r_k(\Sigma)}{\lambda_k(\Sigma)} >c_1n \bigg\},
$$ 
for some constant $c_1>0$. Assume that for some constant $c_2 >0$ it holds that $k^* \leq c_2 n$. Then there exist positive constants $c_3,c_4$ such that with probability at least $1- \exp(-c_3n)$ the estimator $\hat h$ defined in \eqref{def mini euclidean} satisfies
\begin{equation} \label{bound lin reg l2}
    \| \Sigma^{1/2} (\hat h - h^*) \|_2^2 \lesssim  \frac{\| \xi \|_2^2}{n} +  \| h^* \|_2^2 \frac{ r_{c_4n}(\Sigma)}{n}   .
\end{equation}
\end{theorem}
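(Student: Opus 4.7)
The plan is to invoke the first bound \eqref{thm main bound 1} of Theorem \ref{Main general theorem} with interpolation norm $\|\cdot\| = \|\cdot\|_2$, following the five-step routine of Section \ref{subsec gen theory}. Since $\|\cdot\|_2$ is differentiable, the subdifferential at any $h^* \neq 0$ is the singleton $\{h^*/\|h^*\|_2\}$ and no nontrivial lower bound on $\Delta(\gamma, h^*)$ is available, so the sharper bound \eqref{thm main bound 2} is not applicable and it suffices to estimate $r^*(\gamma)$ and $\|\hat\nu\|_2$ under the spectral assumption on $\Sigma$.

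For the small-ball step, $\langle X, h\rangle \sim \mathcal{N}(0, \|\Sigma^{1/2}h\|_2^2)$ is Gaussian, so the $L_4$--$L_2$ norm equivalence holds with an absolute constant and Lemma \ref{paley argument} delivers SB with absolute $\kappa,\delta>0$. For the local Rademacher complexity, symmetrization and Rademacher-to-Gaussian comparison (using that the $X_i$ are themselves Gaussian) reduce \eqref{rademacher_com} to estimating $\sqrt n \cdot \mathbb{E}\sup_{\|h\|_2 \leq 1,\, \|\Sigma^{1/2}h\|_2 \leq r}\langle \Sigma^{1/2}G, h\rangle$ with $G \sim \mathcal{N}(0, I_p)$. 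Writing $h$ in the eigenbasis of $\Sigma$ and splitting at index $k^*$, the head part $(i \leq k^*)$ is controlled by the ellipsoidal constraint $\|\Sigma^{1/2}h\|_2 \leq r$ yielding $\lesssim r\sqrt{k^*}$, while the tail $(i > k^*)$ is controlled by $\|h\|_2 \leq 1$ yielding $\lesssim \sqrt{r_{k^*}(\Sigma)}$, both via Cauchy--Schwarz and standard $\chi^2$ concentration. Since $k^* \leq c_2 n$ with $c_2$ sufficiently small, the fixed-point condition then gives $r^*(\gamma) \lesssim \sqrt{r_{k^*}(\Sigma)/n}$.

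For the interpolated noise, self-duality of $\|\cdot\|_2$ and Lemma \ref{duality formulation} yield
\[
\|\hat\nu\|_2 \;\leq\; \frac{\|\xi\|_2}{\inf_{v \in \mathcal{S}^{n-1}}\big\|\textstyle\sum_{i=1}^n v_i X_i\big\|_2} \;=\; \frac{\|\xi\|_2}{\sqrt{\lambda_{\min}(\mathbb{X}\mathbb{X}^\top)}},
\]
where $\mathbb{X} \in \mathbb{R}^{n\times p}$ is the design matrix. Writing $\mathbb{X} = G \Sigma^{1/2}$ with $G$ having i.i.d.\ standard Gaussian entries, the \textbf{main obstacle} is to show that under the spectral assumption $r_{k^*}(\Sigma)/\lambda_{k^*}(\Sigma) > c_1 n$ and $k^* \leq c_2 n$ one has $\lambda_{\min}(G\Sigma G^\top) \gtrsim r_{k^*}(\Sigma)$ with probability at least $1-e^{-c_3 n}$. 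I would follow the decomposition $\Sigma = \Sigma_{\leq k^*} + \Sigma_{> k^*}$ used in \cite{BartlettLongLugosiTsigler20, ChinotLerasle20}: the head block $G\Sigma_{\leq k^*}G^\top$ is positive semidefinite (and hence only helps), while the tail block $G\Sigma_{>k^*}G^\top$ has effective rank $r_{k^*}(\Sigma)/\lambda_{k^*}(\Sigma) \gg n$, so by Hanson--Wright / $\chi^2$-concentration it is close to $r_{k^*}(\Sigma)\, I_n$ in operator norm, up to a multiplicative constant.

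Combining these estimates in \eqref{thm main bound 1} gives
\[
r^*(\gamma)\|\hat\nu\|_2 \lesssim \sqrt{\tfrac{r_{k^*}(\Sigma)}{n}} \cdot \frac{\|\xi\|_2}{\sqrt{r_{k^*}(\Sigma)}} = \frac{\|\xi\|_2}{\sqrt n}, \qquad r^*(\gamma)\|h^*\|_2 \lesssim \|h^*\|_2 \sqrt{\tfrac{r_{k^*}(\Sigma)}{n}},
\]
so $\|\Sigma^{1/2}(\hat h - h^*)\|_2^2 \lesssim \|\xi\|_2^2/n + \|h^*\|_2^2\, r_{k^*}(\Sigma)/n$. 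A final routine check verifies that for an appropriate constant $c_4$ with $c_2 \leq c_4$ and $c_4$ small relative to $c_1$, the definition of $k^*$ and the assumption $k^* \leq c_2 n$ imply $r_{k^*}(\Sigma) \asymp r_{c_4 n}(\Sigma)$ (the two differ by at most $c_4 n$ eigenvalues each bounded by $r_{k^*}(\Sigma)/(c_1 n)$), which yields \eqref{bound lin reg l2}.
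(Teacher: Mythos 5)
Your proof is correct and follows exactly the route the paper intends: the paper does not spell out a proof for this result (it simply asserts that applying Theorem~\ref{Main general theorem} recovers Corollary~1 of \cite{ChinotLerasle20}), and you have filled in the omitted five-step routine in the natural way. Your estimate $r^*(\gamma)\lesssim\sqrt{r_{k^*}(\Sigma)/n}$ via the head/tail split at $k^*$, the bound $\|\hat\nu\|_2\leq\|\xi\|_2/\sqrt{\lambda_{\min}(\mathbb{X}\mathbb{X}^\top)}$ with the PSD head plus high-effective-rank tail decomposition to control $\lambda_{\min}$, and the closing observation that $r_{k^*}(\Sigma)\asymp r_{c_4n}(\Sigma)$ for $c_4<c_1$ are all sound and match the arguments of \cite{BartlettLongLugosiTsigler20,ChinotLerasle20} that the paper implicitly invokes.
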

Contrary to Theorems~\ref{thm cs}, \ref{thm groupLasso} and~\ref{corollary matrix}, there is no phase transition and the upper bound in Theorem~\ref{corollary euclidean} always has an extra term depending on the Euclidean norm of $h^*$. This extra term comes from the fact that the subdifferential condition (SD) is not satisfied and only the bound \eqref{thm main bound 1} of Theorem~\ref{Main general theorem} can be applied. This extra term is likely to be unavoidable since in the noise-free setting exact recovery is impossible without further intrinsic low-dimensional structure of $h^*$.

When $\|h^*\|_2^2 r_{c_3 n}(\Sigma) \lesssim \|\xi\|_2^2$, the first term in \eqref{bound lin reg l2} dominates. Assuming additionally that $r_{c_3 n}(\Sigma) \lesssim n$, we can apply Proposition \ref{Prop lower bd gen} with $L=\{h \in \mathbb{R}^p: ~\|h\|_2^2 r_{c_3 n}(\Sigma) \leq  c \|\xi\|_2^2\}$ for some constant $c>0$ and obtain that the bound \eqref{bound lin reg l2} is for this class minimax optimal against adversarial errors. 
A stronger, pointwise in $h^*$ and $\xi$ lower bound  holding with high probability is established in~\cite{ChinotLerasle20}, showing also that the term $\| \xi \|_2 / \sqrt n$ cannot be improved. 

Applying Theorem \ref{thm RERM} to the ridge estimator $\hat h_\lambda$, $\lambda >0$, it is also possible to obtain a similar bound as in \eqref{bound lin reg l2} with an additional additive term of order ${\lambda \|h^*\|_2}$ in the upper bound.

\section{Discussion and concluding remarks} \label{subsec concl}
We have given a unified perspective on the performance of minimum norm interpolating algorithms and regularized empirical risk minimizers (RERM) in regression type problems with adversarial errors. When the norm induces low-dimensional intrinsic structure, minimum norm interpolators, despite interpolating the data, were shown to achieve the same convergence rates as state of the art regularized  algorithms \cite{CandesRombergTao06} for worst case adversarial errors. In the basis pursuit case this gives an alternative, more general and  constructive proof of the stability result of \cite{Wojtaszczyk10}. Similarly, RERM achieves optimal convergence rates for worst case adversarial errors as long as the regularization parameter is picked small enough.

Compared to RERM and other regularized algorithms that are robust to adversarial errors such as regularized basis pursuit \cite{BrugiapagliaAdcock18}, we conclude that interpolating algorithms have the  benefit that they do not require  careful choice of tuning parameters. Instead, their performance automatically adapts to the size of the errors, whereas RERM and regularized basis pursuit require that tuning parameters are chosen small enough in order to achieve optimal performance.  

However, in the presence of low-dimensional intrinsic structure and well-behaved noise that is symmetric, sub-Gaussian and independent of the $X_i$ and $h^*$, RERM 
with optimal, large enough, tuning parameter achieves a much smaller prediction error than minimum norm interpolated estimators \cite{lecue2018regularization}. For example, for sparse linear regression with i.i.d. standard Gaussian design and Gaussian noise with variance $\sigma^2$, RERM achieves a prediction error  of order $\sigma^2 s\log(p/s)/n$ versus $\sigma^2$ for the minimum $\ell_1$-norm interpolator. This gap is not due to our proof techniques but is inherent to the nature of using an interpolating solution as shown in~\cite{MuthukumarVodrahalliSubramanianSahai20}.

Below we present some open problems and possible extensions of our theory. \\

\noindent \textbf{Bounds for $\|\hat \nu\|$ and $\inf_{v \in \mathcal{S}^{n-1}}\| \sum v_i X_i \|^*$.} It would be interesting to obtain general bounds for $\|\hat \nu\|$ and $\inf_{v \in \mathcal{S}^{n-1}}\| \sum v_i X_i \|^*$, even if one imposes a Gaussianity assumption on the $X_i$. In our examples we had to rely on a slightly different technique in each case to obtain a lower bound and it is not clear whether a general technique for arbitrary norms $\|\cdot\|^*$ is available. For the particular case of $\|\cdot\|=\|\cdot\|_1$ bounds for $\|\hat \nu\|$ and $\inf_{v \in \mathcal{S}^{n-1}}\| \sum v_i X_i \|^*$ are closely connected to the geometry of random polytopes and were studied under the name $\ell_1$-quotient property \cite{PanelaLitvakaPajorRudelsonJaegermann05,DevorePetrovaWojtaszczyk09,Wojtaszczyk10,KrahmerKuemmerleRauhut18,GuedonKrahmerKuemmerleMendelsonRauhut19}. It would be highly interesting to investigate which geometrical structures are the correct analagons for other norms and how to study their geometry. \\

\noindent \textbf{Matrix completion:} We expect that it should be possible to extend our results to noisy matrix completion. In particular,  it would be necessary to consider sampling without replacement as in \cite{CandesTao10}. This does not fit exactly in our framework, as we assume in Theorem \ref{Main general theorem} that the $X_i$ are independent. When sampling with replacement (e.g. \cite{Recht}), there is at least one entry that is sampled twice with high probability, leading to the non-existence of a solution of the minimal interpolating nuclear norm objective \eqref{def min nu-cu-ler est} in the presence of errors. \\

 \noindent \textbf{Lower bounds:} Proposition \ref{Prop lower bd gen} is a worst case result for worst case type, adversarial errors. It would be highly interesting to derive lower bounds for minimum norm interpolated estimators that hold in more optimistic scenarios, for instance when the $\xi_i$'s are independent Gaussians with mean zero. In~\cite{MuthukumarVodrahalliSubramanianSahai20}, the authors consider sparse linear regression with minimum $\ell_1$-norm interpolation, basis pursuit. They show that when $h^* =0$, $X_i \overset{i.i.d.}{\thicksim} \mathcal{N}(0, I_p)$ and $\xi_i \overset{i.i.d.}{\thicksim} \mathcal{N}(0,\sigma^2)$, basis pursuit fulfills with high probability $$\frac{\|\xi\|_2^2}{ n\log(p/n)} \lesssim \| \hat h  \|_2^2  .$$ 

 Hence, in this case the bound \eqref{bound cs master} in Theorem \ref{thm cs} is optimal up to a logarithmic factor. We conjecture that this logarithmic gap does not exist and that  $\| \xi \|_2^2 /n$ is the correct lower bound
 and we leave this open as an important question for future research. 
\section{Proofs} \label{sec proofs} 

\subsection{Proof of Theorem~\ref{Main general theorem}}
\begin{proof}
For $\gamma >0$ we define
$$
\mathcal C_{r^*(\gamma)} = \{ h \in \mathbb H: \| \langle X, h \rangle \|_{L_2(\mu)} \geq r^*(\gamma) \| h \| \}.
$$
The proof follows from the following two propositions:

\begin{proposition} \label{thm_small_ball_gen}
Assume that the small-ball assumption holds with $\kappa,\delta >0$. Then with probability at least $1 -2\exp(-\delta^2 n /16)$, $\hat h$ satisfies
$$
\mathbf 1 \left (\hat h- h^* \in C_{r^*(\delta \kappa/32)} \right  ) \| \langle X, \hat h - h^* \rangle \|_{L_2(\mu)}^2 \leq \frac{8}{\delta \kappa^2 } \frac{\| \xi \|_2^2}{  n}   .
$$
\end{proposition}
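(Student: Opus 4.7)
The plan is to combine the interpolation identity satisfied by $\hat h$ with a uniform small-ball lower bound on the empirical quadratic process over the cone $\mathcal C_{r^*(\gamma)}$, with $\gamma = \kappa\delta/32$. Since $\hat h$ interpolates, $\langle X_i, \hat h - h^*\rangle = \xi_i$ for every $i$, so
\[
\frac{1}{n}\sum_{i=1}^n \langle X_i, \hat h - h^*\rangle^2 \;=\; \frac{\|\xi\|_2^2}{n}.
\]
It therefore suffices to prove that, on an event of probability at least $1 - 2\exp(-\delta^2 n/16)$,
\[
\frac{1}{n}\sum_{i=1}^n \langle X_i, h\rangle^2 \;\geq\; \frac{\delta\kappa^2}{8}\,\|\langle X, h\rangle\|_{L_2(\mu)}^2 \qquad \text{for every } h \in \mathcal C_{r^*(\gamma)}.
\]
Applying this with $h = \hat h - h^*$ on the event $\{\hat h - h^* \in \mathcal C_{r^*(\gamma)}\}$ and rearranging gives the claim; off this event the indicator vanishes and the bound is trivial.

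For the uniform quadratic lower bound I would use the small-ball method of \cite{mendelson2014learning,koltchinskii2015bounding}. By homogeneity, the inequality need only be checked on the set $V = \{h \in \mathbb H : \|\langle X, h\rangle\|_{L_2(\mu)} = 1,\; \|h\| \leq 1/r^*(\gamma)\}$. Bound $\langle X_i, h\rangle^2$ from below by $(\kappa/2)^2 \mathbf 1\{|\langle X_i, h\rangle| \geq \kappa/2\}$ and then replace the indicator by a $(2/\kappa)$-Lipschitz cut-off $\phi : \mathbb R \to [0,1]$ with $\phi(0) = 0$, $\phi \leq \mathbf 1\{|\cdot| \geq \kappa/2\}$, and $\phi \geq \mathbf 1\{|\cdot| \geq \kappa\}$. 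The small-ball assumption \eqref{small_ball} then yields $\mathbb E\,\phi(\langle X, h\rangle) \geq \mathbb P(|\langle X, h\rangle| \geq \kappa) \geq \delta$ for every $h \in V$, so what remains is to control the uniform deviation of the empirical mean of $\phi$ from its expectation.

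For this deviation, symmetrization followed by Talagrand's contraction principle (using that $\phi$ is $(2/\kappa)$-Lipschitz and vanishes at zero) reduces the expected supremum to $\tfrac{8}{n\kappa}\,\mathbb E \sup_{h \in V}\sum_{i=1}^n \varepsilon_i \langle X_i, h\rangle$. Rescaling $h \mapsto r^*(\gamma) h$ maps $V$ into the unit $\|\cdot\|$-ball at $L_2(\mu)$-radius $r^*(\gamma)$, so the defining inequality \eqref{rademacher_com} bounds the above Rademacher supremum by $\gamma n$. The expected uniform deviation is then at most $8\gamma/\kappa = \delta/4$, using the choice $\gamma = \delta\kappa/32$. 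A bounded-differences inequality---each $\phi(\langle X_i, h\rangle)\in [0,1]$, so changing one sample perturbs the supremum by at most $1/n$---concentrates this supremum around its mean, and an additional deviation of $\delta/4$ fails with probability at most $2\exp(-\delta^2 n/8) \leq 2\exp(-\delta^2 n/16)$. On the resulting event, $\tfrac{1}{n}\sum_i \phi(\langle X_i, h\rangle) \geq \delta/2$ uniformly on $V$, and multiplying through by $(\kappa/2)^2\|\langle X, h\rangle\|_{L_2(\mu)}^2$ produces the quadratic lower bound with the desired constant $\delta\kappa^2/8$.

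The main obstacle is simply the bookkeeping of constants: aligning the threshold in the Lipschitz cut-off, the Lipschitz constant, the Rademacher budget $\gamma$, and the McDiarmid rate so as to land exactly on the constants $\delta\kappa^2/8$ and $2\exp(-\delta^2 n/16)$ claimed in the statement. Each of the ingredients---the small-ball thresholding trick, symmetrization, Talagrand's contraction principle, and the bounded-differences inequality---is standard and poses no conceptual difficulty.
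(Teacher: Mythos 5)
Your proposal is correct and follows essentially the same route as the paper's proof: interpolation identity, reduction by homogeneity to a normalized set, small-ball thresholding with a Lipschitz cut-off, symmetrization plus contraction to reach the Rademacher complexity $r^*(\gamma)$, and the bounded-differences inequality for concentration. The only cosmetic difference is that you normalize to $\|\langle X, h\rangle\|_{L_2(\mu)} = 1$ while the paper normalizes to $\|\langle X, \tilde h\rangle\|_{L_2(\mu)} = r^*(\gamma)$, and your McDiarmid step yields the slightly sharper $2\exp(-\delta^2 n/8)$ which you then relax to match the claimed $2\exp(-\delta^2 n/16)$.
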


\begin{proposition} \label{Sparsity theorem}

We have that 
$$
\mathbf 1 \left (\hat h- h^*  \notin C_{r^*(\delta \kappa/32)} \right ) \| \langle X, \hat h - h^* \rangle \|_{L_2(\mu)}  \leq r^*(\delta \kappa/32) \big( 2 \|h^* \| + \| \hat \nu \| \big). 
$$
Moreover, if there exists $\zeta > 0$ such that $\Delta(\delta \kappa/32, h^* ) \geq \zeta$, then 
$$
\mathbf 1 \left (\hat h- h^* \notin C_{r^*(\delta \kappa/32)}\right  ) \| \langle X, \hat h - h^* \rangle \|_{L_2(\mu)}  \leq r^*(\delta \kappa/32) \frac{\| \hat \nu \|}{\zeta}. 
$$
\end{proposition}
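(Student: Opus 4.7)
The plan is to exploit the interpolating nature of $\hat h$ together with the minimality of $\|\hat h\|$. Observe that by the definition \eqref{interpole noise} of $\hat\nu$, the element $h^* + \hat\nu$ also interpolates the data, since $\langle X_i, h^* + \hat\nu\rangle = \langle X_i, h^*\rangle + \xi_i = Y_i$ for every $i$. Because $\hat h$ is the minimum $\|\cdot\|$-norm interpolator, this yields $\|\hat h\| \leq \|h^* + \hat\nu\| \leq \|h^*\| + \|\hat\nu\|$. Combining with the triangle inequality gives $\|\hat h - h^*\| \leq \|\hat h\| + \|h^*\| \leq 2\|h^*\| + \|\hat\nu\|$.

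For the first assertion, note that by the definition of $\mathcal C_{r^*(\gamma)}$ the event $\{\hat h - h^* \notin \mathcal C_{r^*(\gamma)}\}$ implies $\|\langle X, \hat h - h^*\rangle\|_{L_2(\mu)} \leq r^*(\gamma)\|\hat h - h^*\|$. Plugging the bound on $\|\hat h - h^*\|$ derived above yields the first inequality of the proposition.

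For the improved bound under $\Delta(\gamma, h^*) \geq \zeta$, I would apply the subgradient inequality: for every $g \in \partial(\|\cdot\|)_{h^*}$,
$$\|\hat h\| \geq \|h^*\| + \langle g, \hat h - h^*\rangle,$$
so that $\langle g, \hat h - h^*\rangle \leq \|\hat h\| - \|h^*\| \leq \|\hat\nu\|$ by the first step. Taking the supremum over $g \in \partial(\|\cdot\|)_{h^*}$ gives
$$\sup_{g \in \partial(\|\cdot\|)_{h^*}} \langle g, \hat h - h^*\rangle \leq \|\hat\nu\|.$$
Assuming $\hat h \neq h^*$ (the other case is trivial) and $\hat h - h^* \notin \mathcal C_{r^*(\gamma)}$, the normalized direction $h := (\hat h - h^*)/\|\hat h - h^*\|$ satisfies $\|h\| = 1$ and $\|\langle X, h\rangle\|_{L_2(\mu)} \leq r^*(\gamma)$, so $h \in H_{r,\gamma}$. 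The subdifferential assumption then gives $\zeta \leq \sup_{g \in \partial(\|\cdot\|)_{h^*}} \langle g, h\rangle \leq \|\hat\nu\|/\|\hat h - h^*\|$, i.e.\ $\|\hat h - h^*\| \leq \|\hat\nu\|/\zeta$. Multiplying by $r^*(\gamma)$ and using the $\mathcal C_{r^*(\gamma)}$-exclusion bound finishes the proof.

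The main delicate point is getting the subdifferential step right: one must first use the minimality inequality $\|\hat h\| \leq \|h^*\| + \|\hat\nu\|$ to bound $\langle g, \hat h - h^*\rangle$ \emph{before} taking the supremum in $g$, and then recognize that the normalized direction $(\hat h - h^*)/\|\hat h - h^*\|$ lies in exactly the localization set $H_{r,\gamma}$ appearing in $\Delta(\gamma, h^*)$. Once these two ingredients are combined, the remaining arithmetic is immediate.
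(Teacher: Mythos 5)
Your proof is correct and follows essentially the same route as the paper: the minimality of $\hat h$ against the interpolant $h^*+\hat\nu$ gives $\|\hat h\|\leq\|h^*\|+\|\hat\nu\|$, the exclusion from $\mathcal C_{r^*(\gamma)}$ converts norm bounds on $\hat h-h^*$ into $L_2(\mu)$ bounds, and the subgradient inequality applied to the normalized direction in $H_{r,\gamma}$ yields $\|\hat h-h^*\|\leq\|\hat\nu\|/\zeta$ exactly as in the paper's argument. No gaps.
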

\end{proof}

\subsubsection{Proof of  Proposition~\ref{thm_small_ball_gen}}
\begin{proof}
The proof is based on the small-ball method  \cite{koltchinskii2015bounding,mendelson2014learning}.
We define $\gamma = \delta \kappa /32$.  Since $ \hat  h$ is an interpolator, we have that 
\begin{equation} \label{small_ball_start}
    \frac{\| \langle X, \hat h -h^* \rangle \|_{L_2(\mu)}^2}{r^*(\gamma)^2} \frac{1}{n}\sum_{i=1}^n \langle X_i,  \frac{r^*(\gamma)(\hat h - h^*)}{\| \langle X, \hat h -h^* \rangle \|_{L_2(\mu)}} \rangle^2 = \frac{1}{n}\sum_{i=1}^n \xi_i^2 .
\end{equation}
Let $\tilde h = r^*(\gamma) (\hat h - h^*) / \| \langle X, \hat h -h^* \rangle \|_{L_2(\mu)}$. When $\hat h - h^*\in C_{r^*(\gamma)}$, we have that $\| \langle X, \tilde h \rangle \|_{L_2(\mu)} = r^*(\gamma)$ and $\| \tilde h \| \leq 1$. It follows that
$$
\mathbf{1} \left ( \hat h - h^*\in C_{r^*(\gamma)} \right )\| \langle X, \hat h -h^* \rangle \|_{L_2(\mu)}^2 \leq \frac{\| \xi \|_2}{n} \frac{r^*(\gamma)^2}{\inf_{\substack{ h:\| \langle X, h \rangle \|_{L_2(\mu)} = r^*(\gamma) \\ \| h \| \leq 1} } \frac{1}{n}\sum_{i=1}^n \langle X_i, h \rangle^2 }.
$$
The small-ball method consists of showing that there exists $\nu >0$ such that
\begin{equation} \label{goal_small_ball}
     \inf_{\substack{ h:\| \langle X, h  \rangle \|_{L_2(\mu)} = r^*(\gamma) \\ \| h \| \leq 1} }\frac{1}{n}\sum_{i=1}^n \langle X_i, h \rangle^2 \geq \nu r^*(\gamma)^2
\end{equation}
holds with high probability. 
Equation \eqref{goal_small_ball} holds if for some $\eta > 0$ 
\begin{equation} \label{goal_small_ball_2}
    A_n := \inf_{\substack{ h:\| \langle X, h  \rangle \|_{L_2(\mu)} = r^*(\gamma) \\ \| h \| \leq 1} }\sum_{i=1}^n \mathbf 1 \big(  \big | \langle X_i , h \rangle \big| \geq \eta r^*(\gamma) \big ) \geq \frac{\nu n }{\eta^2} . 
\end{equation}
Hence, it suffices to show that \eqref{goal_small_ball_2} holds with high probability. 
Let $\phi: \mathbb R \mapsto \mathbb R$ be defined as
$$
\phi(t) = \left\{
    \begin{array}{ll}
       0 & \mbox{if } t \leq 1 \\
       t-1 & \mbox{if } t \in [1,2]\\
       1 & \mbox{if } t \geq 2
    \end{array}
\right.
$$
The function $\phi$ is $1$-Lipschitz and satisfies $\mathbf 1(t \geq 2) \leq \phi(t) \leq \mathbf 1(t \geq 1) $ for every $t \in \mathbb R$. For $h \in \mathbb H$ such that $\| \langle X,h \rangle \|_{L_2(\mu)} = r^*(\gamma)$ and $\| h \| \leq 1$, we obtain that 
\begin{align*}
&   \frac{1}{n} \sum_{i=1}^n \mathbf 1 \big( \big| \langle X_i, h \rangle \big| \geq \eta r^*(\gamma)\big )   \geq \frac{1}{n}\sum_{i=1}^n \phi \left ( \frac{\big| \langle X_i, h \rangle \big|}{\eta r^*(\gamma)} \right )\\
    \geq & \mathbb{E} \left [\phi \left ( \frac{\big| \langle X_1, h \rangle  \big|}{\eta r^*(\gamma)} \right ) \right ]- \left |  \frac{1}{n}\sum_{i=1}^n \phi \left ( \frac{\big| \langle X_i, h \rangle  \big|}{\eta r^*(\gamma)} \right )- \mathbb{E} \left [ \phi \left ( \frac{\big| \langle X_i, h \rangle  \big|}{\eta r^*(\gamma)} \right ) \right ] \right | \\
    \geq & \mathbb P \big( \big| \langle X_1, h \rangle  \big| \geq 2 \eta r^*(\gamma)  \big)  
  - \left |  \frac{1}{n}\sum_{i=1}^n \phi \left ( \frac{\big| \langle X_i, h \rangle \big|}{\eta r^*(\gamma)} \right )- \mathbb{E} \left [ \phi \left ( \frac{\big| \langle X_i, h \rangle  \big|}{\eta r^*(\gamma)} \right ) \right ] \right | . 
\end{align*}

It follows that 
\begin{align*}
A_n & \geq \inf_{\substack{h: \| \langle X, h \rangle\|_{L_2(\mu)}  = r^*(\gamma) \\ \| h \| \leq 1}} \mathbb P \big( \big| \langle X_1, h \rangle \big| \geq 2 \eta \| \langle h, X_1 \rangle \|_{L_2(\mu)} \big) \\
& - \sup_{\substack{ h:\| \langle X, h \rangle\|_{L_2(\mu)}  = r^*(\gamma) \\ \| h \| \leq 1}}  \frac{1}{n} \bigg| \sum_{i=1}^n \phi \bigg( \frac{|\langle X_i,h \rangle|}{\eta r^*(\gamma) }\bigg) - \mathbb E \bigg[ \phi \bigg( \frac{|\langle X_i,h \rangle|}{\eta r^*(\gamma)}\bigg) \bigg]  \bigg| . 
\end{align*}
Taking $2\eta = \kappa$, we obtain by applying the small-ball assumption that 
$$
\inf_{\substack{ h:\| \langle X, h \rangle\|_{L_2(\mu)}  = r^*(\gamma) \\ \| h \| \leq 1}} \mathbb P \big( \big| \langle X_1, h \rangle \big| \geq 2 \eta \| \langle h, X_1 \rangle \|_{L_2(\mu)} \big) \geq \delta. 
$$
Moreover, since $|\phi(t)| \leq 1$ we obtain by applying the bounded differences inequality, Theorem 3.3.14 in \cite{GineNickl16}, that with probability at least $1-t$
\begin{align*}
    & \sup_{\substack{ h:\| \langle X, h \rangle\|_{L_2(\mu)}  = r^*(\gamma) \\ \| h \| \leq 1}}  \frac{1}{n} \bigg| \sum_{i=1}^n \phi \bigg( \frac{|\langle X_i,h \rangle|}{\eta r^*(\gamma) }\bigg) - \mathbb E \bigg[ \phi \bigg( \frac{|\langle X_i,h \rangle|}{\eta r^*(\gamma)}\bigg) \bigg]  \bigg|  \\
    & \leq \mathbb E \sup_{\substack{h: \| \langle X, h \rangle\|_{L_2(\mu)}  = r^*(\gamma) \\ \| h \| \leq 1}}  \frac{1}{n} \bigg| \sum_{i=1}^n \phi \bigg( \frac{|\langle X_i,h \rangle|}{\eta r^*(\gamma) }\bigg) - \mathbb E \bigg[ \phi \bigg( \frac{|\langle X_i,h \rangle|}{\eta r^*(\gamma)}\bigg) \bigg]  \bigg|  +  \sqrt \frac{\log(2/t)}{n} \\
    & \leq \frac{4}{\eta r^*(\gamma)} \mathbb E \sup_{\substack{ h:\| \langle X, h \rangle\|_{L_2(\mu)}  = r^*(\gamma) \\ \| h \| \leq 1}} \frac{1}{n} \sum_{i=1}^n \varepsilon_i \langle X_i ,h \rangle + \sqrt \frac{\log(2/t)}{n},
\end{align*}
where $(\varepsilon_i)_{i=1}^n$ are i.i.d. Rademacher random variables that are independent from $(X_i)_{i=1}^n$. For the second inequality we used first a symmetrization inequality and then a contraction inequality,  Theorem 3.1.21 and Theorem 3.2.1. in \cite{GineNickl16}, respectively. 
Using the definition of $r^*(\gamma)$, we have  with probability at least $1-t$ that 
$$
A_n \geq \delta - \frac{4 \gamma}{\eta} - \sqrt{\frac{\log(2/t)}{n}}.
$$
Thus, since $2 \eta = \kappa$, choosing $\nu = \delta \kappa^2/8$, $\gamma = \kappa \delta /32$ and $t = 2\exp(-\delta^2 n /16)$  we obtain that 
$$
A_n \geq \frac{\nu}{\eta^2} . 
$$
Hence, overall we obtain that with probability at least $1- 2\exp(-\delta^2 n /16)$
$$
 \mathbf{1} \left (\hat h - h^*\in C_{r^*(\gamma)} \right )\| \langle X , \hat h - h^* \rangle \|_{L_2(\mu)}^2 \leq \frac{8}{\delta \kappa^2 } \frac{\| \xi \|_2^2}{ n} ,
$$
concluding the proof. 
\end{proof}
\subsubsection{Proof of Proposition~\ref{Sparsity theorem}}
\begin{proof}
We set $\gamma = \delta \kappa /32$ and throughout assume that $\hat h-h \notin \mathcal{C}_{r^*(\gamma)}$ because otherwise the claimed bound is trivial. By definition of $\hat \nu$, we have that $\langle X_i , h^* + \hat \nu \rangle = Y_i$ and it follows that 
$$
0 \geq \|\hat h \| - \| h^* + \hat \nu \| \geq \| \hat h - h^* \| - 2 \| h^* \| - \| \hat \nu \| . 
$$
Since $\hat h - h^* \notin \mathcal{C}_{r^*(\gamma)}$, this shows the first statement of Proposition~\ref{Sparsity theorem}. \\
For the second claim, consider $g \in \partial (\| \cdot \|)_{h^*}$. By definition of the subdifferential, we obtain that 
\begin{equation} \label{eq1 proof se}
    \| \hat h \| - \| h^* \| \geq \langle g , \hat h -h^* \rangle  = \| \hat h - h^* \| \langle g, \frac{\hat h -h^*}{\| \hat h - h^* \|} \rangle . 
\end{equation}
Let $\tilde h = \hat h -h^* / \| \hat h - h^* \|$. We have $\| \tilde h \| = 1$ and, since $(\hat h - h^*) \notin \mathcal{C}_{r^*(\gamma)}$,  $\| \langle X, \tilde h \rangle \|_{L_2(\mu)} \leq r^*(\gamma)$. Hence, and since~\eqref{eq1 proof se} holds for any $g \in \partial (\| \cdot \|)_{h^*}$, we obtain by using the subdifferential condition that  
$$
0 \geq \| \hat h \| - \| h^* \| - \| \hat \nu \| \geq  \| \hat h - h^* \| \Delta(\gamma, h^*) - \| \hat \nu \| \geq \zeta \| \hat h - h^* \| - \| \hat \nu \| .
$$
Recalling that $\hat h - h^* \notin \mathcal{C}_{r^*(\gamma)}$ concludes the proof.
\end{proof}

\subsection{Proof of Lemma~\ref{duality formulation}}\label{sec_duality}
\begin{proof}
We use Lagrangian duality to control $\| \hat \nu \|$, where we recall that $\hat \nu$ is defined as the solution of 
\begin{equation} \label{primal_problem}
    \inf_{h \in \mathbb H} \| h \| \quad \textnormal{subject to} \quad  \langle h,X_i \rangle = \xi_i, \quad i=1,\cdots,n.
\end{equation}
For $v\in \mathbb R^n $, we  define the Lagrangian $\mathcal L: \mathbb H \times \mathbb R^n \mapsto \mathbb R$ as
$$
\mathcal L(h,v) = \| h \| + \sum_{i=1}^n v_i \big( \langle h,X_i \rangle - \xi_i   \big) .
$$
The dual problem of \eqref{primal_problem} is defined as
\begin{equation} \label{dual_problem}
    \sup_{v \in \mathbb R^n} \inf_{h \in \mathbb H} \mathcal L(h,v).
\end{equation}
We have that 
$$
\inf_{h \in \mathbb H} \mathcal L(h,v) = \inf_{h \in \mathbb H} \big \{  \| h \| + \sum_{i=1}^n v_i \big( \langle h,X_i \rangle - \xi_i   \big) \big \} = -v^T \xi - \sup_{h \in \mathbb H} \big \{ -   \langle h, \sum_{i=1}^n v_i X_i \rangle  - \| h\|   \big\} ,
$$
where $\xi = (\xi_1,\cdots \xi_n)$. Recall that for any function $f: \mathbb H \mapsto \mathbb R$, the conjugate $f^*$ is defined as
\begin{equation} \label{def_conjugate}
    f^*(y) = \sup_{x \in \mathbb H} \big \{ \langle x,y \rangle - f(x)  \big \} .
\end{equation}
In particular (see \cite{boyd2004convex}, Example 3.26), when $f(h) = \|h\|$, for $\| \cdot \|$ a norm acting on $\mathbb H$, we have that 
\begin{equation} \label{conjugate_norm}
f^*(y) = \left\{
    \begin{array}{ll}
        0 & \mbox{if } y \in B^*  \\
        \infty & \mbox{otherwise} ,
    \end{array}
\right.
\end{equation}
where $B^*$ is the unit ball with respect to the dual norm of $\| \cdot \|$. 
From \eqref{def_conjugate} and~\eqref{conjugate_norm}, the dual problem \eqref{dual_problem} can be rewritten as
$$
\sup_{v \in \mathbb R^n} - v^T \xi \quad \textnormal{subject to} \quad \sum_{i=1}^n v_i X_i \in B^* .
$$
Since the $X_i$ are linearly independent and $\dim(\mathbb{H})\geq n$ by assumption , the Moore-Penrose inverse exists, and hence there exists  $h$ in $\mathbb H$ such that $\langle X_i,h \rangle = \xi_i$ for $i=1,\cdots,n$. Hence, Slater's condition (e.g. p. 226 in\cite{boyd2004convex}) holds and consequently there is no duality gap. It follows that 
\begin{align}
\| \hat \nu \| = \sup_{v \in \mathbb R^n} v^T \xi \quad \textnormal{subject to} \quad \|\sum_{i=1}^n v_i X_i \|^* \leq 1  . \label{dual problem proof}
\end{align} 
Applying the Cauchy-Schwarz inequality concludes the proof of the upper bound in \eqref{bound lemma dual}. The lower bound follows by choosing  $v= \frac{\xi}{\| \sum_{i=1}^n \xi_i X_i \|^*}$ in \eqref{dual problem proof}. 
\end{proof}

\subsection{Proof of Theorem~\ref{thm RERM}}
\begin{proof}
The proof of Theorem~\ref{thm RERM} is similar to the one of Theorem~\ref{thm cs}. We first present the following two lemmas, which will be used in the proof. 
\begin{lemma} \label{lemma control norm erm}
For $\lambda>0$ the estimator $\hat h_{\lambda}$ satisfies
$$
\| \hat h_{\lambda}\| \leq \| h^* \| +\|\hat \nu\|.
$$
\end{lemma}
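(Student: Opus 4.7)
The plan is to exploit the fact that the shift $h^* + \hat\nu$ is itself an interpolator of the data, so we can use it as a competitor in the optimization defining $\hat h_\lambda$.

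First I would observe that by the definition of $\hat\nu$ in \eqref{interpole noise}, we have $\langle X_i, \hat\nu\rangle = \xi_i$ for all $i=1,\dots,n$, hence
\begin{equation*}
\langle X_i, h^* + \hat\nu\rangle = \langle X_i, h^*\rangle + \xi_i = Y_i,\quad i=1,\dots,n.
\end{equation*}
In particular, $h^* + \hat\nu$ achieves zero empirical squared loss.

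Next, invoking the optimality of $\hat h_\lambda$ in the definition \eqref{def RERM} and plugging in $h = h^* + \hat\nu$ as a competitor yields
\begin{equation*}
\frac{1}{n}\sum_{i=1}^n \bigl(Y_i - \langle X_i, \hat h_\lambda\rangle\bigr)^2 + 2\lambda \|\hat h_\lambda\| \;\leq\; 0 + 2\lambda \|h^* + \hat\nu\|.
\end{equation*}
Dropping the non-negative empirical squared loss on the left-hand side and applying the triangle inequality on the right gives $2\lambda\|\hat h_\lambda\| \leq 2\lambda(\|h^*\| + \|\hat\nu\|)$. Dividing through by $2\lambda > 0$ finishes the argument.

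There is no real obstacle here: the entire proof is a one-line competitor argument that uses only (i) the interpolating property of $h^* + \hat\nu$, (ii) the definition of $\hat h_\lambda$ as a minimizer, and (iii) the triangle inequality. The only mild point to keep in mind is that the argument requires $\lambda > 0$ so the final division is legitimate, which is exactly the hypothesis of the lemma.
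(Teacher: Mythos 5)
Your proposal is correct and follows essentially the same argument as the paper: compare the RERM objective at $\hat h_\lambda$ with that at the interpolating competitor $h^* + \hat\nu$, drop the non-negative empirical loss, apply the triangle inequality, and divide by $\lambda > 0$. Nothing is missing.
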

\begin{lemma} \label{lemma sum square}
We have that 
$$
\sum_{i=1}^n \langle X_i, \hat h_{\lambda} - h^* \rangle^2 \leq 4 \| \xi \|_2^2 + 4 \lambda n  \left( \| h^* \| - \| \hat h_{\lambda}\| \right)
$$
\end{lemma}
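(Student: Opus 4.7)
The plan is to exploit the optimality of $\hat h_\lambda$ in the definition \eqref{def RERM} by comparing the penalized empirical risk at $\hat h_\lambda$ against its value at the competitor $h^*$. Substituting $Y_i = \langle X_i, h^*\rangle + \xi_i$ on the right-hand side and writing $u_i := \langle X_i, \hat h_\lambda - h^*\rangle$ on the left, the inequality
$$
\frac{1}{n}\sum_{i=1}^n (Y_i - \langle X_i, \hat h_\lambda\rangle)^2 + 2\lambda \|\hat h_\lambda\| \leq \frac{1}{n}\sum_{i=1}^n (Y_i - \langle X_i, h^*\rangle)^2 + 2\lambda \|h^*\|
$$
becomes, after expanding $(\xi_i - u_i)^2$ and cancelling the $\|\xi\|_2^2/n$ terms,
$$
\frac{1}{n}\sum_{i=1}^n u_i^2 \leq \frac{2}{n}\sum_{i=1}^n \xi_i u_i + 2\lambda(\|h^*\| - \|\hat h_\lambda\|).
$$

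The second step is to control the cross term. Cauchy-Schwarz applied to the inner product of $\xi$ and $u = (u_1,\dots,u_n)$ gives $\sum \xi_i u_i \leq \|\xi\|_2 \sqrt{\sum u_i^2}$. Introducing the shorthand $S^2 := \sum_{i=1}^n u_i^2$, we obtain
$$
\frac{S^2}{n} \leq \frac{2\|\xi\|_2 \, S}{n} + 2\lambda(\|h^*\| - \|\hat h_\lambda\|).
$$
An AM-GM splitting of the cross term, $2\|\xi\|_2 S \leq \frac{1}{2} S^2 + 2\|\xi\|_2^2$, absorbs half of $S^2/n$ into the left-hand side. Rearranging and multiplying through by $2n$ then yields exactly
$$
S^2 \leq 4\|\xi\|_2^2 + 4\lambda n (\|h^*\| - \|\hat h_\lambda\|),
$$
which is the claim.

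There is no real obstacle: the argument is just ``basic inequality'' for least-squares with a convex penalty combined with Cauchy-Schwarz and AM-GM. The only point requiring a modicum of care is choosing the AM-GM weights so that the coefficient of $S^2$ on the right is strictly less than $1/n$, allowing absorption into the left-hand side and producing precisely the constant $4$ in the two terms of the bound. Note that $\|h^*\| - \|\hat h_\lambda\|$ is allowed to be negative (and is, in the typical regime); the lemma is a relation between the squared prediction discrepancy, the noise energy, and the norm slack, which is exactly what Lemma \ref{lemma control norm erm} and the subsequent proof of Theorem \ref{thm RERM} will chain together.
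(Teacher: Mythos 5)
Your proposal is correct and follows essentially the same route as the paper: the basic inequality from the optimality of $\hat h_\lambda$, Cauchy--Schwarz on the cross term $\sum_i \xi_i \langle X_i, \hat h_\lambda - h^*\rangle$, and then the splitting $2\|\xi\|_2 S \leq 2\|\xi\|_2^2 + S^2/2$ (the paper's $ab \leq a^2/2 + b^2/2$ with $a = 2\|\xi\|_2$) to absorb half of the quadratic term. The constants work out exactly as claimed, so there is nothing to add.
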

The proof of Lemmas \ref{lemma control norm erm} and \ref{lemma sum square} will be given below the proof of Theorem \ref{thm RERM} based on these two lemmas. For the latter we argue as follows. \\

For $\gamma = \delta \kappa/32$ we define
$$
\mathcal C_{r^*(\gamma)} = \{ h \in \mathbb H: \| \langle X, h \rangle \|_{L_2(\mu)} \geq r^*(\gamma) \| h \| \}.
$$
Arguing exactly as in the proof of Proposition~\ref{thm_small_ball_gen}, we have with probability at least $1 -2\exp(-\delta^2 n /16)$
\begin{align} \label{eq small ball ineq rerm}
\mathbf 1 \left ( \hat h_{\lambda}- h^* \in C_{r^*(\gamma)} \right  ) \| \langle X, \hat h_{\lambda} - h^* \rangle \|_{L_2(\mu)}^2 \leq \frac{8}{\delta \kappa^2 } \frac{1 }{  n} \sum_{i=1}^n \langle X_i, \hat h_{\lambda} - h^* \rangle^2.
\end{align}
Applying Lemma~\ref{lemma sum square} we obtain
$$
\mathbf 1 \left (\hat h_{\lambda}- h^*\in C_{r^*(\gamma)} \right  ) \| \langle X, \hat h_{\lambda} - h^* \rangle \|_{L_2(\mu)} \leq  \frac{2\sqrt 8}{\sqrt \delta \kappa} \left( \frac{\| \xi \|_2}{\sqrt n} + \sqrt{\lambda \| h^* \|}  \right)
$$
We now consider the case $\hat h_\lambda-h^*\notin  C_{r^*(\gamma)}$. By the triangle inequality and afterwards an application of Lemma \ref{lemma control norm erm}, we obtain that 
$$
\| \hat h_{\lambda} - h^* \| \leq \|h^*\|+\|\hat h_{\lambda}\| \leq  2  \|h^* \| + \|\hat \nu\|  ,
$$
and thus 
$$
\mathbf 1 \left (\hat h_{\lambda}- h^* \notin C_{r^*(\gamma)} \right  ) \| \langle X, \hat h_{\lambda} - h^* \rangle \|_{L_2(\mu)} \leq  r^*(\gamma) \left( 2 \|h^* \| + \|\hat \nu\|\right) .
$$

We now turn to the second part of Theorem~\ref{thm RERM}, where the two subdifferential conditions are satisfied and there exist $\zeta,\bar \zeta>0$ such that $\Delta(\gamma,h^*) \geq \zeta$ and $\bar \Delta(\gamma,h^*) \leq \bar \zeta$. \\
When $\hat h_{\lambda} - h^* \notin C_{r^*(\gamma)} $ we use the lower bound on $\Delta(\gamma,h^*)$. Indeed, when $\hat h_{\lambda} - h^* \notin C_{r^*(\gamma)} $ there exists some $g \in \partial \left( \| \cdot \| \right)_{h^*}$ such that we have
\begin{align} \label{eq lower bound subd}
\| \hat h_{\lambda} \| - \|h^* \| \geq \left \langle g , \frac{\hat h_{\lambda} - h^*}{\|\hat h_{\lambda} - h^* \|}  \right \rangle \|\hat h_{\lambda} - h^* \| \geq \zeta \|\hat h_{\lambda} - h^* \| . 
\end{align}
Applying Lemma~\ref{lemma control norm erm} we then obtain
\begin{align*}
\|\hat h_{\lambda} - h^* \| \leq \frac{\| \hat h_{\lambda} \| - \|h^*\|}{\zeta} \leq \frac{\|\hat \nu\|}{\zeta}  .
\end{align*}
Hence,  using the above when  $\hat h_{\lambda} - h^* \notin C_{r^*(\gamma)}$,  we obtain 
\begin{align*}
\mathbf 1 \left ( \hat h_{\lambda}- h^* \notin C_{r^*(\gamma)} \right  ) \| \langle X, \hat h_{\lambda} - h^* \rangle \|_{L_2(\mu)} & \leq \mathbf 1 \left ( \hat h_{\lambda}- h^* \notin C_{r^*(\gamma)} \right  ) r^*(\gamma)  \|\hat h_\lambda - h^*\| \\ & \leq  \frac{r^*(\gamma) \|\hat \nu\|} {\zeta} .
\end{align*}
When $\hat h_{\lambda} - h^* \in C_{r^*(\gamma)} $ we use the upper bound on $\bar \Delta(\gamma,h^*)$. Indeed, in this case there exists some $g \in \partial \left( \| \cdot \| \right)_{h^*}$ such that
$$
 \|h^*\|- \| \hat h_{\lambda} \| \leq \left \langle g , \frac{r^*(\gamma)(h^*-  \hat h_{\lambda})}{\| \langle \hat h_{\lambda} - h^*, X \rangle \|_{L_2(\mu)}} \right \rangle \frac{\| \langle \hat h_{\lambda} - h^* , X \rangle \|_{L_2(\mu)}}{r^*(\gamma)} \leq \bar \zeta \frac{\| \langle \hat h_{\lambda} - h^*, X \rangle \|_{L_2(\mu)}}{r^*(\gamma)}.
$$
Finally, using \eqref{eq small ball ineq rerm}, applying Lemma~\ref{lemma sum square} and using the above, it follows that with probability at least $1-2\exp(-\delta^2n/16)$ 
\begin{align*}
& \mathbf 1 \left (\hat h_{\lambda}- h^* \in C_{r^*(\gamma)} \right  )\| \langle  \hat h_{\lambda} - h^*, X \rangle \|_{L_2(\mu)} \\ \leq & \frac{2\sqrt 8}{\sqrt \delta \kappa } \frac{\| \xi \|_2}{\sqrt n} + \mathbf 1 \left (\hat h_{\lambda}- h^* \in C_{r^*(\gamma)} \right  ) \frac{2\sqrt 8}{\sqrt \delta \kappa} \sqrt{ \frac{\lambda \bar \zeta}{r^*(\gamma)}} \| \langle  \hat h_{\lambda} - h^* ,X\rangle \|_{L_2(\mu)}^{1/2} .
\end{align*}
Using the inequality $ab \leq a^2/2 + b^2/2$ with $a = \left((32\lambda \bar \zeta)/(r^*(\gamma)\delta \kappa^2)\right)^{1/2}$ and \\$b = \mathbf 1 \left ( \hat h_{\lambda}- h^* \in C_{r^*(\gamma)} \right  )\| \langle  \hat h_{\lambda} - h^*,X \rangle \|_{L_2(\mu)}^{1/2}$ we obtain the final result. 
\end{proof}

\subsubsection{Proof of Lemma~\ref{lemma control norm erm}}
\begin{proof}
Since  the empirical risk is always greater or equal than zero, $\hat h_\lambda$ minimizes the RERM objective \eqref{def RERM} and $h^* + \hat{\nu}$ interpolates the data, we have that
\begin{align*}
   \lambda \|\hat h_\lambda\| & \leq  \frac{1}{2n} \sum_{i=1}^n \left ( Y_i-\langle X_i, \hat h_\lambda \rangle\right )^2 + \lambda \|\hat h_\lambda\| \leq \frac{1}{2n}\sum_{i=1}^n \bigg ( Y_i-\langle X_i, h^* + \hat \nu \rangle\bigg )^2 + \lambda \|h^* + \hat \nu\| \\ & = \lambda \|h^* + \hat \nu\| \leq  \lambda\|h^*\|+\lambda\|\hat \nu\|.
\end{align*}
Dividing by $\lambda, \lambda>0$, yields the result.
\end{proof}

\subsubsection{Proof of Lemma~\ref{lemma sum square}}
\begin{proof}
By definition of $\hat h_{\lambda}$ we have 
\begin{equation} \label{eq def minimum}
    \frac{1}{n} \sum_{i=1}^n \left( Y_i - \langle \hat h_{\lambda} ,X_i \rangle \right)^2 + 2\lambda \| \hat h_{\lambda} \| \leq     \frac{1}{n} \sum_{i=1}^n \bigg( Y_i - \langle  h^* ,X_i \rangle \bigg)^2 + 2\lambda \|  h^* \| .
\end{equation}
It follows from Equation~\eqref{eq def minimum}, that 
\begin{align*}
     \sum_{i=1}^n \langle \hat h_{\lambda} -h^* ,X_i \rangle^2  &\leq 2 \sum_{i=1}^n \xi_i \langle \hat h_{\lambda} - h^* , X_i \rangle + 2\lambda n \left( \| h^* \| - \| \hat h_{\lambda}\| \right)  \\
     & \leq 2 \| \xi \|_2 \left( \sum_{i=1}^n \langle \hat h_{\lambda} - h^* , X_i \rangle^2 \right)^{1/2} +  2\lambda n \left( \| h^* \| - \| \hat h_{\lambda}\| \right) .
\end{align*}
Using the inequality $ab \leq a^2/2 + b^2/2$  for $ a = 2 \| \xi \|_2$ and $b= \left( \sum_{i=1}^n \langle \hat h_{\lambda} - h^* , X_i \rangle^2 \right)^{1/2}$ concludes the proof. 

\end{proof}
\subsection{Proofs of Theorem~\ref{thm cs} and Theorem~\ref{erm lasso}}
\begin{proof}
The proof of Theorem~\ref{thm cs} is based on Theorem~\ref{Main general theorem} and Lemma~\ref{duality formulation}. We apply the general routine presented in Section~\ref{sec main}. 
\begin{itemize}
    \item[]\textbf{Step 1:} For $\mathbb H = \mathbb R^p$, the norm defined on $\mathbb H$ is the $\ell_1$-norm,  $\|h\|=\| h \|_1$. \\
    \item[] \textbf{Step 2:} Since $X_1 \sim \mathcal N(0,\Sigma)$, for any $h \in \mathbb{R}^p$, $\langle h , X_1 \rangle \sim \mathcal N(0,\|\Sigma^{1/2} h\|_2^2)$ and $\| \langle h , X_1 \rangle \|_{L_4(\mu)}^4 =  3 \|\Sigma^{1/2} h\|_2^4= 3 \|\langle h, X_1\rangle\|_{L^2(\mu)}^4$. Hence, by Lemma~\ref{paley argument}, the small-ball assumption is verified for any $\kappa$ in $[0,1)$ and $\delta = (1-\kappa^2)/3$. \\
    \item[] \textbf{Step 3:} By symmetry of $X_i$, the random variable $\varepsilon_i X_i$ is distributed as $X_i$ for any $i=1,\cdots,n$ and for any $h \in \mathbb R^p$, $n^{-1/2}\sum_{i=1}^n \varepsilon_i \langle X_i,h \rangle$ is distributed as $g^T \Sigma^{1/2}h$, where $g^T \sim \mathcal{N}(0,I_p)$. Let $r>0$. Observe that $\Sigma^{1/2} B_1$ is the convex hull of $2p$ points, namely $\{\pm \Sigma^{1/2} e_i\}_{i=1}^p$, and that $\max_i \|\Sigma^{1/2} e_i\|_2= \max_i \sqrt{ \Sigma_{ii}}$. 
    Hence, applying Proposition 1 in~\cite{bellec2019localized} (with $M=2p$, $T=\Sigma^{1/2} B_1 /\max_i \sqrt{\Sigma_{ii}}$ and $s=r/(\max_i \sqrt{\Sigma_{ii}}$) there), we obtain that 
    \begin{align*}
    \mathbb E \sup_{h \in B_1^p \cap r \Sigma^{-1/2}B_2^p} g^T \Sigma^{1/2} h 
    & \leq 4   \sqrt{\max_{i} \Sigma_{ii} \log_+ \big(8ep r^2 /(\max_i \Sigma_{ii}) \big)} \\
    \end{align*}
    where $\log_+(\cdot) = \max(\log(\cdot),1)$ and $B_2^p$ and $B_1^p$ denote the unit $\ell_2$ and $\ell_1$ ball in $\mathbb{R}^p$, respectively. It follows that for any $\gamma >0$
    $$
    r^*(\gamma) \leq \inf \{ r>0: 4 \sqrt{\max_{i} \Sigma_{ii}\log_+ \big(8ep r^2/(\max_{i} \Sigma_{ii}) \big)} \leq \gamma \sqrt n r \}.
    $$
    Using the inequality $\log(x) \leq x - 1$,  we obtain that when $p/n \geq (384e)/\gamma^2$
    $$
    r^*(\gamma) \leq \frac{\sqrt{48 \max_{i} \Sigma_{ii}}}{\gamma} \sqrt \frac{\log(p/n)}{n}.
    $$
 
    \item[]\textbf{Step 4:} Since the dual norm of the $\ell_1$-norm is the $\ell_{\infty}$-norm, Lemma~\ref{duality formulation} yields
    $$
    \| \hat \nu \|_1 \leq \frac{\| \xi \|_2}{ \inf_{v \in \mathcal S^{n-1}} \| \sum_{i=1}^n v_i X_i \|_{\infty}} .
    $$
    The following lemma establishes a lower bound on the infimum above. 
    \begin{lemma} \label{lemma_interpolation_noise}
Suppose that $\alpha < 1$ is a constant such that 
$$
p \geq  n \max \left [  \left( \log \left( \frac{72n}{\alpha} \right) \sqrt{2\pi} \right)^{1/(1-\alpha)} ,  e^{1/\alpha} \right]. 
$$
Then, with probability at least
$$
1-p\exp(- n/2) - \exp \left( -\frac{e^{1-\alpha}}{2 \sqrt{2\pi}} n \right)
$$ 
we have that 
\begin{align*}
    \inf_{v \in \mathcal S^{n-1}} \left  \| \sum_{i=1}^n v_i X_i  \right \|_{\infty} \geq   \frac{\sqrt{ (\alpha/2)\log( p/ n)}}{\sup_{\|b\|_1=1} \|\Sigma^{-1/2} b\|_1}.
\end{align*}
\end{lemma}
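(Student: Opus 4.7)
The plan is to first reduce the assertion to the isotropic case $\Sigma = I_p$ by factoring out $\Sigma^{1/2}$ via Hölder duality, and then to lower bound the resulting infimum by combining a pointwise Gaussian small-ball estimate with an $\epsilon$-net argument over $\mathcal S^{n-1}$ and a control on the column norms of the design matrix. The main obstacle is to balance the net cardinality against the pointwise probability so that the exact thresholds stated in the lemma fall out; the rest is a routine concentration exercise.

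First I would set $\tilde X_i := \Sigma^{-1/2} X_i \overset{\mathrm{iid}}{\sim} \mathcal N(0, I_p)$ and $w := \sum_{i=1}^n v_i \tilde X_i$, so that $\sum_{i=1}^n v_i X_i = \Sigma^{1/2} w$. A one-line $\ell_1$--$\ell_\infty$ Hölder argument gives
\begin{align*}
\|w\|_\infty \;=\; \sup_{\|b\|_1 = 1} \langle \Sigma^{-1/2} b,\, \Sigma^{1/2} w \rangle \;\leq\; \Big(\sup_{\|b\|_1 = 1}\|\Sigma^{-1/2} b\|_1\Big) \cdot \|\Sigma^{1/2} w\|_\infty ,
\end{align*}
which, after rearranging, produces the denominator in the stated bound and reduces the task to lower bounding $\inf_{v \in \mathcal S^{n-1}} \|w\|_\infty$ by $\sqrt{(\alpha/2)\log(p/n)}$ in the isotropic case.

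For the isotropic lower bound I would proceed pointwise and then discretize. For fixed $v \in \mathcal S^{n-1}$ the coordinates of $w$ are independent standard Gaussians, so
\begin{align*}
\mathbb P(\|w\|_\infty \leq s) \;=\; (1 - 2 \bar \Phi(s))^p \;\leq\; \exp(-2 p \bar \Phi(s)),
\end{align*}
where $\bar\Phi$ denotes the standard normal upper tail. Choosing $s = 2 \tau_0$ with $\tau_0 := \sqrt{(\alpha/2) \log(p/n)}$ and invoking the Mill's ratio lower bound $\bar \Phi(s) \geq e^{-s^2/2}/(\sqrt{2\pi}(s + s^{-1}))$ produces a factor of the form $c_\alpha (n/p)^{\alpha}$ of the correct order. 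A union bound over a minimal $\epsilon$-net $\mathcal N_\epsilon$ of $\mathcal S^{n-1}$ with $|\mathcal N_\epsilon| \leq (3/\epsilon)^n$ then yields a uniform lower bound on $\|w\|_\infty$ for $v \in \mathcal N_\epsilon$. To pass from the net to the sphere, I would use
\begin{align*}
\Big\| \sum_i v_i \tilde X_i - \sum_i v'_i \tilde X_i \Big\|_\infty \;\leq\; \epsilon \max_{j \in [p]} \|\tilde X_{\cdot, j}\|_2 ,
\end{align*}
where $\tilde X_{\cdot, j}$ denotes the $j$-th column of the matrix with rows $\tilde X_i^T$, and apply a Laurent--Massart $\chi^2$ deviation together with a union bound over $j \in [p]$ to obtain $\max_j \|\tilde X_{\cdot, j}\|_2 \leq C\sqrt n$ with probability at least $1 - p\exp(-n/2)$. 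This explains the first probability term in the statement.

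The final step is bookkeeping. Choosing $\epsilon \asymp \sqrt{\log(p/n)/n}$ makes the net perturbation at most $\tau_0$, so that after subtracting it from the pointwise bound $2\tau_0$ the announced value $\tau_0 = \sqrt{(\alpha/2)\log(p/n)}$ remains. The hypothesis $p \geq n(\log(72n/\alpha)\sqrt{2\pi})^{1/(1-\alpha)}$ and the lower bound $p \geq n e^{1/\alpha}$ are exactly what is needed so that the Mill's ratio constants and the $(3/\epsilon)^n$ cardinality factor combine to give $(3/\epsilon)^n \exp(-2 p \bar \Phi(2\tau_0)) \leq \exp(-\tfrac{e^{1-\alpha}}{2\sqrt{2\pi}} n)$; reproducing the precise constants $72$, $\sqrt{2\pi}$, and $e^{1-\alpha}/(2\sqrt{2\pi})$ in the failure probability is the most delicate part of the argument, but it is a direct calculation once the Mill's ratio and $\chi^2$ constants have been tracked.
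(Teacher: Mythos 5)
Your plan follows the paper's proof essentially step for step: the Hölder/duality reduction $\|\mathbb{Z}v\|_\infty \le (\sup_{\|b\|_1=1}\|\Sigma^{-1/2}b\|_1)\,\|\Sigma^{1/2}\mathbb{Z}v\|_\infty$ to pass to the isotropic matrix, a pointwise Gaussian lower-tail bound on $\|\mathbb{Z}v\|_\infty$, a $(3/\epsilon)^n$-net over $\mathcal S^{n-1}$, and the column-norm control $\max_{j}\|\mathbb{Z}_{\cdot j}\|_2 \le C\sqrt n$ with probability $1-p e^{-n/2}$ (the paper uses Jensen plus Borell's inequality instead of Laurent--Massart; this difference is immaterial). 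All of that is sound.

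The gap is in the final calibration, precisely the step you defer as ``a direct calculation.'' With your threshold $s=2\tau_0=\sqrt{2\alpha\log(p/n)}$ the pointwise exponent is $2p\bar\Phi(s)\approx \frac{2}{\sqrt{2\pi}}\cdot\frac{n^{\alpha}p^{1-\alpha}}{s}$, i.e.\ it retains the Mills-ratio factor $1/s \asymp 1/\sqrt{\alpha\log(p/n)}$. The hypothesis on $p$ is calibrated only against the clean quantity $n^{\alpha}p^{1-\alpha}/\sqrt{2\pi}\ \ge\ n\log(72n/\alpha)$, which is exactly twice the net entropy; it gives no control over the extra $\sqrt{\log(p/n)}$ in your denominator, and at the minimal admissible $p$ this factor is of order $\sqrt{\log\log n}$ and unbounded in $n$. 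Consequently, for fixed $\alpha$ and $n$ large enough (beyond a threshold doubly exponential in $1/\alpha$) the inequality $(3/\epsilon)^n\exp(-2p\bar\Phi(2\tau_0))\le \exp\big(-\tfrac{e^{1-\alpha}}{2\sqrt{2\pi}}n\big)$ fails, so your parameter choices do not deliver the stated probability under the stated assumption. The paper sidesteps this by aiming the pointwise bound only at $t=\sqrt{\alpha\log(p/n)}$ (that is, $\sqrt2\,\tau_0$): then $e^{-t^2/2}=(n/p)^{\alpha/2}$ leaves a slack factor $(p/n)^{\alpha/2}$ which, via $\log x\le x-1$, absorbs the Mills denominator $t+1\le 2(p/n)^{\alpha/2}$ and yields the clean exponent $n^{\alpha}p^{1-\alpha}/\sqrt{2\pi}$. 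The net is then taken at the $\log(p/n)$-free scale $\epsilon=\sqrt\alpha/(2\sqrt{2n})$, so the entropy is exactly $\tfrac n2\log(72n/\alpha)$ and the net perturbation costs only the additive constant $\sqrt{\alpha/2}$; the conversion of $\sqrt{\alpha\log(p/n)}-\sqrt{\alpha/2}$ into $\sqrt{(\alpha/2)\log(p/n)}$ is then done using that $p/n$ is large. Your argument can be repaired by the same re-splitting: prove the pointwise bound at $\sqrt{\alpha\log(p/n)}$ and let the net eat an additive constant, rather than aiming at $2\tau_0$ and sacrificing a full $\tau_0$.
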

The proof of Lemma~\ref{lemma_interpolation_noise} is presented in Appendix~\ref{lemma supp bp}. \\ 
    \item[]\textbf{Step 5:} We finally turn to the subdifferential condition. For any $I \subset [p]$ and $h \in \mathbb R^p$, let $P_I h$ be defined as $(P_I h)_i = h_i \mathbf 1\{i \in I \}$. Now, let $h \in \mathbb R^p$ such that $\| h \|_1 = 1$ and $\| h \|_2 \leq r^*(\gamma)$ and $I = \textnormal{supp}(h^*) = \{ i \in [p]: h_i^* \neq 0 \}$. Let us define $g \in \mathbb R^p$ as
    $$
    g_i = \left\{
    \begin{array}{ll}
        \textnormal{sign}(h^*_i) & \mbox{if } i \in I \\
        \textnormal{sign}(h_i) & \mbox{if } i \in I^c. 
    \end{array}
    \right.
    $$
    Since $\| g \|_{\infty} =1$ and $\langle g,h^* \rangle = \| h^* \|_1$, we have $g \in \partial (\| \cdot \|_1)_{h^*}$ and it follows that
    \begin{align*}
        \langle h, g \rangle = \langle P_I h, g \rangle + \langle P_{I^c} h, g \rangle \geq \|P_{I^c} h \|_1 -  \|P_{I} h \|_1 \geq \| h \|_1 - 2 \|P_{I} h \|_1 .
    \end{align*}
    If $3\|P_Ih\|_1 \leq \|P_{I^c}h\|_1$, we immediately obtain 
    \begin{align*}
        \langle h, g \rangle \geq \| h \|_1 - 2 \|P_{I} h \|_1 \geq \|h\|_1 - \frac{\|P_I h\|_1 +\|P_{I^c}h\|_1}{2} \geq \frac{1}{2}.
    \end{align*}
    Similarly, if $3\|P_Ih\|_1 \geq  \|P_{I^c}h\|_1$ we obtain, since we assumed the restricted eigenvalue condition with parameter $\psi$, that
    \begin{align*}
        \langle h, g \rangle \geq \|h\|_1 - 2\|P_Ih\|_1 \geq 1 - 2\sqrt{s}\|P_I h\|_2 \geq 1- \frac{2\sqrt{s}}{\psi}\|h\|_{L^2(\mu)} \geq 1- \frac{2 \sqrt{s} r^*(\gamma)}{\psi} \geq \frac{1}{2},
    \end{align*} when $s\log(p/n)/n \leq \psi^2\gamma^2/768$.  
\end{itemize}
Taking $\kappa = 1/\sqrt 3$, we obtain that $\gamma = 1/(144\sqrt 3)$ and applying Theorem~\ref{Main general theorem} concludes the proof of Theorem~\ref{thm cs}.\\

The proof of Theorem~\ref{erm lasso} follows exactly the same steps. The only difference is the upper bound on $\bar \Delta(\gamma,h^*)$. Take
 $$
    g_i = \left\{
    \begin{array}{ll}
        \textnormal{sign}(h^*_i) & \mbox{if } i \in I \\
        - \textnormal{sign}(h_i)  & \mbox{if } i \in I^c. 
    \end{array}
    \right.
    $$
 Since $\| g \|_{\infty} =1$ and $\langle g,h^* \rangle = \| h^* \|_1$, we have $g \in \partial (\| \cdot \|_1)_{h^*}$. For $h \in \mathbb H$ such that $\| \Sigma^{1/2} h \|_2 = r^*(\gamma)$ and $\| h\| \leq 1$ we have 
    \begin{align*}
        \langle h, g \rangle = \langle P_I h, g \rangle  + \langle P_I^c h, g \rangle \leq  \|P_{I} h \|_1 - \| P_{I^c} h \|_1 .
    \end{align*}
If $3 \| P_I h \|_1 \leq \| P_{I^c}h \|_1$ we have $\langle h, g \rangle \leq 0$. If $3 \| P_I h \|_1 \geq \| P_{I^c}h \|_1$, the restricted eigenvalue condition yields
$$
 \langle h, g \rangle \leq \|P_I h\|_1 \leq \sqrt s \| P_I h \|_2 \leq \frac{\sqrt sr^*(\gamma)}{\psi}.
$$
\end{proof}
\subsection{Proofs of Theorem~\ref{thm groupLasso} and Theorem~\ref{Erm groupLasso}}
\begin{proof}
The proof of Theorem~\ref{thm groupLasso} is based on Theorem~\ref{Main general theorem} and Lemma~\ref{duality formulation}. We apply the general routine presented in Section~\ref{sec main}. 
\begin{itemize}
    \item[] \textbf{Step 1:} For $\mathbb H = \mathbb R^p$, the norm defined on $\mathbb H$ is the group Lasso norm defined in~\eqref{def group Lasso norm}. \\
    \item[] \textbf{Step 2:} Since $X_1 \sim \mathcal N(0,I_p)$, for any $h \in \mathbb{R}^p$, $\langle h , X_1 \rangle \sim \mathcal N(0,\|h\|_2^2)$ and hence $\| \langle h , X_1 \rangle \|_{L_4(\mu)}^4 =  3\|h\|_2^4=3 \|\langle h, X_1 \rangle_{L_2(\mu)}^4$. Hence, by Lemma~\ref{paley argument}, the small-ball assumption is verified for any $\kappa$ in $[0,1)$ and $\delta = (1-\kappa^2)/3$. \\
    \item[] \textbf{Step 3:} By symmetry of $X_i$, the random variable $\varepsilon_i X_i$ is distributed as $X_i$ for any $i=1,\cdots,n$ and for any $h \in \mathbb R^p$, $n^{-1/2}\sum_{i=1}^n \varepsilon_i \langle X_i,h \rangle$ is distributed as $g^T h$, where $g^T \sim \mathcal{N}(0,I_p)$. The dual norm of the group Lasso norm is given by 
    $$
    \| h \|_{\textnormal{GL}}^* = \max_{i=1,\cdots,M} \| h_{G_i} \|_2 , \quad h \in \mathbb R^p.
    $$
  Hence, for $B_{\text{GL}}^p$ and $B_2^p$ denoting the unit group Lasso norm and $\ell_2$ norm ball, respectively, we obtain by applying H\"older's inequality and Jensen's inequality that 
    $$
    \mathbb E \sup_{h \in B_{\textnormal{GL}}^p \cap r B_2^p} g^T h \leq \mathbb E \max_{i=1,\cdots ,M} \| g_{G_i}\|_2 \leq \left( \mathbb E \max_{i=1,\cdots ,M} \| g_{G_i}\|_2^2 \right)^{1/2} .
    $$
    The random variables $\| g_{G_i}\|_2^2$ are independent chi-square distributed random variables with $|G_i|$ degrees of freedom each. By applying Example 2.7 in~\cite{BoucheronLugosiMassart13}, we obtain that 
    \begin{align*}
        \mathbb E \max_{i=1,\cdots ,M} \| g_{G_i}\|_2^2 & \leq \max_{i=1,\cdots,M} | G_i| + 2 \sqrt{\max_{i=1,\cdots,M} | G_i| \log(M)} + 2 \log(M) \\
        & \leq 5 \max_{i=1,\cdots,M} | G_i|   ,
    \end{align*}
    when $\log(M) \leq \max_{i=1,\cdots,M} | G_i| $. It follows that 
    $$
    r^*(\gamma) \leq \frac{\sqrt{5}}{\gamma} \sqrt \frac{\max_{i=1,\cdots,M} | G_i |}{n} .
    $$
    \item[] \textbf{Step 4:} By definition of the dual norm of the group Lasso norm, Lemma~\ref{duality formulation} gives
    $$
    \| \hat \nu \|_1 \leq \frac{\| \xi \|_2}{ \inf_{v \in \mathcal S^{n-1}} \max_{i=1,\cdots,M} \left \| \left ( \sum_{j=1}^n v_j X_j \right )_{G_i} \right \|_2 } ,
    $$
  The following lemma establishes a lower bound on the infimum term. 
    \begin{lemma} \label{lemma noise interpol group Lasso}
    Let 
    $$
    W = \frac{\max_{i=1,\cdots, M } | G_i|}{\min_{i=1,\cdots,M} | G_i| }.
    $$
    Suppose that $p \geq 32n \log \big( 6\sqrt 2 (2\sqrt n + \sqrt W)\big)$. Then, with probability at least 
\begin{align*}
    1-e^{-p/32}-e^{-n/2 + \log(M)}
\end{align*}
we have that
\begin{align*}
   \inf_{v \in \mathcal S^{n-1}} \max_{i=1,\cdots,M} \left \| \left ( \sum_{i=1}^n v_i X_i \right )_{G_i} \right \|_2 \geq  \frac{\sqrt{\min_i |G_i|}}{2\sqrt 2}. 
\end{align*}
\end{lemma}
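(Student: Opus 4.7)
The plan is a net argument on $\mathcal{S}^{n-1}$ combined with a lower-tail $\chi^2$ bound that exploits the block-independence structure of a standard Gaussian vector. Writing $X \in \mathbb{R}^{p \times n}$ for the matrix with columns $X_1, \ldots, X_n$, set $f(v) := \max_{j \leq M} \|(Xv)_{G_j}\|_2$ and $m := \min_j |G_j|$. For any fixed $v \in \mathcal{S}^{n-1}$ we have $Xv \sim \mathcal{N}(0, I_p)$, so its restrictions $\{(Xv)_{G_j}\}_j$ are independent and $\|(Xv)_{G_j}\|_2^2 \sim \chi^2_{|G_j|}$. The Laurent--Massart lower tail gives $\p(\chi^2_{|G_j|} \leq |G_j|/2) \leq e^{-|G_j|/16}$, and since $m/2 \leq |G_j|/2$,
\begin{equation*}
\p\big( f(v)^2 \leq m/2 \big) \;\leq\; \prod_{j=1}^M \p\big(\chi^2_{|G_j|} \leq |G_j|/2\big) \;\leq\; \prod_{j=1}^M e^{-|G_j|/16} \;=\; e^{-p/16}.
\end{equation*}
A standard $\eta$-net $\mathcal{N}_\eta$ of $\mathcal{S}^{n-1}$ of cardinality at most $(3/\eta)^n$ and a union bound then control the minimum of $f$ over the net by $(3/\eta)^n e^{-p/16}$.

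To transfer the bound off the net, observe that $|f(v) - f(v')| \leq \max_j \|X_{G_j}(v-v')\|_2 \leq L \|v-v'\|_2$, where $L := \max_j \|X_{G_j}\|_{\mathrm{op}}$. Each $X_{G_j}$ is a $|G_j| \times n$ standard Gaussian matrix, so by the Davidson--Szarek bound combined with Gaussian concentration, $\|X_{G_j}\|_{\mathrm{op}} \leq \sqrt{|G_j|} + 2\sqrt{n}$ with probability at least $1 - e^{-n/2}$; a union bound over the $M$ groups gives $L \leq \sqrt{Wm} + 2\sqrt{n}$ with probability at least $1 - Me^{-n/2}$. Choosing $\eta = \sqrt{m}/(2\sqrt{2}\, L)$ produces $L\eta = \sqrt{m/2} - \sqrt{m/8}$, so the event $\{f(\tilde v) \geq \sqrt{m/2}\}$ on the nearest net point $\tilde v$ forces $f(v) \geq \sqrt{m/8} = \sqrt{m}/(2\sqrt{2})$ for every $v \in \mathcal{S}^{n-1}$.

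Finally, we tally the probabilities. On the operator-norm event, $3/\eta \leq 6\sqrt{2}(\sqrt{W} + 2\sqrt{n/m}) \leq 6\sqrt{2}(2\sqrt{n} + \sqrt{W})$ because $m \geq 1$, so the hypothesis $p \geq 32 n \log(6\sqrt{2}(2\sqrt n + \sqrt W))$ implies $n \log(3/\eta) \leq p/32$ and hence $(3/\eta)^n e^{-p/16} \leq e^{-p/32}$. Combining this with the operator-norm failure probability $Me^{-n/2} = e^{-n/2 + \log M}$ yields the stated total failure bound. The main technical subtlety is the choice to use the per-block Lipschitz constant $\max_j \|X_{G_j}\|_{\mathrm{op}}$ rather than $\|X\|_{\mathrm{op}}$; the former scales like $\sqrt{Wm} \leq \sqrt{\max_j|G_j|}$ instead of $\sqrt{p}$, which is precisely what allows the condition on $p$ to depend only logarithmically on $W$ and $n$ rather than on the ambient dimension $p$ itself.
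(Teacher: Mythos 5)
Your proposal is correct and follows essentially the same route as the paper's proof: an $\epsilon$-net on $\mathcal S^{n-1}$, the Laurent--Massart lower tail applied blockwise (multiplying the independent $\chi^2_{|G_j|}$ tails to get $e^{-p/16}$), operator-norm control of the blocks $X_{G_j}$ to pass from the net to the whole sphere, and the same bookkeeping giving $e^{-p/32}+e^{-n/2+\log M}$. The only point to tidy is that the net resolution should be fixed deterministically, i.e.\ take $\eta=\sqrt m/\bigl(2\sqrt 2(\sqrt{Wm}+2\sqrt n)\bigr)$ rather than $\eta=\sqrt m/(2\sqrt 2 L)$ with the random $L$, since a data-dependent net would invalidate the fixed-point $\chi^2$ bound and the union bound; this is exactly the choice made in the paper and your final tally already uses it implicitly.
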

The proof of Lemma~\ref{lemma noise interpol group Lasso} is provided in Appendix~\ref{supp group Lasso}. \\
    \item[] \textbf{Step 5:} We finally turn to the subdifferential condition. Let $h \in \mathbb R^p$ such that $\| h \|_{\textnormal{GL}} = 1$ and $\| h \|_2 \leq r^*(\gamma)$ and $I =  \{ i \in [M]: \| h^*_{G_i}\|_2 \neq 0 \}$.  Let us define $g \in \mathbb R^p$ as
    $$
    g_{G_i} = \left\{
    \begin{array}{ll}
        \frac{h^*_{G_i}}{\| h^*_{G_i} \|_2 } & \mbox{if } i \in I \\
        \frac{h_{G_i}}{\| h_{G_i} \|_2 } & \mbox{if } i \in I^c .
    \end{array}
    \right.
    $$
 We have that  $\| g_{G_i} \|_{2} =1$   for every $i=1,\cdots,M$, and that $\langle g,h^* \rangle = \| h^* \|_{\textnormal{GL}}$. Hence, we have that $g \in \partial (\| \cdot \|_{\textnormal{GL}})_{h^*}$ and it follows that
    \begin{align*}
        \langle h, g \rangle & = \sum_{i \in I} \langle h_{G_i}, g_{G_i} \rangle + \sum_{i \in \notin I}\langle h_{G_i}, g_{G_i} \rangle \geq \sum_{i \in \notin I} \| h_{G_i}\|_2 - \sum_{i \in I} \| h_{G_i}\|_2  \\
        & = \sum_{i=1}^M \| h_{G_i} \|_2 - 2 \sum_{i \in I} \| h_{G_i}\|_2.
    \end{align*}
    Since $\|h\|_{\textnormal{GL}} = \sum_{i=1}^M \| h_{G_i} \|_2 = 1$ and  
    $$
    \sum_{i \in I} \| h_{G_i}\|_2 \leq \sqrt{ | I | } \bigg( \sum_{i \in I}  \| h_{G_i}\|_2^2 \bigg)^{1/2} \leq \sqrt{ | I | } \| h \|_2 \leq \sqrt{ | I | } r^*(\gamma) ,
    $$
    we obtain that $\Delta(\gamma,h^*) \geq 1 - 2\sqrt{| I |} r^*(\gamma) \geq 1/2$ when 
    $| I| \max_{i=1,\cdots,M} | G_i | /n \leq \gamma^2/80$.  
\end{itemize}
Taking $\kappa = 1/\sqrt 3$ we get $\gamma = 1/(144\sqrt 3)$ and applying Theorem~\ref{Main general theorem} concludes the proof of Theorem~\ref{thm groupLasso}.

The proof of Theorem~\ref{Erm groupLasso} follows exactly the same steps. The only difference is the additional upper bound on $\bar \Delta(\gamma,h^*)$. Take
  $$
    g_{G_i} = \left\{
    \begin{array}{ll}
        \frac{h^*_{G_i}}{\| h^*_{G_i} \|_2 } & \mbox{if } i \in I \\
        0 & \mbox{if } i \in I^c .
    \end{array}
    \right.
    $$
 We have that  $\| g_{G_i} \|_{\textnormal{GL}}^* =1$   for every $i=1 \in I$, and that $\langle g,h^* \rangle = \| h^* \|_{\textnormal{GL}}$. Hence, we have that $g \in \partial (\| \cdot \|_{\textnormal{GL}})_{h^*}$. Let $h \in \mathbb H$ such that $\| h  \|_2 = r^*(\gamma)$ and $\| h\| \leq 1$. We have by applying Cauchy-Schwarz twice
    \begin{align*}
        \langle h, g \rangle & = \sum_{i \in I} \langle h_{G_i}, g_{G_i} \rangle \leq \sum_{i \in I} \|h_{G_i}\|_2 \leq \sqrt{|I| \sum_{i \in I} \| h_{G_i} \|_2^2 }\leq \sqrt{|I|} \|h\|_2 = \sqrt{|I|} r^*(\gamma) . 
    \end{align*}

\end{proof}
\subsection{Proofs of Theorem~\ref{corollary matrix} and Theorem~\ref{erm trace regression}}
\begin{proof}
The proof of Theorem~\ref{corollary matrix} is based on Theorem~\ref{Main general theorem} and Lemma~\ref{duality formulation}. We apply the general routine presented in Section~\ref{sec main}. 
\begin{itemize}
    \item[] \textbf{Step 1:} The norm defined on $\mathbb H = \mathbb R^{p_1\times p_2}$ is the nuclear norm $\|\cdot \| =  \| \cdot \|_{S_1}$.\\
    \item[] \textbf{Step 2:} Since $X_1$ is a Gaussian matrix with i.i.d. standard Gaussian entries, we have for any $h \in\mathbb R^{p_1 \times p_2}$ that $\langle h , X_1 \rangle \sim \mathcal N(0,\|h\|_2^2)$ and $\| \langle h , X_1 \rangle \|_{L_4(\mu)}^4 =  3\|h\|_2^4=\|\langle h, X_1 \rangle\|_{L_2(\mu)}^4 $. Hence, by Lemma~\ref{paley argument}, the small-ball assumption is verified for any $\kappa$ in $[0,1)$ and $\delta = (1-\kappa^2)/3$. \\
    \item[] \textbf{Step 3:} By symmetry of $X_i$, the random variable $\varepsilon_i X_i$ is distributed as $X_i$ for any $i \in [n]$ and for any $h \in \mathbb R^{p_1\times p_2}$, $n^{-1/2}\sum_{i=1}^n \varepsilon_i \langle X_i,h \rangle$ is distributed as $\langle h, G \rangle$, where $G$ is a random matrix with i.i.d. standard Gaussian entries. Since the dual norm of the nuclear norm is the operator norm, for any $r>0$, 
    $$
    \mathbb E \sup_{ \substack{\| h \|_{S_1} = 1 \\ \| h\|_2 \leq r}} \langle h , G \rangle \leq \mathbb E \| G \|_{\textnormal{op}} \leq 2 \sqrt {\max(p_1,p_2)} ,
    $$
    where we used Theorem 2.13 in~\cite{DavidsonSzarek01}. It follows that
    $$
    r^*(\gamma) \leq \frac{2}{\gamma} \sqrt \frac{\max(p_1,p_2)}{n} .
    $$
    \item[] \textbf{Step 4:} Since the dual norm of the nuclear norm is the operator norm, Lemma~\ref{duality formulation} yields
    $$
    \| \hat \nu \|_* \leq \frac{\| \xi \|_2}{ \inf_{\nu \in \mathcal S^{n-1}} \| \sum_{i=1}^n \nu_i X_i \|_{\textnormal{op}}} .
    $$
    The following Lemma establishes a lower bound on the infimum above. 
    \begin{lemma} \label{Lemma lower bound spec}
Consider sensing matrices $X_1, \dots, X_n \in \mathbb{R}^{p_1 \times p_2}$ with i.i.d. standard Gaussian entries, $X_{ikl} \overset{i.i.d.}{\thicksim} \mathcal{N}(0,1)$. Suppose that $48n \log(32{n(p_1+p_2)}) \leq p_1p_2$. Then, we have with probability at least $1-2ne^{-p_1p_2/32}$ that 
\begin{align}
    \inf_{v \in \mathcal{S}^{n-1}} \left \| \sum_{i=1}^n v_i X_i \right \|_{op} \geq \frac{ \sqrt{\max(p_1, p_2)}}{2}.
\end{align}
\end{lemma}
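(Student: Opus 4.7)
The plan is to turn the uniform lower bound into a statement about the smallest singular value of a single rectangular Gaussian matrix, and then translate Frobenius-norm information into operator-norm information via a standard rank bound.

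First I would define $A\in \mathbb{R}^{n\times p_1 p_2}$ by placing $\mathrm{vec}(X_i)^{T}$ in the $i$-th row of $A$. Since the entries $X_{ikl}$ are i.i.d.\ $\mathcal{N}(0,1)$, the matrix $A$ has i.i.d.\ standard Gaussian entries. For every $v\in\mathcal{S}^{n-1}$ one has $\mathrm{vec}\bigl(\sum_{i=1}^n v_i X_i\bigr)=A^{T}v$, and therefore
\begin{equation*}
\inf_{v\in\mathcal{S}^{n-1}}\Bigl\|\sum_{i=1}^n v_i X_i\Bigr\|_F \;=\; \inf_{v\in\mathcal{S}^{n-1}}\|A^{T}v\|_2 \;=\; \sigma_{\min}(A).
\end{equation*}

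Second, I would apply the Davidson--Szarek bound (Theorem 2.13 in \cite{DavidsonSzarek01}) to the $n\times (p_1p_2)$ Gaussian matrix $A$. The hypothesis $48 n\log\bigl(32n(p_1+p_2)\bigr)\leq p_1 p_2$ in particular forces $\sqrt{n}\leq \sqrt{p_1p_2}/12$, so choosing the deviation parameter $t=\sqrt{p_1p_2}/4$ yields
\begin{equation*}
\sigma_{\min}(A)\;\geq\;\sqrt{p_1p_2}-\sqrt{n}-t\;\geq\;\frac{\sqrt{p_1p_2}}{2}
\end{equation*}
on an event of probability at least $1-\exp(-p_1p_2/32)$, which is at least as strong as the bound stated in the lemma.

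Third, I would use the elementary inequality $\|M\|_F^2=\sum_i \sigma_i(M)^2\leq \mathrm{rank}(M)\,\|M\|_{op}^2$, so that $\|M\|_{op}\geq \|M\|_F/\sqrt{\min(p_1,p_2)}$ for every $M\in\mathbb{R}^{p_1\times p_2}$. Combining this with the Frobenius-norm bound above yields
\begin{equation*}
\inf_{v\in\mathcal{S}^{n-1}}\Bigl\|\sum_{i=1}^n v_i X_i\Bigr\|_{op} \;\geq\; \frac{\sqrt{p_1p_2}/2}{\sqrt{\min(p_1,p_2)}} \;=\; \frac{\sqrt{\max(p_1,p_2)}}{2}
\end{equation*}
on the same event, which is the claim. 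There is no truly hard step; the conceptual point worth highlighting is that the scaling gap between the Frobenius and operator norms, namely $\sqrt{\min(p_1,p_2)}$, is exactly what converts $\sqrt{p_1p_2}$ into $\sqrt{\max(p_1,p_2)}$ and produces the correct dimensional dependence. Everything else reduces to rewriting the objective as the smallest singular value of a rectangular Gaussian matrix and a single application of a well-known concentration inequality.
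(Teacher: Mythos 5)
Your proof is correct, and it takes a genuinely different and in fact shorter route than the paper. The paper proceeds by an $\epsilon$-net over $\mathcal{S}^{n-1}$: for each fixed $v$ it lower bounds $\|\sum_i v_i X_i\|_{op}$ by the maximal column norm $\max_j\|Ge_j\|_2$ using $\chi^2$ lower-tail bounds, controls the net approximation error through $\max_i\|X_i\|_{op}$ (Davidson--Szarek plus Borell), and finishes with a union bound over the net --- this is exactly where the $\log\bigl(n(p_1+p_2)\bigr)$ factor in the hypothesis and the $2n$ in the failure probability come from. You instead vectorize, observe that $\inf_{v\in\mathcal{S}^{n-1}}\|\sum_i v_iX_i\|_F$ is the smallest singular value of a single $p_1p_2\times n$ Gaussian matrix, invoke the Davidson--Szarek/Gordon bound once (uniformity in $v$ is then automatic, no net or union bound needed), and pass from Frobenius to operator norm via $\|M\|_{op}\geq\|M\|_F/\sqrt{\operatorname{rank}(M)}\geq\|M\|_F/\sqrt{\min(p_1,p_2)}$, which converts $\sqrt{p_1p_2}/2$ into exactly $\sqrt{\max(p_1,p_2)}/2$. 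Your numerics check out: the hypothesis gives $\sqrt{n}\leq\sqrt{p_1p_2}/12$, so with $t=\sqrt{p_1p_2}/4$ you get $\sigma_{\min}\geq\tfrac{2}{3}\sqrt{p_1p_2}\geq\sqrt{p_1p_2}/2$ with failure probability $e^{-p_1p_2/32}\leq 2ne^{-p_1p_2/32}$, so your conclusion is at least as strong as the lemma's (indeed your argument does not even need the logarithmic factor in the assumption, only $n\lesssim p_1p_2$). What the paper's net argument buys in exchange is that it works directly on the operator norm, which is the more robust template when per-point control of the full Frobenius norm is unavailable or when one wants to swap the $\chi^2$ tails for Bernstein-type bounds under sub-Gaussian entries, as the authors remark; but for the Gaussian case and the stated constant, your reduction is cleaner and loses nothing, since the Frobenius-to-operator gap $\sqrt{\min(p_1,p_2)}$ is precisely the conversion needed here.
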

The proof of Lemma~\ref{Lemma lower bound spec} is provided in Appendix \ref{supp matrix}.  \\
    \item[] \textbf{Step 5:} We finally turn to the subdifferential condition.  Let $h \in \mathbb R^{p_1 \times p_2}$ such that $\| h \|_{op} = 1$ and $\| h \|_2 \leq r^*(\gamma)$. By Lemma 4.4 in~\cite{lecue2018regularization}, there exists $g \in \partial (\| \cdot \|_{S_1})_{h^*}$ such that $\langle h,g \rangle \geq 1- 4 \sqrt s r^*(\gamma) \geq 1/2$ if  $s \max(p_1,p_2)/n \leq 256\gamma^2$. Hence for $s \max(p_1,p_2)/n \leq 256\gamma^2$, we obtain that $\Delta(\gamma, h^*) \geq 1/2$. 
\end{itemize}
Setting $\kappa = 1/\sqrt 3$, we obtain that $\gamma = 1/(144\sqrt 3)$ and applying Theorem~\ref{Main general theorem} concludes the proof of Theorem~\ref{corollary matrix}.\\

The proof of Theorem~\ref{erm trace regression} follows exactly the same steps. The only difference is the upper bound on $\bar \Delta(\gamma,h^*)$. Since $\text{rank}(h^*) \leq s$, we have $h^* = U \Lambda V^T$, where $\Lambda = \text{diag}(\sigma_1, \cdots, \sigma_s)$, $\sigma_1 \geq \sigma_2 \geq \cdots \geq \sigma_s$ are the singular values of $h^*$ ordered in the non-increasing order and $U \in \mathbb{R}^{p_1 \times s}$ and $V \in \mathbb{R}^{p_2 \times s}$ contain the corresponding left and right singular vectors, respectively. Take $g = U V^T$. We have $\langle g, h^* \rangle = \| h^*\|_{S_1}$ and $\|g\|_{op}=1$ and hence $g \in \partial \left( \|\cdot \| \right)_{h^*}$. 
 For $h \in \mathbb H$ such that $\| h \|_2 = r^*(\gamma)$ and $\| h \|_{S_1} \leq 1$ we have
\begin{align*}
    \langle g , h \rangle = \langle g , U U^T h VV^T \rangle
\end{align*}
by choice of $g$. Observe that $\textnormal{rank}(UU^T h VV^T)\leq \textnormal{rank}(UU^T) = s$. Hence, we obtain that 
$$
 \langle g , h \rangle \leq \|    U U^T h VV^T \|_{S_1} \leq \sqrt{s} \| UU^T h VV^T \|_{2} \leq \sqrt{s} \| h \|_{2}= \sqrt s r^*(\gamma).
$$
\end{proof}

\section*{Acknowledgements}
ML and GC have been funded in part by ETH Foundations of Data Science (ETH-FDS) and gratefully acknowledge helpful discussions with Pedro Teixeira and Afonso Bandeira. 

\appendix

\section{Proof of Lemma~\ref{lemma_interpolation_noise}} \label{lemma supp bp}
\begin{proof}

We denote $\mathbb{X}=(X_1, \dots, X_n)$ and observe that $\sum v_i X_i=\mathbb{X}v=\Sigma^{1/2} \mathbb{Z}v$, where $\mathbb{Z}=(Z_1, \dots, Z_n)$ is a vector consisting of i.i.d. zero mean Gaussian random vectors with covariance matrix $I_p$. By the dual characterization of the $\ell_1$-norm and H\"older's inequality, we have that 
\begin{align*}
    \|\mathbb{Z} v \|_\infty = \sup_{\|b\|_1=1} |\langle b,  \Sigma^{-1/2} \mathbb{X} v \rangle | \leq \|\mathbb{X}v\|_\infty  \sup_{\|b\|_1=1}\| \Sigma^{-1/2} b \|_1.
\end{align*}
Rearranging yields that it suffices to find a uniform lower bound for $\|\mathbb{Z}v\|_\infty$. 
    Let $N_\epsilon$ be an $\epsilon$-net for $\mathcal{S}^{n-1}$. By Lemma 5.2 in \cite{Vershynin12} we have for $\epsilon \leq 1$ that
\begin{align}
    |N_\epsilon| \leq \left ( \frac{3}{\epsilon}\right )^n.
\end{align} \label{bound net card}
 
By definition of $N_\epsilon$, for any $v \in \mathcal{S}^{n-1}$ there exists some $v_{\epsilon} \in N_\epsilon$ such that $\| v_{\epsilon} - v \|_2 \leq \epsilon$. It follows that
$$
\| \mathbb{Z} v \|_\infty \geq  \| \mathbb{Z} v_{\epsilon} \|_\infty - \| \mathbb{Z} (v-v_{\epsilon}) \|_\infty \geq \| \mathbb{Z} v_{\epsilon} \|_\infty - \epsilon \max_{j \in [p]} \| \mathbb{Z}_{\cdot j} \|_{2}.
$$
By a lower tail bound for Gaussians (e.g. Exercise 2.2.8 and 2.23 on p.37 in \cite{GineNickl16}), for a fixed $v_{\epsilon} \in S^{n-1}$ we have that
\begin{align*}
   \mathbb{P} \left (  \| \mathbb{Z} v_{\epsilon} \|_{\infty} \leq t\right ) \leq \bigg( 1-\sqrt{\frac{2}{\pi}}\frac{ e^{-t^2/2}}{t+1} \bigg)^p.
\end{align*}
Choosing $t = \sqrt{\alpha \log(p /n) }$ we obtain that 
\begin{align*}
     \mathbb{P} \left (  \| \mathbb{Z} v_{\epsilon} \|_{\infty} \leq  \sqrt{\alpha \log(p /n) } \right ) & \leq \exp \left ( - \sqrt \frac{2}{\pi}\frac{p ( p/n)^{-\alpha/2} }{\sqrt{\alpha \log( p/n) }+1} \right) \\
     & \leq \exp \left (  - \frac{ n^{\alpha} p^{1-\alpha}}{\sqrt{2\pi} } \right) ,
\end{align*}
where we used the fact that $p\geq e^{1/\alpha} n$ and the inequality $\log(x) \leq x-1$,  $x >0$. By Jensen's inequality we have that $\mathbb{E} \| \mathbb{Z}_{\cdot j } \|_2 \leq \sqrt{n}$ and furthermore, by Borell's inequality  (e.g. Theorem 2.2.7 in \cite{GineNickl16}) and a union bound we have that with probability at least $1-pe^{-n/2}$
\begin{align}
   \max_{j \in [p]} \| \mathbb{Z}_{\cdot j} \|_2 \leq 2\sqrt n \label{bound col X}.
\end{align}
Taking a union bound over $N_\epsilon$, we obtain that with probability at least
$$
1 - \exp \left (  n \log(3/\epsilon)  -  \frac{ n^{\alpha} p^{1-\alpha}}{\sqrt{2\pi} }   \right ) -p \exp(-n/2) ,
$$
the following holds 
$$
\inf_{v \in \mathcal{S}^{n-1}} \| \mathbb{Z} v \|_\infty \geq \sqrt{\alpha \log( p /n) } - 2\epsilon \sqrt n . 
$$
Finally, choosing $\epsilon = \sqrt \alpha /(2 \sqrt{2n})$ and using the fact that $p$ is large enough concludes the proof. 
\end{proof}
\section{Proof of Lemma~\ref{lemma noise interpol group Lasso}} \label{supp group Lasso}
\begin{proof}
As before, we denote $\mathbb{X}=(X_1, \dots, X_n)$ and observe that $\sum v_i X_i = \mathbb{X}v$. We denote $\mathbb{X}_{G_i}:=(X_{kj})_{k \in [n], j \in G_i}$. 
Let $N_\epsilon$ be an $\epsilon$-net for $\mathcal{S}^{n-1}$. By Lemma 5.2 in \cite{Vershynin12} we have again for $\epsilon \leq 1$ that
$   |N_\epsilon| \leq \left ( 3/\epsilon \right)^n.$
By definition of $N_\epsilon$, for any $v \in \mathcal{S}^{n-1}$ there exists some $v_{\epsilon} \in N_\epsilon$ such that $\| v_{\epsilon} - v \|_2 \leq \epsilon$. It follows that
\begin{align*}
    \max_{i \in [M]} \| \mathbb{X}_{G_i} v \|_2 & \geq  \max_{i \in [M]} \| \mathbb{X}_{G_i} v_{\epsilon} \|_2  - \max_{i \in [M]} \| \mathbb{X}_{G_i} (v-v_{\epsilon}) \|_2 \\
    & \geq \max_{i \in [M]} \| \mathbb{X}_{G_i} v_{\epsilon} \|_2 - \epsilon \max_{i \in [M]} \| \mathbb X_{ G_i} \|_{\textnormal{op}}.
\end{align*}
The expression $\| \mathbb X_{G_i} v_{\epsilon} \|_2^2$ is $\chi^2_{|G_i|}$ distributed with $|G_i|$ degrees of freedom. Hence, by lower tail bounds for $\chi^2$-variables (e.g. Lemma 1 in \cite{LaurentMassart00}), for  fixed $v_{\epsilon} \in S^{n-1}$ we have that
\begin{align*}
  &  \mathbb{P} \left ( \max_{i \in [M]} \left [\| \mathbb X_{G_i} v_{\epsilon} \|_2^2 - |G_i|+2\sqrt{|G_i| t_i} \right ] \leq 0 \right ) \\ =&    \prod_{i \in [M] }  \mathbb{P} \left (  \| \mathbb X_{G_i} v_{\epsilon} \|_2^2 \leq  |G_i|-2\sqrt{|G_i| t_i} \leq 0 \right ) \leq e^{-\sum_{i \in [M]} t_i }.
\end{align*}
Choosing $t_i=|G_i|/16$, $i \in [M]$, we obtain that 
\begin{align*}
   \mathbb{P} \left ( \max_{i \in [M]} \| \mathbb X_{G_i} v_{\epsilon} \|_2^2 \leq \min_{i \in [M]} |G_i|/\sqrt{2} \right ) \leq \exp \left ( \sum_i |G_i|/16\right ) = e^{-p/16}.
\end{align*}
The matrix $\mathbb X_{G_i}$ is a matrix with i.i.d. Gaussian entries and of size $ |G_i| \times n $. Hence, by Theorem 5.35 in~\cite{Vershynin12}, with probability at least $1-\exp(-t/2)$
$$
\| \mathbb X_{ G_i} \|_{\textnormal{op}} \leq \sqrt n + \sqrt{| G_i |} + \sqrt t 
$$
By using an union bound and choosing $t = n$, we obtain that with probability at least $1-e^{-n/2 + \log(M)}$
$$
\| \mathbb X_{ G_i} \|_{\textnormal{op}} \leq 2\sqrt n + \sqrt{\max_{i \in [M]} | G_i |} .
$$
Taking an union bound over $N_\epsilon$, we obtain that with probability at least
$$
1 - e^{n\log(3/\epsilon)-p/16} - e^{-n/2 + \log(M)} ,
$$
the following holds 
$$
\inf_{v \in \mathcal{S}^{n-1}}  \max_{i \in [M]} \| \mathbb X_{G_i} v \|_2 \geq \sqrt{\min_{i \in [M]} |G_i|/2} - \epsilon \bigg(  2\sqrt n + \sqrt{\max_{i \in a [M]} | G_i|} \bigg). 
$$
Finally, choosing $\epsilon = \sqrt{\min_{i \in [M]} |G_i|}/(2\sqrt 2( 2\sqrt n + \sqrt{\max_{i \in [M]} |G_i|}))$ and using the fact that
$$
p \geq 32 \log \bigg(6\sqrt 2 ( 2\sqrt n + \sqrt W )\bigg) ,$$
where $W = \max_{i \in [M] } |G_i| / \min_{i \in [M]} |G_i|$ concludes the proof. 
\end{proof}
\section{Proof of Lemma~\ref{Lemma lower bound spec}} \label{supp matrix}
\begin{proof}
Without loss of generality, we assume that $p_1 \geq p_2$. For fixed $v \in \mathcal{S}^{n-1}$ we have that $\sum_{i=1}^n v_i X_i \overset{d}{=} G$ for some matrix $G \in \mathbb{R}^{p_1 \times p_2}$ consisting of i.i.d. standard Gaussian entries. Since $\| \cdot \|_{op} \geq \max_{j \in [p_2]} \| \cdot e_j\|_2$ and the columns of $G$ are independent and identically distributed, we have that
\begin{align}
    \mathbb{P} \left ( \|G\|_{op}^2 \leq t\right ) & \leq \mathbb{P} \left ( \max_{j \in [p_2]} \|Ge_j\|_2^2 \leq t \right )  =\left ( \mathbb{P} \left ( \|Ge_1\|_2^2 \leq t \right ) \right )^{p_2}.
\end{align}
It is thus left to find a sufficient lower tail bound for $\|Ge_1\|_2^2$. $\|Ge_1\|_2^2$ is $\chi_{p_1}^2$-distributed. Hence, we have  that $\mathbb{E} \|Ge_1\|_2^2 = p_1$. Moreover, by lower tail bounds for $\chi_{p_1}^2$ variables, e.g. Lemma 1 in \cite{LaurentMassart00}, we have that
\begin{align}
\mathbb{P} \left ( \|Ge_j\|_2^2 \leq p_1-2\sqrt{p_1t} \right ) \leq e^{-t}.
\end{align}
Hence, choosing $t=\frac{p_1}{16}$, we obtain that
\begin{align}
    \mathbb{P} \left ( \|G\|_{op} \leq \sqrt{\frac{{p_1}}{2}}\right ) \leq e^{-p_1p_2/16}.
\end{align}
Now let $N_\epsilon$ be an $\epsilon$-net of $\mathcal{S}^{n-1}$. We recall that by Lemma 5.2. in \cite{Vershynin12} $|N_\epsilon|\leq (3/\epsilon)^n$ for $\epsilon \leq 1$. By definition of $N_\epsilon$, for every $v \in \mathcal{S}^{n-1}$ there exists  $v_\epsilon \in N_\epsilon$ such that
\begin{align}
   \left \| \sum v_i X_i \right \|_{op} & \geq \left \| \sum (v_{\epsilon})_i X_i \right \|_{op} - \|v-v_{\epsilon}\|_1 \max_{i=1, \dots, n} \|X_i\|_{op} \notag \\ 
   & \geq \left \| \sum (v_{\epsilon})_i X_i \right \|_{op} - \sqrt{n}\epsilon\max_{i=1, \dots, n} \|X_i\|_{op}.
\end{align}
We have by Theorem 2.13 in \cite{DavidsonSzarek01} that $\mathbb{E}\|X_i\|_{op}\leq \sqrt{p_1}+\sqrt{p_2}\leq 2 \sqrt{\max(p_1,p_2)}$. Moreover, by Borell's inequality (Theorem 2.2.7 in \cite{GineNickl16}) and a union bound, we have with probability at least $1-ne^{-p_1p_2}$ that
\begin{align}
    \max_{i \in [n]} \|X_i\|_{op} \leq 2 \sqrt{\max(p_1,p_2)} + 2\sqrt{p_1p_2} \leq 4  \sqrt{p_1p_2}.
\end{align}
Hence, choosing $\epsilon=\frac{1}{10\sqrt{np_2}}$ and using an union bound we obtain that 
\begin{align}
   \mathbb{P} \left (  \inf_{v \in \mathcal{S}^{n-1}} \left \| \sum_{i=1}^n v_i X_i\right \| < \frac{\sqrt{p_1}}{2} \right ) & \leq ne^{-p_1p_2}+|N_\epsilon| \mathbb{P} \left ( \|G\|_{op} \notag  \leq \sqrt{\frac{{p_1}}{2}}\right ) \\ & \leq 2 ne^{-p_1 p_2 /32}, \notag 
\end{align}
provided that $3n \log(16\sqrt{np_2}) \leq p_1p_2/32$.
\end{proof}
\section{Proof of Proposition \ref{Prop lower bd gen}} \label{appendix proof lb}
\begin{proof}
By assumption there exists some $h_1 \in L$ such that $\|\langle h_1, X \rangle \|_{L_2(\mu)}=\epsilon^2/8$. 
We define two distributions: Under $\mathbb{P}_0$ data is generated as
$$Y_i=\langle X_i, h_1 \rangle,~~~~i=1, \dots, n,$$
whereas under $\mathbb{P}_1$
$$Y_i=\langle X_i, h_0\rangle+\xi_i, ~~~~h_0=0, ~\xi_i=\langle X_i, h_1 \rangle, ~~~~i=1, \dots, n.$$
We see that $\mathbb{P}_0=\mathbb{P}_1$. Moreover, $\|\langle X,h_0-h_1\rangle\|_{L_2(\mu)}^2=\|\langle h_1, X \rangle \|_{L_2(\mu)}^2=\epsilon^2/8$. 
Since the $X_i$ are i.i.d. we have that 
\begin{align*}
    \mathbb{E}\|\xi\|^2_2 =n \mathbb{E} |\langle X_1, h_1 \rangle|^2 = n \|\langle X, h_1 \rangle \|_{L_2(\mu)}^2=n\epsilon^2/8. 
\end{align*}
Hence, by Markov's inequality, with probability at least $7/8$
\begin{align*}
    \|\xi\|^2_2 \leq n \epsilon^2. 
\end{align*}
Finally, accounting for the event where $\|\xi\|^2_2 > n \epsilon^2$ and afterwards essentially applying Le Cam's two point Lemma we obtain
\begin{align*}
   &   \inf_{\tilde h} \sup_{h^* \in L,  ~\xi: \|\xi\|_2^2\leq n \epsilon^2 } \mathbb{P}_{h^*, \xi} \left(   \|\langle \tilde h-h^*, X \rangle\|_{L_2(\mu)}^2  \geq  \frac{ \epsilon^2}{16} \right)\\   \geq  &  \inf_{\tilde h} \max_{k \in \{0,1\}}   \mathbb{P}_k \left ( \|\langle \tilde h-h_k, X \rangle \|_{L_2(\mu)}^2 \geq \frac{\epsilon^2}{16} \right )-\frac{1}{8} \\
       = & \inf_{\tilde h} \max \left ( \mathbb{P}_0 \left ( \|\langle\tilde h, X \rangle\|_{L_2(\mu)}^2 \geq \frac{\epsilon^2}{16} \right ), \mathbb{P}_0 \left ( \|\langle \tilde h-h_1, X \rangle\|_{L_2(\mu)}^2  \geq \frac{\epsilon^2}{16} \right )\right ) -\frac{1}{8}  \geq \frac{3}{8} ,
\end{align*}
where the last inequality follows by noting that $$\mathbb{P}_0 \left ( \{ \|\langle \tilde h, X \rangle\|_{L_2(\mu)}^2 \geq \frac{\epsilon^2}{16} \} \cup  \{ \|\langle \tilde h-h_1, X \rangle\|_{L_2(\mu)}^2 \geq \frac{\epsilon^2}{16} \} \right ) =1$$ as $\|\langle h_1, X \rangle \|_{L_2(\mu)}^2\geq \epsilon^2/8$.

\end{proof}

\bibliography{thesisBibliography}
\bibliographystyle{alpha}

\end{document}